\newcommand{\R}{\mathbb R}
\newcommand{\T}{\mathbb T}
\newcommand{\Sp}{\mathbf S}
\newcommand{\Z}{\mathbb Z}
\renewcommand{\P}{\mathbf P}
\renewcommand{\S}{\mathbb S}
\newcommand{\FF}{\mathcal F}
\newcommand{\la}{\langle}
\newcommand{\ra}{\rangle}
\renewcommand{\d}{\mathrm{d}}
\newcommand{\eps}{\varepsilon}
\newcommand{\Nt}{|\hskip-0.04cm|\hskip-0.04cm|}
\DeclareMathOperator{\re}{Re}
\DeclareMathOperator{\Ker}{Ker}
\numberwithin{equation}{section}
\setlist[enumerate]{label=\textnormal{(\arabic*)},itemsep=5pt,topsep=4pt,leftmargin=*}
\theoremstyle{plain}
\newtheorem{theo}{Theorem}
\newtheorem{prop}{Proposition}[section]
\newtheorem{lem}[prop]{Lemma}
\newtheorem{cor}[prop]{Corollary}
\theoremstyle{remark}
\newtheorem{rem}{Remark}
\theoremstyle{definition}
\def\le{\leqslant}
\def\ge{\geqslant}
\def\leq{\leqslant}
\def\geq{\geqslant}
\title[The Navier-Stokes limit of kinetic equations for low regularity data]{The Navier-Stokes limit of kinetic equations \\
for low regularity data}
\author[K. Carrapatoso]{Kleber Carrapatoso}
\address[K.~Carrapatoso]{Centre de Math\'ematiques Laurent Schwartz, \'Ecole polytechnique, Institut Polytechnique de Paris, 91128 Palaiseau cedex, France}
\email{kleber.carrapatoso@polytechnique.edu}
\author[I. Gallagher]{Isabelle Gallagher}
\address[I. Gallagher]{DMA, \'Ecole normale sup\'erieure, CNRS, PSL University, 75005 Paris, France and UFR de math\'ematiques, Universit\'e Paris Cit\'e, 75013 Paris, France} 
\email{isabelle.gallagher@ens.fr}
\author[I. Tristani]{Isabelle Tristani}
\address[I. Tristani]{Université Côte d’Azur, CNRS, LJAD, Parc Valrose, F-06108 Nice, France} 
\email{isabelle.tristani@univ-cotedazur.fr}
\date{\today}
\begin{document}

\begin{abstract}
In this paper, we investigate the link between kinetic equations (including Boltzmann with or without cutoff assumption and Landau equations) and the incompressible Navier-Stokes equation. We work with strong solutions and we treat all the cases in a unified framework. The main purpose of this work is to be as accurate as possible in terms of functional spaces. More precisely, it is well-known that the Navier-Stokes equation can be solved in a lower regularity setting (in the space variable) than kinetic equations. Our main result allows to get a rigorous link between solutions to the Navier-Stokes equation with such low regularity data and kinetic equations. 
\end{abstract}

\maketitle

\tableofcontents

\section{Introduction}
In this paper, we are interested in a problem in the theory of  hydrodynamical limits: our goal is to obtain a rigorous result of convergence of solutions to various kinetic equations towards solutions to the incompressible Navier-Stokes equation. This problem can be seen as a part of the program initiated by the 6th problem of Hilbert in 1900 at the International Congress of Mathematicians. Indeed, the question is to understand the link between microscopic and macroscopic descriptions of a fluid, and deriving macroscopic equations from mesoscopic ones can be seen as an intermediate step of this program. We refer for instance to the book by Saint-Raymond~\cite{SRbook} for a detailed presentation of the subject and for mathematical results in the field. More specifically, in this paper, we seek to get a result on the convergence  of sequences of strong solutions  to the rescaled mesoscopic equations in which the connection between the kinetic and the fluid equations is as accurate as possible in terms of functional spaces.

\subsection{Kinetic equations}

At the kinetic level, we shall consider Boltzmann or Landau type equations for not too soft potentials. We denote by $F=F(t,x,v)$ the density of particles, which depends on time~$t \in \R^+$, position $x \in \T^3$ (the unit periodic box) and velocity $v \in \R^3$. The dimensionless version of our kinetic equation reads
	\[
	{\rm St} \, \partial_t F + v \cdot \nabla_x F = \frac1{\rm Kn} Q(F,F)\,,
	\]
where the Strouhal number ${\rm St}$ and  the Knudsen number ${\rm Kn}$ are dimensionless parameters which are natural in   kinetic problems.   Here and below, $Q$ can be the Boltzmann (with or without cutoff) collision operator or the Landau collision operator. 
The Boltzmann collision operator is an integral operator defined as
	\begin{equation} \label{def:Bolop}
	Q_B(f_1,f_2):=\int_{\R^3 \times \Sp^2} B(v-v_*,\sigma) \left((f_1)'_*(f_2)' - (f_1)_* f_2 \right) \,\d \sigma \,\d v_*\,.
	\end{equation}
Here and below, we are using the shorthand notations $f_2=f_2(v)$, $(f_1)_*=f_1(v_*)$, as well as~$(f_2)'=f_2(v')$ and~$(f_1)'_*=f_1(v'_*)$.
In this expression, $v$, $v_*$ and $v'$, $v'_*$ are the velocities of a pair of particles after and before collision.
We make a choice of parametrization of the set of solutions to the conservation of momentum and energy (physical laws of elastic collisions):
	\begin{equation} \label{eq:elasticlaws}
	\begin{aligned}
	v+v_*&=v'+v'_* \\
	|v|^2+|v_*|^2&=|v'|^2+ |v'_*|^2
	\end{aligned}
	\end{equation}
so that the pre-collisional velocities are given by
	\[
	v'=\frac{v+v_*}{2} + \frac{|v-v_*|}{2} \sigma\,, \quad v'_*=\frac{v+v_*}{2}- \frac{|v-v_*|}{2} \sigma\,, \qquad \sigma \in \S^{2}\,.
	\]
The Boltzmann collision kernel $B=B(v-v_*,\sigma)$ only depends on the relative velocity~$|v-v_*|$ and on the deviation angle $\vartheta$ through $\cos \vartheta = \la v-v_*, \sigma \ra / |v-v_*|$
where $\la \cdot, \cdot \ra$ is the usual scalar product in $\R^3$.
The form of the collision kernel depends on the type of collisions that occur between particles. In dimension~$3$  in the case where particles behave   as billiard balls, known as the hard-spheres case, the collision kernel is proportional to the norm of the relative velocity, namely
	\[
	B(v-v_*,\sigma) = C |v-v_*|\,, \qquad C>0\,.
	\]
When particles interact through inverse power law potentials of type 
	\begin{equation} \label{eq:potential}
	\phi(r)=r^{-(p-1)} \qquad \text{with} \qquad p \in (2,+\infty)\,,
	\end{equation}
the collision kernel cannot be computed explicitly but Maxwell~\cite{Maxwell} has shown that the collision kernel can be computed in terms of the interaction potential~$\phi$. More precisely, in dimension~$3$, 
the kernel~$B$ satisfies the following properties.
	\begin{itemize}[leftmargin=*]
	\item[--] It takes product form in its arguments as
		\begin{equation} \label{eq:product}
		{B(v-v_*,\sigma) = |v-v_*|^\gamma \, b(\cos \vartheta)\,.}
		\end{equation}
		\item[--] The angular function $b$ is locally smooth, and has a nonintegrable singularity for $\vartheta \rightarrow 0$:
	it satisfies for some $c_b>0$ and any $\vartheta \in (0,\pi/2]$, 
		\begin{equation} \label{eq:angularsing}
		\frac{c_b}{\vartheta^{1+2s}} \leq \sin \vartheta \, b(\cos \vartheta) \leq \frac{1}{c_b \,  \vartheta^{1+2s}}
		\qquad \text{with} \qquad s:=\frac{1}{p-1} \in (0,1)\,.
		\end{equation}
	\item[--] The parameter $\gamma$ is defined as
		\begin{equation} \label{eq:Phi}
		\gamma := \frac{p-5}{p-1}  \in (-3,1)\,.
		\end{equation}
	\end{itemize}
One traditionally calls {hard potentials} the case $p>5$ (for which $0<\gamma<1$), {Maxwell molecules} the case~$p=5$ (for which $\gamma=0$), {moderately soft potentials} the case corresponding with~$3\leq p<5$ (for which $-2s \leq \gamma <0$) and {very soft potentials} the case $2<p<3$ (for which $-3 < \gamma <-2s$). In this paper, we shall not consider the very soft potentials case, meaning we shall restrict to~$\gamma \geq -2s$ (see Remark~\ref{restrictions remark} for a discussion on this restriction). 

  Grad's cut-off assumption consists in additionally supposing that the angular kernel~$b$ is integrable on the sphere by removing its singularity for small deviation angles~$\vartheta$ (see~\eqref{eq:angularsing}). In that case, the Boltzmann collision operator is thus of the form~\eqref{def:Bolop} with 
	\[
	B(v-v_*,\sigma) = b(\cos \vartheta) |v-v_*|^\gamma
	\quad \text{with}  \quad \int_{\Sp^2} b(\cos \vartheta) \, \d\sigma < \infty 
	\quad \text{and}  \quad \gamma \in (-3,1]\,.
	\] 
Notice that this in particular includes the case of hard-sphere collisions by taking the angular kernel to be constant. Here again, we do not consider   the very soft potentials case, that is we restrict ourselves to $\gamma\geq 0$. 

In the case of the Coulomb potential ($s = 1$ and thus $\gamma=-3$), the Boltzmann operator does not make any  sense (see~\cite{BookVillani} for example). The Boltzmann operator has then to be replaced by the Landau one which can be obtained in the so-called grazing collision limit after having made a cut-off on the Coulomb interaction.
The Landau operator, defined in~1936 by Landau~\cite{Landau} (independently of the Boltzmann operator), is used in plasma physics and is an integro-differential operator  given by
	\begin{equation}\label{def:Landauop}
		Q_L(f_1,f_2)(v) := \partial_{v_i}  \int_{\R^d} a_{ij}(v-v_*) \left( f_1(v_*) \partial_{v_j} f_2(v) - f_2(v) \partial_{v_j} f_1(v_*) \right)  \d v_*\,,
	\end{equation}
where we use the convention of summation of repeated indices. The matrix $a_{ij}$ is symmetric, semi-positive and is given by
	\begin{equation}\label{def:aij}
		a_{ij}(v) := |v|^{\gamma+2}\left( \delta_{ij} - \frac{v_i v_j}{|v|^2}\right)\,, \quad -3 \le \gamma \le 1\,.
	\end{equation}
Similarly to the Boltzmann equation, we have the following classification according to the values of~$\gamma$: interactions are referred to as {hard potentials} if~$\gamma\in(0,1]$, {Maxwellian molecules} if~$\gamma=0$, {moderately soft potentials} if~$\gamma \in [-2,0)$, {very soft potentials} if~$\gamma \in (-3,-2)$ and {Coulomb potential} if~$\gamma=-3$.
We mention that only the case $\gamma=-3$ is relevant from a physical viewpoint and is the one that has been derived by Landau in~\cite{Landau}. Once more, we shall only consider not too soft potentials, which correspond to $\gamma \geq -2$. 

In the three cases (Boltzmann with and without cut-off assumption and Landau), weak formulations of the collision operators allow to obtain the following conservation laws:
	\begin{equation} \label{prop:conserv1}
	\int_{\R^3} Q(f,f)(v) \, \varphi(v) \,\d v = 0 \qquad \text{for} \qquad \varphi(v) = \,1,  v\,,  |v|^2,
	\end{equation}
as well as Boltzmann's H-theorem that asserts that Boltzmann's entropy of solutions to these equations, namely~$\displaystyle \int f \log f\,\d x\d v$,  is non-increasing along time. Moreover, the second part of the theorem states that any distribution minimizing the entropy is a local Maxwellian
distribution in velocity.

\subsection{Hydrodynamic limit}

All kinetic models leading to incompressible models are based on a regime in which both the Strouhal and the Knudsen numbers are small. 
In order to reach the incompressible Navier-Stokes equation, we shall work with ${\rm St} = {\rm Kn} = \eps \ll 1$ (see for example~\cite{BGL1}). Our kinetic equation then reads
	\begin{equation} \label{eq:scaledBol}
	\left\{
	\begin{array}{ll}
		\partial_t F^\eps + \eps^{-1} v \cdot \nabla_x F^\eps 
		=\eps^{-2} Q(F^\eps,F^\eps)
		&\text{in} \quad \R^+ \times \T^3 \times \R^3 \\[0.1cm]
		F^\eps_{|t=0}=F^\eps_{\rm in} &\text{in} \quad \T^3 \times \R^3 \,. 
	\end{array}
	\right. 
	\end{equation}
The Knudsen number is actually proportional to the inverse of the average number of collisions for each particle per unit of time. Taking $\eps$ small has thus the effect of enhancing the role of collisions.
To relate our kinetic models to the incompressible Navier-Stokes equation, we then look at equation~\eqref{eq:scaledBol} under the following linearization of order $\eps$:
	\[
	F^\eps = \mu + \eps \mu^{\frac12} f^\eps\,,
	\]
where $\mu$ is the global Maxwellian defined by
	$$
	\mu(v):=\frac1{(2\pi)^\frac 32}e^{-\frac{|v|^2}2}\, .
	$$
The equation we are going to study on the fluctuation $f^\eps$ is thus the following:
	\begin{equation} \label{eq:feps_intro}
	\left\{
	\begin{array}{ll}
		\partial_t f^\eps +\eps^{-1} v \cdot \nabla_x f^\eps = \eps^{-2} L f^\eps + \eps^{-1} \Gamma(f^\eps , f^\eps)
		&\text{in} \quad \R^+ \times \T^3 \times \R^3 \\[0.1cm]
		f^\eps_{|t=0}= f_{\rm in}^\eps := \eps^{-1}(F^\eps_{\rm in}-\mu)\,\mu^{-\frac12} &\text{in} \quad \T^3 \times \R^3 
	\end{array}
	\right. 
	\end{equation}
with 
	$$
	\Gamma (f_1,f_2) :=\mu^{-\frac12} Q (\mu^{\frac12} f_1 , \mu^{\frac12} f_2) 
	$$
and
	\begin{equation} \label{def:L}
	L f := \Gamma(\mu^{\frac12} , f) + \Gamma (f,\mu^{\frac12}) \,.
	\end{equation}
We say that a distribution $f=f(x,v)$ has global mass, momentum and energy when it satisfies 
\begin{equation}\label{eq:normalization}
\int_{\T^3} \! \int_{\R^3} f(x,v) \,\varphi(v) \mu^{\frac12}(v)\,\d v \, \d x = 0 \quad \text{for} \quad \varphi(v)= 1 , v , |v|^2\,.
\end{equation}
Conservation laws~\eqref{prop:conserv1} imply that  the perturbation $f^\eps$ satisfies~\eqref{eq:normalization} for all times $t \geq 0$ if~$F_{\rm in}^\eps$ satisfies 
	$$
	\int_{\T^3} \! \int_{\R^3} F_{\rm in}^\eps (x,v) \,\varphi(v) \, \d v \, \d x = 
	\int_{\R^3} \mu(v) \,\varphi(v) \, \d v  \quad \text{for} \quad \varphi(v)= 1 , v , |v|^2\,.
	$$

For every $f=f(x,v)$ we write the decomposition
	$$
	f = \P^\perp_0 f + \P_0 f\,,\, \qquad \P^\perp_0 := \mbox{Id} - \P_0\,,
	$$
where $\P_0$ is the orthogonal projection onto 
	\begin{equation}\label{eq:Ker L}
	\Ker L =\left \{\mu^{\frac12}(v) , v_1 \mu^{\frac12}(v), v_2 \mu^{\frac12}(v) , v_3 \mu^{\frac12}(v), |v|^2 \mu^{\frac12}(v)\right\}
	\end{equation}
given by  the so-called hydrodynamic modes
	\begin{equation} \label{def:P0}
	\P_0 f (x,v) := \left\{ \rho[f](x)  + u[f](x) \cdot v  + \theta[f](x) \frac{|v|^2-3}{2} \right\} \mu^{\frac12}(v)
	\end{equation} 
where
	$$
	\begin{aligned}
	\rho[f](x) &:= \int_{\R^3} f(x,v) \mu^{\frac12}(v) \, \d v \,,\\
	u[f](x) &:= \int_{\R^3}  f(x,v) v \mu^{\frac12}(v) \, \d v \,,\\
	\theta[f](x) &:= \int_{\R^3} f(x,v) \frac{|v|^2-3}{3} \mu^{\frac12}(v) \, \d v \,.
	\end{aligned}
	$$
 Returning to~(\ref{eq:feps_intro}), it is expected that as~$\eps$ goes to zero, the solution~$f^\eps$ should converge to an element of~$\Ker  L$. This is actually proved in  many situations (see Paragraph~\ref{state of the art} below), and in particular the hydrodynamic modes of the limit satisfy the  incompressible Navier-Stokes Fourier system
	\begin{equation} \label{eq:NSF}
	\left\{
\begin{aligned}
\partial_t u + u \cdot \nabla u- \nu_{\rm \small NS}  \Delta u &= - \nabla p \\
\partial_t \theta  + u \cdot \nabla \theta- \nu_{\rm \small heat}  \Delta \theta &= 0 \\
\mbox{div}\, u & = 0 \\
 \nabla(\rho+\theta) &= 0 \, .
\end{aligned}
\right.
\tag{NSF}
\end{equation}
 To define the viscosity coefficients, we introduce the two unique functions~{$\bf \Phi$ (which is a matrix function) and $\bf \Psi$} (which is a vectorial function) in~$(\operatorname{Ker} L)^\perp$ such that 
$$
\mu^{-\frac12}L\big(\mu^{\frac12} {\bf \Phi}\big) =  {|v|^2\over 3 } {\rm{Id}} -v\otimes v 
\quad \text{and} \quad 
\mu^{-\frac12}L\big(\mu^{\frac12} {\bf \Psi}\big) =v\Big(\frac{5}{2}-{|v|^2\over 2}\Big) \, .
$$
The viscosity coefficients are  then defined by
$$ 
\begin{aligned}
\nu_{\rm \small NS}  :=\frac{1}{10}\int{\bf \Phi} : L\big(\mu^{\frac12}{\bf \Phi}\big) \mu^{\frac12} \, \d v \quad \text{and} \quad
\nu_{\rm \small heat}   :=\frac{2}{15} \int {\bf \Psi} \cdot L\big(\mu^{\frac12}{\bf \Psi}\big) \mu^{\frac12} \, \d v  \, .
\end{aligned}
$$

\subsection{Functional framework and notation}\label{sect:notation}
%
In order to treat the three cases (Boltzmann with and without cut-off assumption and Landau equations) in a unified framework, we introduce the space $H^{s,*}_v$ with $s \in [0,1]$ by: for $s = 0$ (corresponding to the Boltzmann operator with cutoff)  
	\begin{equation}\label{eq:def:H0*v}
	\| f \|_{H^{0,*}_v} = \| \la v \ra^{\frac\gamma2}f \|_{L^2_v } \,,
	\end{equation}
for $s \in (0,1)$ (corresponding to the Boltzmann operator without cutoff) 
	\begin{equation}\label{eq:def:Hs*v}
	\begin{aligned}
	\| f \|_{H^{s,*}_v }^2 
	&:=  \int_{\R^3}\int_{\R^3}\int_{\S^2} b(\cos \vartheta) |v-v_*|^\gamma \mu(v_*)[f(v') - f(v)]^2 \, \d \sigma \, \d v_* \, \d v \\
	&\quad 
	+  \int_{\R^3}\int_{\R^3}\int_{\S^2} b(\cos \vartheta) |v-v_*|^\gamma  f(v_*)^2 [\mu^{\frac12}(v') - \mu^{\frac12}(v) ]^2 \, \d \sigma \, \d v_* \, \d v \,,
	\end{aligned}
	\end{equation}
and finally for $s=1$ (corresponding to the Landau operator) we define
	\begin{equation}\label{eq:def:H1v*}
	\begin{aligned}
	\| f \|^2_{H^{1,*}_{v}} 
	&:= \big\| \langle v \rangle^{\frac{\gamma}{2}+1}  f \big\|_{L^{2}_{v}}^{2}
	+ \big\| \langle v \rangle^{\frac{\gamma}{2}} \operatorname{pr}_{v}\nabla_{v}f \big\|_{L^{2}_{v}}^{2}
	+ \big\| \langle v \rangle^{\frac{\gamma}{2}+1}(\operatorname{Id}-\operatorname{pr}_{v})\nabla_{v}f \big\|_{L^{2}_{v}}^{2}\,,
	\end{aligned}
	\end{equation}
where $\operatorname{pr}_v$ stands for the projection on $v$, namely
 $$\forall w \in \R^3\, , \quad \operatorname{pr}_v w = \left(w\cdot\frac{v}{\vert v \vert}\right)\frac{v}{\vert v \vert} \, \cdotp$$ 
For every $s \in [0,1]$, we also define the dual space $(H^{s,*}_v)'$ endowed with the norm
	$$
	\| \phi \|_{(H^{s,*}_v)'} := \sup_{\| f \|_{H^{s,*}_v} \le 1 } \la \phi , f \ra \,.
	$$
It is worth mentioning that for $s \in [0,1]$ there holds (see \cite{AMUXY2,GS,HTT} for the case $s \in (0,1)$, the other cases being immediate), 
	$$
	\big\| \la v \ra^{\frac\gamma2+s} f \big\|_{L^2_v}
	+ \big\| \la v \ra^{\frac\gamma2} f \big\|_{H^{s}_v} 
	\lesssim \| f \|_{H^{s,*}_v} 
	\lesssim \big\| \la v \ra^{\frac\gamma2+s} f\big\|_{H^{s}_v}\,.
	$$
We recall that if~$\ell >3/2$, then~$H^\ell_x \subset L^\infty_x$. For $m \geq 0$, $T>0$ and when $E_v$ is a Lebesgue or Sobolev space in velocity, we define the space~$ \widetilde L^\infty\big([0,T], H^{m}_x E_v\big) $  (with the notation introduced in~\cite{cheminlerner}) through its norm
	\[
	\|f\|_{ \widetilde L^\infty\big([0,T], H^{m}_x E_v\big)}^2
	:= \sum_{k \in \Z^3}  \langle k\rangle^{2m}\big \|\widehat f(\cdot, k,\cdot)\big\|_{L^\infty([0,T],E_v)}^2  \, .
	\]
	We have denoted by~$(\widehat f(k))_{k \in \Z^3}$ the Fourier coefficients of~$f$  in the space variable. When more convenient, we will sometimes use the notation~$\mathcal F_xf$ for~$\widehat f$. To lighten notation, we shall often write~$   L^p_I H^{m}_x E_v $ for~$L^p(I, H^{m}_x E_v )$ and similarly~$ \widetilde L^\infty_I  H^{m}_x E_v $ for~$ \widetilde L^\infty\big(I, H^{m}_x E_v\big)$ when~$I$ is an interval of~$\R^+$. If~$I = [0,T]$ we will simply write~$ L^p_T H^{m}_x E_v$ and~$ \widetilde L^\infty_T  H^{m}_x E_v $.   Finally if~$T=\infty$ and in the absence of ambiguity we  write~$ L^p_t
  H^{m}_x E_v $ for~$L^p(\R^+, H^{m}_x E_v )$.

\medskip
We will   use the notation~$\mathbb P$ for the Leray projector onto divergence free vector fields.
For any triplet $(\rho_{\rm in}, u_{\rm in}, \theta_{\rm in})  $ defined on~$\T^3$ (considered as initial data, whence the subscript~``in'') we denote their projection onto incompressible/Boussinesq modes by 
	\begin{equation} \label{def:barhydro}
	\bar \rho_{\rm in} := \frac25 \rho_{\rm in} - \frac35 \theta _{\rm in}\,, \qquad
	\bar u_{\rm in}  := \mathbb P u_{\rm in} 	\qquad \text{and} \qquad
	\bar \theta_{\rm in}  := - \rho_{\rm in} \,. 
	\end{equation}
The kinetic counterpart of~$(\bar\rho_{\rm in}, \bar u_{\rm in},\bar \theta_{\rm in})  $  will be denoted
\begin{equation} \label{def:bargin}
	  g_{\rm in} (x,v) := 
	\Big\{ \bar \rho_{\rm in} (x)  + \bar u_{\rm in} (x) \cdot v  + \bar \theta_{\rm in} (x) \frac{|v|^2-3}{2} \Big\} \mu^{\frac12}(v) 
	\end{equation}
and if~$(\rho,u,\theta)$  solves~\eqref{eq:NSF}   with the  initial data $(\bar \rho_{\rm in}, \bar u_{\rm in}, \bar \theta_{\rm in})$  then we will write
		\begin{equation} \label{def:g}
 	g(t,x,v) := 
	\Big\{ \rho (t,x)  + u (t,x) \cdot v  +\theta (t,x) \frac{|v|^2-3}{2} \Big\} \mu^{\frac12}(v)\,.
	\end{equation}
	{We will say that the initial data~$  g_{\rm in} $ is well-prepared if writes under the form~(\ref{def:bargin}).}
{Note that  if~$\rho_{\rm in}, u_{\rm in}, \theta_{\rm in}$ lie in~$ H^{\frac12}(\T^3)$, 
then the function~$g$ belongs to the functional space~$\widetilde L^\infty_T H^{\frac12}_x L^2_v \cap  L^2_TH^{\frac32}_x L^2_v $ for  
	{all~$T<T^*$ where~$T^*$ is the life span of the solution to the Navier-Stokes-Fourier system: more properties are provided at the beginning of Section~\ref{sec:strategy} below. }
 Actually~(\ref{def:g}) shows that~$g$ also  belongs  to~$\widetilde L^\infty_T H^{\frac12}_x H^{s,*}_v\cap  L^2_TH^{\frac32}_x H^{s,*}_v $.}
 {
 In what follows, we shall denote by $C$ any multiplicative constant that depends only on fixed numbers and its value may change from line to line. }
	{The following shorthand notation will also be useful in the following: for any  real number~$m $, the Sobolev spaces~$H_x^{m+ 0}$ and~$H_x^{m- 0}$ are defined by 
	$$
	f \in H_x^{m\pm 0}   \Longleftrightarrow 	\exists \eta>0 \,, \quad f \in H_x^{m\pm\eta}   \, .
	$$
By abuse of notation 	we shall denote by~$\|f \|_{H_x^{m\pm 0} }$ the norm of~$f$ in~$H_x^{m\pm\eta} $ with~$\eta$ arbitrarily small. }

\subsection{State of the art}  \label{state of the art}
We give here  a short overview of the existing literature on the problem of deriving fluid equations from kinetic ones.

The first justifications of the link between kinetic and fluid equations were formal and based on asymptotic expansions by Hilbert~\cite{Hilbert}, Chapman, Enskog~\cite{ChapEns} and Grad~\cite{Gradhydro}. The first rigorous convergence proofs based also on  asymptotic expansions were given by Caflisch~\cite{Caflisch} (see also~\cite{Lachowicz} and~\cite{DeMasi-Esposito-Lebowitz}). In those papers, the limit is justified up to the first singular time for the fluid equation. By using his nonlinear energy method, Guo~\cite{GuoBNS}   justified the limit towards the Navier-Stokes equation and beyond in Hilbert's expansion from Boltzmann and Landau equations.

There have also been some convergence proofs based on spectral analysis in the framework of strong solutions close to equilibrium introduced by Grad~\cite{Grad} and Ukai~\cite{Ukai} for the Boltzmann equation. In this respect, we refer to the works by Nishida~\cite{Nishida},  Bardos and Ukai~\cite{Bardos-Ukai}. These results use the description of the spectrum of the linearized Boltzmann equation in Fourier space in the space variable performed in~\cite{Nicolaenko,CIP,Ellis-Pinsky} by respectively Nicolaenko; Cercignani, Illner and Pulvirenti; Ellis and Pinsky. The approach in the present paper as well as in~\cite{GT,CRT,Gervais1,Gervais2,Gervais-Lods,CC} are reminiscent of these ones.

Finally, let us mention that this problem has been extensively studied in the framework of weak solutions, the goal being to obtain solutions for the fluid models from renormalized solutions introduced by DiPerna and Lions in~\cite{DiPerna-Lions} for the Boltzmann equation. We shall not make an extensive presentation of this program as it is out of the realm of this paper, but let us mention that it was started by Bardos, Golse and Levermore at the beginning of the nineties in~\cite{BGL1,BGL2} and was continued by those  authors, Saint-Raymond, Masmoudi, Lions among others. We mention here  a (non exhaustive) list of papers which are part of this program \cite{GSR1,GSR2,Levermore-Masmoudi,Lions-Masmoudi,SRbook}. 

More recently, some uniform in $\eps$ estimates on kinetic equations have allowed to prove (at least) weak convergence towards the Navier-Stokes equation. Let us mention~\cite{Jiang-Xu-Zhao,Rachid2} in which the cases of the Boltzmann equation without cut-off and the Landau equations are treated by Jiang, Xu and Zhao on the one hand and by Rachid on the other hand.
In~\cite{Briant-BNS,BMM}, Briant and Briant, Merino-Aceituno and Mouhot have obtained  convergence to equilibrium results for the rescaled Boltzmann equation (and also the Landau equation in~\cite{Briant-BNS}) uniformly in the rescaling parameter using respectively hypocoercivity and enlargement methods. In~\cite{BMM}, the authors are able to weaken the assumptions on the data down to Sobolev spaces with polynomial weights (see also~\cite{ALT} for the inelastic Boltzmann equation). Notice that Briant~\cite{Briant-BNS} has combined this with the Ellis and Pinsky result~\cite{Ellis-Pinsky} to recover strong convergence in the case of the elastic Boltzmann equation. To end this part, we mention the works~\cite{CRT,CC} in which the authors also obtain uniform in $\eps$ estimates on the Landau equation and Boltzmann equation without cutoff respectively and also obtain a result of strong convergence towards the incompressible Navier-Stokes equation. 

Finally, let us bring up more recent works that have inspired the present paper. First, the paper~\cite{GT} in which the second and third authors proved that the life span of the solutions to the rescaled Boltzmann equation (for hard-spheres collisions) is bounded from below by that of the Navier-Stokes equation for~$\eps$ small enough. The main feature of the proof was to perform a fixed point argument by using information on the limit system since the starting point is the solution of the Navier-Stokes system (which is not the most common viewpoint). Gervais~\cite{Gervais1,Gervais2}   extended the functional framework in which this result holds. He   proved  a similar result in polynomially weighted spaces, his strategy being a combination of~\cite{GT} and of the one used in~\cite{BMM} by Briant, Merino-Aceituno and Mouhot in order to get uniform in $\eps$ estimates on solutions in polynomially weighted spaces. We also point out the paper by Gervais and Lods~\cite{Gervais-Lods} in which a unified framework  is also provided, which encompasses a large class of kinetic equations (including in particular the result in~\cite{GT}).


\subsection{Main result}
All the results mentioned in the previous paragraph concerning the convergence of strong solutions are stated in   functional spaces which are usual for the study of strong solutions to nonlinear kinetic problems, namely in which there is  an algebra structure in the space variable, typically $H^\ell_x$ with~$\ell>3/2$ (more generally $\ell>d/2$ in dimension~$d$).  Indeed the collision operator~$Q_B$
involves the product of~$f(x,v)$ and~$f(x,v')$ at the same point~$x$, so continuity of~$f$ seems to be required to make sense of the product (this requirement is of course too strong: it is actually possible to relax it in some cases, see the work by Arsénio in~\cite{Arsenio2} for example).
However it is well-known that the Navier-Stokes equations can be solved for initial data with
less regularity, namely~$H^\frac12_x$ ($H^{\frac d2-1}_x$ in dimension~$d$). Our goal in this work is to analyze to what extent the assumptions  one   makes on the initial data~$f^\eps_{\rm in}$ to the kinetic equation~(\ref{eq:feps_intro}) can reflect this discrepancy between the kinetic and the fluid frameworks.

The  main goal of our analysis is thus to show that  given an initial data in~$H^{\frac12}_x$ for the incompressible (NSF) system, the associate solution  to~(NSF), as long as it exists,  is the limit  of a sequence of solutions to the rescaled Boltzmann or Landau equation. More precisely we are able to construct, on the same life span as the solution to~(NSF), a sequence of solutions to the kinetic equation associated with   initial data whose 
  hydrodynamic part converges in~$H^{\frac12}_x$ to the given hydrodynamic profile, and whose microscopic part converges to zero in~$H^{\frac12}_x$  and may blow up (in a controled way) in~$H^{\ell }_x$ for~$\ell>3/2$.        Let us also underline that there is no smallness assumption  on the initial data of the fluid system, and we are able to treat the cases of non-global and global solutions to the fluid system in a unified framework.

\begin{theo} \label{theo:main}
Let~$3/2 < \ell \le 2$ be given. Consider $(\rho_{\rm in}, u_{\rm in}, \theta_{\rm in}) \in H^{\frac12}(\T^3)$ that are mean-free{, such that~$u_{\rm in}  $ is divergence free and~$\rho_{\rm in}+\theta_{\rm in} = 0$. Let~$(\rho,u,\theta)$ be the unique solution to~\eqref{eq:NSF} associated with the initial data $( \rho_{\rm in}, u_{\rm in},\theta_{\rm in})$} in the space~$\widetilde L^\infty_TH^{\frac12}_x \cap L^2_TH^\frac32_x $, for some~$T>0$.

Consider two real numbers~$\alpha <1/4$ and  $\beta<1/2$. Let $f_{\rm in}^\eps $ be a family of functions such that
	\[
	 \P_0 f^\eps_{\rm in}= \psi(\eps^\alpha |D_x|)g_{\rm in} 		\]
	 {and~$\P_0^\perp f_{\rm in}^\eps$ is arbitrary, going to zero in the sense that }
	\[
\|\P_0^\perp f_{\rm in}^\eps\|_{H^{\frac12}_x L^2_v} + \eps^\beta \|\P_0^\perp f_{\rm in}^\eps\|_{H^\ell_x L^2_v}
	\xrightarrow[\eps \to 0]{} 0
	\]
for some smooth, compactly supported function~$\psi$. Then, there is $\eps_0>0$ such that for any~$\eps \leq \eps_0$, there exists a unique solution $f^\eps$ to the kinetic equation~\eqref{eq:feps_intro} with initial data~$f_{\rm in}^\eps$, which belongs to the space~$\widetilde L^\infty_TH^{\ell}_x L^2_v  \cap L^2_TH^{\ell}_x H^{s,*}_v $, and it moreover satisfies, with  notation~{\rm(\ref{def:g})},
	\[
	\|f^\eps - g\|_{  \widetilde L^\infty_T H^{\frac12}_x L^2_v  }+\|f^\eps - g\|_{  L^2_TH^\frac32_x H^{s,*}_v  } \xrightarrow[\eps \to 0]{} 0 \,. 
	\]

\end{theo}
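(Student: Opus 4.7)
The plan, following the philosophy of~\cite{GT}, is to set up the fixed-point problem around the fluid profile rather than for~$f^\eps$ itself. Write~$f^\eps = g + r^\eps$ with~$g$ the lift~\eqref{def:g} of the solution to~\eqref{eq:NSF}. Since $Lg=0$, the remainder satisfies
\[
\partial_t r^\eps + \eps^{-1} v \cdot \nabla_x r^\eps = \eps^{-2} L r^\eps + 2\eps^{-1}\Gamma(g, r^\eps) + \eps^{-1}\Gamma(r^\eps, r^\eps) + S^\eps,
\]
with $r^\eps_{|t=0} = \P_0^\perp f^\eps_{\rm in} - (\Id - \psi(\eps^\alpha|D_x|))g_{\rm in}$, and where the source~$S^\eps$ collects the terms measuring the failure of~$g$ to solve~\eqref{eq:feps_intro} exactly. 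A Hilbert-expansion-type computation using~\eqref{eq:NSF} together with the definitions of~$\nu_{\rm \small NS}$ and~$\nu_{\rm \small heat}$ makes~$S^\eps$ of effective size~$\eps$ in~$L^2_TH^{\frac12}_xL^2_v$, modulo higher-derivative terms on~$g$ absorbed by its~$L^2_T H^{\frac32}_x$ regularity.

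The construction of~$r^\eps$ is then a contraction argument carried simultaneously in two spaces
\[
X_\ell := \widetilde L^\infty_T H^\ell_x L^2_v \cap L^2_T H^\ell_x H^{s,*}_v
\quad\text{and}\quad
X_{\frac12} := \widetilde L^\infty_T H^{\frac12}_x L^2_v \cap L^2_T H^{\frac32}_x H^{s,*}_v.
\]
At both regularity levels, the linear hypocoercivity/enlargement machinery developed in the papers cited in Paragraph~\ref{state of the art} provides, uniformly in~$\eps$, a maximal-regularity estimate featuring an additional dissipative gain of the form~$\eps^{-2}\|\P_0^\perp r^\eps\|_{L^2_T H^m_x H^{s,*}_v}^2$. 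The key algebraic feature used for the nonlinearity is that~$\Gamma$ takes values in~$(\Ker L)^\perp$, so its pairing against~$r^\eps$ can always be made to hit the dissipative gain, absorbing the singular~$\eps^{-1}$ prefactor; the required product laws are~$H^{\frac12}_x\cdot H^{\frac32}_x\hookrightarrow H^{\frac12}_x$ at low regularity, and the algebra property of~$H^\ell_x$ at high regularity.

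The high-regularity estimate is used only to produce, for~$\eps$ small enough, a unique solution~$r^\eps$ on the full interval~$[0,T]$: because~$\beta<1/2$, the smallness condition of the contraction (schematically $\eps^{-1}\|r^\eps\|_{X_\ell}\ll 1$) accommodates an initial datum of size~$\eps^{-\beta}$ in~$H^\ell_xL^2_v$, yielding an a priori bound~$\|r^\eps\|_{X_\ell}=O(\eps^{-\beta})$. The convergence is then obtained a posteriori from the low-regularity estimate applied to the already-constructed~$r^\eps$: splitting each occurrence of~$\Gamma(r^\eps,r^\eps)$ with one argument in~$X_\ell$ and one in~$X_{\frac12}$, the singular prefactor becomes~$\eps^{-1}\eps^{-\beta}\|r^\eps\|_{X_{\frac12}}$ against a dissipative factor~$\eps$, which closes thanks to the smallness of the initial~$X_{\frac12}$ norm of~$r^\eps$. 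The parameter constraint~$\alpha<1/4$ enters through the simultaneous control of the hydrodynamic part of~$r^\eps_{|t=0}$ in~$H^{\frac12}_x$ (where the truncation error vanishes) and in~$H^\ell_x$ (where a Bernstein-type estimate yields a blow-up~$\eps^{-(\ell-\frac12)\alpha}$ that must stay within the~$\eps^{-\beta}$ budget already spent).

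The main obstacle is the interplay between the two regularity levels: the bilinear term cannot be closed in~$H^{\frac12}_x$ alone, while the natural high-regularity norm is itself allowed to diverge as~$\eps\to 0$. The resolution is the two-norm bookkeeping sketched above, in which every problematic factor~$\eps^{-1}$ is compensated either by the dissipative gain on~$\P_0^\perp r^\eps$ or by the small~$X_{\frac12}$ norm of~$r^\eps$ guaranteed by the hypothesis, the cumulative balance being precisely the reason for the quantitative constraints~$\alpha<1/4$ and~$\beta<1/2$.
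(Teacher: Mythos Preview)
Your outline has the right philosophy but two genuine gaps that would make the argument fail as written.

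\textbf{Working around $g$ rather than a regularised $g^\eps$.} You linearise at the exact NSF profile $g$, which only lies in $\widetilde L^\infty_T H^{1/2}_x \cap L^2_T H^{3/2}_x$. But the high-regularity leg of your two-norm scheme requires $H^\ell_x$ control on terms built from the profile (the linear coupling $\Gamma(g,r^\eps)$ in $X_\ell$, the source in $X_\ell$, and --- if you do a Hilbert expansion --- the corrector $g_1 = L^{-1}(\text{something}(\nabla_x g))$). These norms are simply infinite for $g\in H^{1/2}_x$. The paper avoids this by linearising instead at $g^\eps$, the NSF solution launched from the truncated data $\psi(\eps^\alpha|D_x|)g_{\rm in}$: this profile is smooth, with $H^m_x$ norms blowing up like $\eps^{-\alpha(m-1/2)}$, and the whole $\eps^\beta$-weighted bookkeeping is designed to absorb exactly that controlled blow-up. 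Without this step your $X_\ell$ estimates do not close.

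\textbf{The source term is not of size $\eps$.} Writing the equation for $r^\eps$ in PDE form, the source contains $-\eps^{-1} v\cdot\nabla_x g$, which is of order $\eps^{-1}$, not $\eps$. A first-order Hilbert corrector kills this term but produces a new source of order $1$ (not $\eps$) that involves $\nabla_x^2 g$ and $\nabla_x(g\otimes g)$; going one order further costs yet another derivative on $g$, well beyond $L^2_T H^{3/2}_x$. The paper sidesteps this entirely by working in Duhamel form and using the spectral decomposition $U^\eps = U_{\rm NSF} + \widetilde U^\eps_{\rm NSF} + U^{\eps,\flat}_{\rm wave} + U^{\eps,\sharp}$: the source becomes $\Psi^\eps[g^\eps,g^\eps]-\Psi_{\rm NSF}[g^\eps,g^\eps]$, and its smallness is obtained by a frequency-by-frequency analysis (including an integration by parts in time to exploit the fast oscillations of the wave modes). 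This is also where the constraint $\alpha<1/4$ actually arises --- it is the condition $\tfrac12-2\alpha>0$ needed for the source estimate --- not from the initial-data balance you describe.

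A secondary issue: your contraction argument treats the linear coupling $2\eps^{-1}\Gamma(g,r^\eps)$ as harmless once the $\eps^{-1}$ is absorbed by the dissipative gain, but the remaining factor is $\|g\|$, which is not small. The paper handles this by splitting $[0,T]$ into finitely many subintervals on which $\|g\|_{L^4 H^1}+\|g\|_{L^2 H^{3/2}}$ is small enough for the fixed point to contract, then iterating; a plain contraction on $[0,T]$ would fail for large data.
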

 \begin{rem}
 The restriction~$\ell\le 2 $ is purely technical, the result would hold for any~$\ell > 2$ up to some adaptations in the nonlinear estimates.  {Note that~$ \P_0 f^\eps_{\rm in}$  is a smoothened version of the well-prepared data~$g_{\rm in}$, with the higher regularity norms   allowed to blow up, in a controled way, with~$\varepsilon$.}
 The threshold value $1/4$ for the truncation parameter $\alpha$ comes from technical considerations that appear throughout the proof.  Note that such an assumption (the cut-off in frequency space) is reminiscent of the setting chosen in~\cite{DH} in the context of the incompressible limit.
 The additional parameter $\beta$ quantifies the possible blow up of the~$H^{\ell}_x H^{s,*}_v $ norm of the ``microscopic'' part of the initial data.    \end{rem}
 \begin{rem}\label{life span}
The proof of Theorem~\ref{theo:main} shows that 
if the solution~$(\rho,u,\theta)$ exists globally in time, regardless of the size of the initial data, the parameter~$\eps_0$ may be chosen uniformly in~$T$ (as is the case in~\cite{GT}).
  \end{rem}
\begin{rem}\label{restrictions remark}
Throughout this paper, we only consider the case of well-prepared data in the torus and also only the case of not too soft potentials for the kinetic equations. We believe that using the same method of proof combined with arguments and estimates of~\cite{GT,CC}, our analysis could be extended to a more general setting by considering the problem in the whole space (also including ill-prepared data in~$\R^3$) and very soft potentials for the kinetic equations. 
 \end{rem}
  
\subsection{Sketch of the proof and plan of the paper} 
The idea of the proof follows the method of~\cite{GT}, consisting in solving by a fixed point argument the equation obtained by taking  the difference between the kinetic and hydrodynamic equations, written in Duhamel form. The main interest of this equation is that it no longer involves the kinetic unknown but writes schematically as
 	\begin{equation} \label{eq:fixedpoint}
	\delta^{\eps}(t) 
	= \mathcal D^{\eps}(t) +  \mathcal S^\eps(t) + \mathcal L^{\eps}[\delta^{\eps}](t) +  \Psi^{\eps}[\delta^{\eps},\delta^{\eps}]  (t) \, , 
	\end{equation}
where~$\mathcal D^{\eps}(t)$ depends only on the initial data{~$g_{\rm in}$}  {(recall that $  g_{\rm in}$ is defined in~\eqref{def:bargin})}, $\mathcal S^\eps(t)$ is a source term depending only on the hydrodynamical solution{~$g$}  ,~$  \mathcal L^{\eps}[\cdot]$ is a linear operator depending on the hydrodynamic solution~$g$, and~$ \Psi^{\eps}[\cdot,\cdot] $ is the usual, Boltzmann bilinear operator (see~(\ref{eq:deltaeps})  below). The difficulty then consists in proving that~$\mathcal D^{\eps}(t) $ and $\mathcal S^\eps(t)$ are small, and that~$  \Psi^{\eps}$ is bilinear continuous,    in   
   a low regularity framework. An additional difficulty comes from the fact that~$ \mathcal L^{\eps}$ is not small if~$  g_{\rm in}$ is not small: since smallness is necessary for the fixed-point to work, we devise a Gronwall-type argument   to get round this difficulty (in this regard also, the proof differs from the one presented in~\cite{GT}).
   
In Section~\ref{sec:prelim}, we give some useful tools to estimate each part of equation~\eqref{eq:fixedpoint} (spectral decomposition, semi-group and nonlinear estimates). In Section~\ref{sec:strategy}, we  reduce the proof  of Theorem~\ref{theo:main} to a number of intermediate estimates. These estimates are proved   in Sections~\ref{sec:some results} and~\ref{sec:deltaeps}.

\medskip
 \noindent\textbf{Acknowledgments.}  KC was partially supported by the Project CONVIVIALITY ANR-23-CE40-0003 of the French National Research Agency (ANR).
 IT was supported by the French government through the France 2030 investment plan managed by the ANR, as part of the Initiative of Excellence Université Côte d’Azur under reference
number ANR-15-IDEX-01.

\section{Preliminaries}\label{sec:prelim}

 Our approach heavily relies on previous results on the spectral analysis of the linearized kinetic operator
$$
	\Lambda^\eps := \frac{1}{\eps^2}L - \frac{1}{\eps} v \cdot \nabla_x 	$$  in Fourier space for the space variable $x$ (see~\cite{Nicolaenko,Ellis-Pinsky,Gervais1,Gervais-Lods}), where we recall that $L$ is defined in~\eqref{def:L}.
We denote by~$U^\eps(t)$ the semi-group associated to~$	\Lambda^\eps$.
	
Taking the Fourier transform in the space variable, we denote, for all $ k \in  \Z^3$, 
	$$
	\widehat\Lambda^\eps( k) := \frac{1}{\eps^2} L - \frac{1}{\eps} i v \cdot  k
	$$
and
	$
	\widehat U^\eps (t, k) := e^{t \widehat\Lambda^\eps( k)},
	$
so that
	$
	U^\eps(t) = \FF_x^{-1} \widehat U^\eps(t,\cdot) \FF_x.
	$
We also denote
	\begin{equation}\label{defPsieps}
	\Psi^\eps[f_1,f_2] (t) := \frac{1}{\eps} \int_0^t U^\eps(t-t') \Gamma_{\mathrm{sym}}(f_1(t') , f_2(t')) \, \d t'\,,
	\end{equation}
	where $\Gamma_{\mathrm{sym}} (f_1,f_2) := \big(\Gamma(f_1,f_2) + \Gamma(f_2,f_1)\big)/2$ denotes the symmetrized form of $\Gamma$,	so that~(\ref{eq:feps_intro}) takes the Duhamel form
	\begin{equation} \label{eq:feps_intro Duhamel}
	f^\eps(t) =U^\eps(t) f^\eps_{\rm in} + \Psi^\eps[f^\eps,f^\eps] (t) \, .
	\end{equation}
In Fourier space we have
	$$
	\widehat \Psi^\eps  [f_1,f_2] (t, k) = \frac{1}{\eps} \int_0^t \widehat U^\eps(t-t',  k) \widehat \Gamma_{\mathrm{sym}}\big(f_1(t') , f_2(t')\big)( k) \, \d t'\,,
	$$
where  	
	$$
	\widehat \Gamma_{\mathrm{sym}}(f_1 , f_2)( k) := \sum_{ k'  \in \Z^3} \Gamma_{\mathrm{sym}}\big (\widehat f_1( k- k' ) , \widehat f_2( k' )\big) \,.
	$$
Observe that
	$$
	\Psi^\eps [f_1,f_2] (t) = \FF_x^{-1} \widehat \Psi^\eps[f_1,f_2] (t,\cdot) \FF_x\,.
	$$
It turns out that there is a complete description of the operator~$U^\eps$: this goes back to~\cite{Nicolaenko,Ellis-Pinsky} for the Boltzmann  hard-spheres kernel,~\cite{YY} for the Boltzmann non-cutoff (resp. Landau) kernels with hard and moderately soft potentials~$\gamma+2s \ge 0$ (resp. $\gamma+2 \geq 0$), and \cite{YY2} for the Boltzmann non-cutoff (resp.$\!$ Landau) kernels with very soft potentials~$\gamma+2s < 0$ (resp.~$\gamma+2<0$). For the not too soft potentials, we also refer to the paper~\cite{Gervais-Lods} in which the authors provide a more modern spectral approach. 

Let us start by noticing that
	\begin{equation}\label{def:Ueps}
	\widehat U^\eps(t, k) =  \widehat U^1\Big(\frac t{\eps^2},\eps  k\Big) \, .
	\end{equation} 
	Roughly speaking, for~$ | k| \leq \kappa $ small enough, the operator~$\widehat \Lambda^1( k):= L-iv \cdot  k$ can be seen as a perturbation of~$L$. In particular it can be proved (see \cite{Ellis-Pinsky}) that the~5-dimensional kernel of~$L$ recalled in~(\ref{eq:Ker L}) splits into 4 eigenvalues (the first one below is double) that satisfy for all~$ | k| \leq \kappa $
	\begin{equation}\label{NS heat eigenvalues}
	\begin{aligned}
	\lambda_{\rm \small NS}( k)&: =   - \nu_{\rm \small NS} | k|^2 + \gamma_{\rm \small NS}( k) \,,\quad 
	\nu_{\rm \small NS}>0 \, , \quad |\gamma_{\rm \small NS} ( k)|\le \frac {\nu_{\rm \small NS}}2 | k|^2   \\
	\lambda_{\rm \small heat}( k)&: =   - \nu_{\rm \small heat} | k|^2 + \gamma_{\rm \small heat}( k)  \,, \quad
	\nu_{\rm \small heat}>0 \, , \quad |\gamma_{\rm \small heat} ( k)|\le \frac {\nu_{\rm \small heat}}2 | k|^2 
	\end{aligned}
	\end{equation}
and
	\begin{equation}\label{wave eigenvalues}
	\begin{aligned}
	\lambda _{\rm \small wave\pm} ( k) &:= \pm i c | k| - \nu_{\rm \small wave\pm}   | k|^2 + \gamma _{\rm \small wave\pm}( k) \, , \\
	&\qquad\qquad c>0 \, , \, \,  \nu_{\rm \small wave\pm} >0 \, , \quad
	|\gamma _{\rm \small wave\pm}( k)|\le \frac { \nu_{\rm \small wave\pm} }2 | k|^2 \,.
	\end{aligned}
	\end{equation}
Moreover, the associate projectors~$\mathcal P_\star$ can be written (where~$\star$ stands for~$\rm \small NS, \rm \small heat,  $ or~$\rm wave\pm$)
\begin{equation}\label{dec Pstar}
	\mathcal P_\star  = \mathcal P_{\star}^0\left({ k \over | k|}\right) +| k|  \mathcal P_{\star}^1\left({ k \over | k|}\right) 
	+ | k|^2  \mathcal 	P_{\star}^2( k)\, ,
	\end{equation}
with  $ \mathcal P_{\star}^n$  bounded linear operators on $L^2_v$ with operator norms uniform for $| k| \le \kappa$. We even have that  $\mathcal P_\star^0( k/| k|)$, $\mathcal P_\star^1( k/| k|)$ and $\mathcal P_\star^2( k)$ are bounded from $(H^{s,*}_v)'$ into $H^{s,*}_v$ uniformly in $| k| \leq \kappa$. We refer to~\cite[Theorem~1.6-(2)]{Gervais-Lods} for this property (note the following correspondance of notation $H^\bullet = H^{s,*}_v$ and $H^\circ = (H^{s,*}_v)'$).  We also have that if $\star \neq \star'$, then  $ \mathcal P^0_\star  \mathcal P^0_{\star'}=0$ and the orthogonal  projector~$\mathbf P_0$ onto $\operatorname{Ker} L$ satisfies
\begin{equation}\label{remarkP0}		
	\mathbf P_0  = \sum_{\star \in \{\rm NS, heat, wave\pm\}}    \mathcal P_{\star}^0\Big({ k \over | k|}\Big)\, .
	\end{equation}
Actually $ \mathcal P_{\rm \small NS}^0( k/| k|)$ is the projection onto the $2$-dimensional space spanned by~$v-
\operatorname{pr}_ k v  $ for any $ k$ (this corresponds to the divergence free condition), and 
	$$
  	\mathcal P_{\rm \small heat}^0\Big({ k \over | k|}\Big) \hat f ( k,v) 
	= \frac{2}{5} \Big(-1+ \frac 12 (|v|^2-3)\Big)   \mu^{\frac12} (v)\int_{\R^3} \Big(-1+ \frac 12 (|w|^2-3)\Big)  \mu^{\frac12}  (w)\widehat f ( k,w)\, \d w \, .
 	$$ 
Finally 
 	$$
 	\begin{aligned}
 	& \mathcal P_{\rm wave\pm}^0\Big({ k \over | k|}\Big) \hat f ( k,v)\\
 	&  = \frac{3}{10} \Big(1 \pm \frac{ k}{| k|} \cdot v + \frac 13 (|v|^2-3)\Big) \mu^{\frac12}(v) \int_{\R^3} \Big(1 \pm \frac{ k}{| k|} \cdot w
	+ \frac 13 (|w|^2-3)\Big) \mu^{\frac12} (w)\widehat f  ( k,w)\, \d w \, .
 	\end{aligned}
 	$$ 
Thanks to~(\ref{def:Ueps}) and to this spectral study, we deduce as in~\cite{Bardos-Ukai,GT} that~$U^\eps$ can be decomposed as follows:
	\begin{equation}\label{decomposition Ueps}
	U^\eps(t) = U^{\eps,\flat}(t) + U^{\eps,\sharp}(t)
	\end{equation}
where~$ U^{\eps,\flat}(t) $  corresponds to the contribution of the low frequencies in the right part of the plane: 
	\begin{equation}\label{decomposition Uepsflat}
	\widehat U^{\eps,\flat}(t, k)  
	:= \chi \Big(\frac{\eps | k|}\kappa\Big)
	\sum_{\star \in \{\rm NS, heat, wave\pm\}}  e^{ \lambda_\star( \eps  k) \frac{t}{\eps^2} } \mathcal P_\star(\eps  k)\,,
	\end{equation}
	where~$\chi$ is a fixed smooth, compactly supported function.
 Moreover, since we consider not too soft potentials, there is $\lambda_0 >0$ such that uniformly in~$ k \in  \Z^3$  	
 	\begin{equation} \label{eq:decayUepssharp}
	\| \widehat U^{\eps,\sharp} (t, k) \|_{L^2_v \to L^2_v} \lesssim e^{- \lambda_0 \frac{ t}{\eps^2}}\, ,
	\qquad \forall  t \geq 0\,.
	\end{equation} 
{Notice that in the case of very soft potentials, the exponential decay should be replaced by an algebraic one (see for example~\cite{YY2}).}
In the study  of the limit~$\eps \to 0$ of~(\ref{eq:feps_intro}), it will be useful to decompose~$ U^{\eps,\flat}(t) $ into a part independent of~$\eps$ and a remainder, which will be shown to go to zero in a sense to be made precise later:
	\begin{equation}\label{decomposition Ueps1}
	 U^{\eps,\flat}  = U_{\rm NSF}  +\widetilde U^{\eps}_{\rm NSF}  + U^{\eps,\flat}_{\rm wave} 
 \, ,	 
	\end{equation}
where in Fourier variables
	\begin{equation}\label{decomposition Ueps NSF}
	\begin{aligned}
	\widehat U_{\rm NSF}(t, k) 
	&:= e^{ - \nu_{\rm \small NS} | k|^2 t} \mathcal P_{\rm \small NS}^0 \Big({ k \over | k|}\Big)+
	 e^{ - \nu_{\rm \small heat} | k|^2 t} \mathcal P_{\rm \small heat}^0\Big({ k \over | k|}\Big)\\
	\widehat U^{\eps,\flat}_{\rm wave}(t, k)
	&:=\chi \Big(\frac{\eps | k|}\kappa\Big) \sum_{\pm} 
	e^{ \lambda _{\rm \small wave\pm} (\eps  k) \frac t{\eps^2}} \mathcal P_{\rm wave\pm} (\eps  k) \, .
	\end{aligned}
	\end{equation}
According to~\eqref{defPsieps} and~\eqref{decomposition Ueps}, we can also decompose  
\begin{equation}\label{decomposition Psieps 1}
	\Psi^{\eps} = \Psi^{\eps,\flat} + \Psi^{\eps,\sharp} 
\end{equation}
where 
	\begin{equation}\label{decomposition Psieps 11}
	\mathcal{F}_x \left(  \Psi^{\eps  ,\sharp}[f_1,f_2](t)\right)( k) 
	:={1 \over \eps}  \int_0^t \widehat U^{\eps,\sharp}(t-t') \widehat  \Gamma_{\mathrm{sym}} \big(f_1(t'),f_2(t')\big) ( k)\, \d t'
	\end{equation}
and 
	\begin{equation}\label{decomposition Psieps 12}
	\mathcal{F}_x \left(  \Psi^{\eps ,\flat}[f_1,f_2](t)\right)( k) 
	:=\frac1\eps\chi \Big(\frac{\eps | k|}\kappa\Big)
	\sum_{\star}\int_0^t  e^{ \lambda_\star( \eps  k) \frac{t-t'}{\eps^2} }
	\mathcal P_\star(\eps  k) \widehat \Gamma_{\mathrm{sym}} \big(f_1(t'),f_2(t')\big) ( k)\, \d t' \,,
		\end{equation}
where  the sum runs over~$ \{\rm NS, heat, wave\pm\}$.
In the interest of the limit~$\eps\to 0$, this can be again decomposed as in~(\ref{decomposition Ueps1}), as follows:
\begin{equation}\label{decomposition Psieps 22}
	\Psi^{\eps,\flat}  
	= \Psi _{\rm NSF}+ \widetilde  \Psi_{\rm NSF}^{\eps}
 + \Psi^{\eps,\flat}_{\rm wave}  \end{equation}
 where writing~$\widehat\Psi_\star [f_1,f_2](t)=\mathcal{F}_x \big( \Psi _{\rm \star} [f_1,f_2](t)\big)  $  and recalling that~$\mathbf P_0 \Gamma_{\mathrm{sym}} = 0$,
 \begin{align*}
	\widehat \Psi _{\rm NSF} [f_1,f_2](t, k) 
	&:= \sum_{\star \in \{\rm NS, heat\}}
 \int_0^t e^{  -\nu _{\star}  (t-t') | k|^2} | k|\mathcal P_{\star} ^{1}\Big({ k \over | k|}\Big) \widehat \Gamma_{\mathrm{sym}} \big(f_1(t'),f_2(t')\big) ( k) \, \d t' 	\,,  \\
	\widehat \Psi^{\eps ,\flat} _{\rm wave} [f_1,f_2](t, k) 
	&:=  
 \chi\Big (\frac{\eps| k|}\kappa\Big)  \\
 &\hspace{0,5cm}
	\times \, \sum_{\pm}
	\int_0^t e^{  \lambda _{\rm \small wave\pm} (\eps  k)\frac {t-t'}{\eps^2}}   \mathcal P_{\rm \small wave\pm} (\eps k)   \widehat \Gamma_{\mathrm{sym}} \big(f_1(t'),f_2(t')\big) ( k)\, \d t' \,.
		\end{align*}
 It  can be checked that the solution~$g$ constructed in~(\ref{def:g}),  starting from~${ g_{\rm in}}$ as  defined in~(\ref{def:bargin}), satisfies
 \begin{equation}\label{Duhamelfluid}	 
 	g (t) = U_{\rm NSF} (t) g_{\rm in}+ \Psi_{\rm NSF} [ g , g ](t)\, .
 \end{equation}
		 
\section{Proof of the theorem} \label{sec:strategy}
 
 Let us start by presenting the functional framework in which we shall develop our proof. 
Let $3/2 < \ell \le 2$ be fixed. Recall that $\alpha <1/4$ and~$\beta<1/2$ have been introduced in Theorem~\ref{theo:main}. 
In the following, we shall assume without loss of generality that~$\alpha>0$ and~$\beta >  \alpha (\ell-1/2) $.

We now
  define for any interval~$I$ of~$\R^+$ the space
\begin{equation} \label{def:XT}
	\mathcal X_I^\eps := \left\{ f \in  
\widetilde L^\infty_IH^{\ell}_x L^2_v \cap
L^2_I H^{\ell}_x H^{s,*}_v \;\Big|\; \| f \|_{\mathcal X_I^\eps} < +\infty \right\}    	
	\end{equation}
which we endow with the norm
 	\begin{equation} \label{def:norm_XT}
	\begin{aligned}
\| f \|_{\mathcal X^\eps_I} 
&:= \| f \|_{\widetilde L^\infty_I H^{\frac12}_x L^2_v} + \| \mathbf P_0 f \|_{L^2_I H^\frac32_x H^{s,*}_v} + \frac{1}{\sqrt \eps} \| \mathbf P_0^\perp f \|_{L^2_I H^\frac32_x H^{s,*}_v} \\
&\quad
+ \eps^{\beta} \Big(\| f \|_{\widetilde L^\infty_I H^{\ell}_x L^2_v}+   \| \mathbf P_0 f \|_{L^2_I H^{\ell}_x H^{s,*}_v} 
+ \frac{1}{\sqrt\eps} \| \mathbf P_0^\perp f \|_{L^2_I H^{\ell}_x H^{s,*}_v}\Big)
\, .
\end{aligned}
	\end{equation}
In the following we write~$\mathcal X_T^\eps:=\mathcal X_{[0,T]}^\eps$.

\begin{rem}\label{remark truncation}
If~$f=f(x,v)$ is a function in~$H^{\frac12}_x L^2_v$ and if~${\psi}$ is a smooth, compactly supported function on~$\R^3$, then   the sequence $f^\eps:=\psi (\eps^\alpha |D_x|) f$ goes to zero in~$\eps^\beta H^\ell_xL^2_v$ because of the assumption~$\beta > \alpha (\ell-1/2)$: indeed
	\[
 	\eps^\beta\|f^\eps\|_{ H^{\ell}_x L^2_v }
	\lesssim 
	\eps^{\beta-\alpha (\ell-\frac12)}\|f\|_{ H^{\frac12}_x L^2_v }\, .
	\]
\end{rem}

\medskip
Recall that we  consider well-prepared initial data~$g_{\rm in}$ in~$H^{\frac12}_x L^2_v$ and the associated maximal fluid solution $g \in \widetilde L^\infty_T H^{\frac12}_x L^2_v \cap L^2_T H^\frac32_x L^2_v$ for~$T<T^\star$, where the maximal life span~$T^\star>0$ {satisfies}
$$
\lim_{T \to T^\star} \|g\|_{L^2_T H^\frac32_x L^2_v} = \infty\, .
$$
This solution satisfies
\begin{equation}\label{eq:g_bound}
\| g \|_{\widetilde L^\infty_T H^{\frac12}_x L^2_v} + \| g \|_{L^2_T H^\frac32_x L^2_v} \lesssim \| { g_{\mathrm{in}} }\|_{H^{\frac12}_x L^2_v}\, ,
\end{equation}
{where the constant may depend on~$T^\star$ but is uniform if~$T^\star = \infty$ (see~\cite{GIP}). We refer for instance to~\cite{BCD, lemarie, MR3469428} for more on the Navier-Stokes equations.}
Note that as mentioned in Section~\ref{sect:notation}, actually~$g$ belongs also to~$\widetilde L^\infty_T H^{\frac12}_x H^{s,*}_v \cap L^2_T H^\frac32_x H^{s,*}_v $, with the {same bound}.

We then build a family of initial data~$f_{\rm in}^\eps$ to Equation~(\ref{eq:feps_intro})
such that on the one hand~$  \P_0 f^\eps_{\rm in}= \psi(\eps^\alpha |D_x|)g_{\rm in}$ for some smooth, compactly supported function~$\psi$, and on the other hand~$\P_0^\perp f_{\rm in}^\eps$ is arbitrary but   goes to zero in~$H^{\frac12}_xL^2_v$ while~$\eps^\beta \P_0^\perp f_{\rm in}^\eps$ goes to zero in~$H^{\ell}_xL^2_v$. Note that as pointed out in Remark~\ref{remark truncation},~$  \P_0 f^\eps_{\rm in}$ actually goes to $0$ in~$ \eps^\beta H^{\ell}_xL^2_v$ (since~$\beta> \alpha(\ell-1/2)$). Our goal is to prove that the solution~$f^\eps$  of~(\ref{eq:feps_intro}) with data~$f_{\rm in}^\eps$ converges to~$g$ as stated in Theorem~\ref{theo:main}, on the same life span as~$g$.

\medskip

The first step consists in replacing~$g$ by a smooth solution to~\eqref{eq:NSF} in the following way: let us define
$$
 g^\eps(t,x,v) := 
	\Big\{ \rho^\eps (t,x)  + u^\eps (t,x) \cdot v  +\theta^\eps (t,x) \frac{|v|^2-3}{2} \Big\} \mu^{\frac12}(v)
$$
where~$(\rho^\eps ,u^\eps ,\theta^\eps )$  solves~\eqref{eq:NSF} with the initial data $ \psi(\eps^\alpha |D_x|) ( \rho_{\rm in}, u_{\rm in},\theta_{\rm in})$. It is classical (see for instance~\cite[Proposition B.5]{GT}, and~\cite{cheminSIAM,BCD,lemarie} for more), that for~$\eps$ small enough,~$(\rho^\eps ,u^\eps ,\theta^\eps )$  belongs to~$\widetilde L^\infty_TH^{\frac12}_x L^2_v \cap L^2_TH^\frac32_x H^{s,*}_v $, and there holds
\begin{equation} \label{L2T LinftyT geps1}
\| g^\eps-g\|_{ \widetilde L^\infty_TH^{\frac12}_x L^2_v } + \| g^\eps-g\|_{L^2_TH^\frac32_x H^{s,*}_v  } \xrightarrow[\eps \to 0]{}  0 \, .
\end{equation}
Note that in particular
	\begin{equation} \label{L2T LinftyT geps2}
	\|g^\eps\|_{\widetilde L^\infty_T H^{\frac12}_x L^2_v} \xrightarrow[\eps \to 0]{} 
	\|g\|_{\widetilde L^\infty_T H^{\frac12}_x L^2_v}
	\quad \text{and} \quad 
	\|g^\eps\|_{L^2_T H^\frac32_x H^{s,*}_v} \xrightarrow[\eps \to 0]{} \|g\|_{L^2_T H^\frac32_x H^{s,*}_v}\,.
	\end{equation}
To prove Theorem~\ref{theo:main}, it thus suffices to prove that
$$
\| g^\eps-f^\eps\|_{\widetilde L^\infty_TH^{\frac12}_x L^2_v} + 
  \| g^\eps-f^\eps\|_{L^2_TH^\frac32_x H^{s,*}_v  } 
\xrightarrow[\eps \to 0]{} 0 \, .
$$
Note that by propagation of regularity  (see again~\cite[Proposition B.5]{GT}) there holds, for any~$m>1/2$,
\begin{equation}\label{propag regularity}
\begin{aligned}
	\| g^\eps\|_{ \widetilde L^\infty_TH^{m}_x L^2_v  }  +	\| g^\eps\|_{   L^2_TH^{m+1}_x H^{s,*}_v }&\lesssim \|  \P_0 f^\eps_{\rm in}\|_{H^{m}_x L^2_v} \exp \Big( C\|g^\eps\|_{L^2_TH^\frac32_x H^{s,*}_v  } ^2 \Big)
\\
& \lesssim \|  \P_0 f^\eps_{\rm in}\|_{H^{m}_x L^2_v} \exp \Big( C\|g\|_{L^2_TH^\frac32_x H^{s,*}_v  }^2 \Big)
\\
& \lesssim \|  \P_0 f^\eps_{\rm in}\|_{H^{m}_x L^2_v} \exp \Big( C\| { g_{\mathrm{in}}} \|_{H^{\frac12}_x L^2_v }^2 \Big)
	\end{aligned}
\end{equation}
due to~(\ref{L2T LinftyT geps2}) and \eqref{eq:g_bound}.
In   particular~$ g^\eps$ satisfies
	\begin{equation} \label{smallness geps}
	\begin{aligned}
	&	\| g^\eps\|_{ \widetilde L^\infty_TH^{m}_x L^2_v} + \| g^\eps\|_{   L^2_TH^{m+1}_x H^{s,*}_v }\\
	&\qquad \lesssim \eps^{ - \alpha(m-\frac12)} \|{g_{\rm in}}\|_{H^{\frac12}_x L^2_v} \exp \Big( C\| {g_{\rm in}}\|_{H^{\frac12}_x L^2_v }^2 \Big) \, .
	\end{aligned}
	\end{equation}
	By the standard interpolation inequality
	\begin{equation}\label{eq:interpolation}
	{\| h \|_{\widetilde L^4_T H^{n}_x} 
	\lesssim \| h \|_{\widetilde L^\infty_T H^{n-\frac12}_x}^{\frac12} \| h \|_{L^2_T H^{n+\frac12}_x}^{\frac12} \,, \qquad \forall  n \geq \frac12\,,}
	\end{equation}
one also has
\begin{equation}\label{estimateL4}
 \| g^\eps\|_{\widetilde   L^4_TH^{m+\frac12}_x H^{s,*}_v }\lesssim \eps^{ - \alpha(m-\frac12)} \| g_{\rm in}\|_{H^{\frac12}_x L^2_v} \exp \Big( C\|{g_{\rm in}} \|_{H^{\frac12}_x L^2_v }^2 \Big) \, .\end{equation}It is also worth recalling  that $g^\eps = \P_0 g^\eps$ so that $\|g^\eps(t,x,\cdot)\|_{H^{s,*}_v} \lesssim \|g^\eps(t,x,\cdot)\|_{L^2_v}$. 

In what follows, we shall look for a solution $f^\eps$ to \eqref{eq:feps_intro} under the form
	$
	f^\eps =  g^\eps + \delta^\eps.
	$
Since as recalled in~(\ref{Duhamelfluid})
	$$
	g^\eps(t) = U_{\rm NSF} (t)   \P_0 f^\eps_{\rm in} + \Psi_{\rm NSF} [ g^\eps, g^\eps](t)\, ,
	$$	
elementary algebraic computations lead to the following equation on~$\delta^{\eps}$:
	\begin{equation} \label{eq:deltaeps} 
	\begin{aligned}
	\delta^{\eps}(t) 
	&= \big(U^{\eps}(t) -U_{\rm NSF}(t) \big)\P_0f_{\mathrm{in}}^\eps  
	+ U^\eps(t) \P_0^\perp f_{\rm in}^\eps \\
	&\quad 
	+ \Psi^{\eps }[  g^\eps, g^\eps](t)-  \Psi_{\rm NSF} [ g^\eps, g^\eps](t) \\
	&\quad 
	+ 2\Psi^{\eps}[g^\eps,\delta^\eps](t)
	+ \Psi^\eps[\delta^\eps,\delta^\eps](t)\,.
	\end{aligned}
	\end{equation}

As we shall see, the main point is to be able to solve the equation on~$\delta^{\eps}$ although the initial data blows up  (in a controled way) as $\eps \to 0$. Our method of proof will enable us to prove that the equation has a unique solution on the same time interval as~$g^\eps$ hence as~$g$, at least for~$\eps$ small enough. In doing so we shall also prove that $\delta^{\eps}$ converge to~$0$ in~$\widetilde L^\infty_T H^{\frac12}_x L^2_v \cap 
	 L^2_T H^\frac32_x L^2_v.$

The method will rely on the following  fixed point lemma.  
\begin{lem} \label{lem:Picard}
There is a constant~$C_0>0$ such that the following holds. Let~$X$ be a Banach space, ${\mathcal L}$   a  continuous linear map from~$X$ to~$X$,  and ${\mathcal B}$   a bilinear map from~$X\times X$ to~$X$.
Let us define
	\[
	\|{\mathcal L}\|  := \sup_{\|x\|=1} \|{\mathcal L}x\|
	\quad\hbox{and}\quad
	\|{\mathcal B}\| := \sup_{\|x\|=\|y\|=1} \|{\mathcal B}(x,y)\| \, .
	\]
If~$\|{\mathcal L}\| <1$, then for any~$x_{0}$ in~$X$ such that
	\begin{equation}\label{small data}
	\|x_{0}\|_{X}< \frac{(1-\|{\mathcal L}\| )^2} {4\|{\mathcal B}\| } 
	\end{equation}
the equation
	$
	x=x_{0}+{\mathcal L}x+{\mathcal B}(x,x)
	$
has a unique solution in the ball of center~$0$ and radius~$\displaystyle \frac {1-\|{\mathcal L}\| }{2\|{\mathcal B}\|} $  and  there holds~$
	\|x\|\le C _0\|x_0\| \, .
	$
\end{lem}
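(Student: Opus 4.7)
The plan is to apply Banach's contraction mapping theorem to the map $\Phi : X \to X$ defined by
\[
\Phi(x) := x_0 + \mathcal{L}x + \mathcal{B}(x,x),
\]
on a closed ball of suitable radius. Set $R_{*} := (1-\|\mathcal{L}\|)/(2\|\mathcal{B}\|)$ — this is the critical radius appearing in the statement. By bilinearity,
\[
\Phi(x) - \Phi(y) = \mathcal{L}(x-y) + \mathcal{B}(x-y,x) + \mathcal{B}(y,x-y),
\]
so for $\|x\|,\|y\| \leq R$ one has $\|\Phi(x)-\Phi(y)\| \leq (\|\mathcal{L}\| + 2\|\mathcal{B}\| R)\|x-y\|$, which is a strict contraction whenever $R < R_{*}$.

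To arrange that $\Phi$ maps the closed ball $\overline{B}(0,R)$ to itself, one needs $\|x_0\| + \|\mathcal{L}\| R + \|\mathcal{B}\| R^2 \leq R$, i.e.\ the quadratic inequality $\|\mathcal{B}\| R^2 - (1-\|\mathcal{L}\|) R + \|x_0\| \leq 0$. Assumption~\eqref{small data} states exactly that the discriminant $(1-\|\mathcal{L}\|)^2 - 4\|\mathcal{B}\|\|x_0\|$ is strictly positive, so this quadratic has two real roots $R_{-} < R_{*} < R_{+}$, and I choose $R := R_{-}$. With this choice $R_{-} < R_{*}$, so $\Phi$ is both a self-map of $\overline{B}(0,R_{-})$ and a strict contraction on it; Banach's theorem yields a unique fixed point $x$ in that ball.

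It remains to upgrade uniqueness to the whole ball of radius $R_{*}$ and to produce the linear bound $\|x\| \leq C_0 \|x_0\|$. If $\tilde x$ is another solution with $\|\tilde x\| < R_{*}$, the same bilinear identity above gives
\[
\|x - \tilde x\| \leq \bigl(\|\mathcal{L}\| + \|\mathcal{B}\|(\|x\| + \|\tilde x\|)\bigr)\|x-\tilde x\|,
\]
and the coefficient is strictly less than $\|\mathcal{L}\| + 2\|\mathcal{B}\| R_{*} = 1$ (strict because $\|\tilde x\| < R_{*}$ and $\|x\| \leq R_{-} < R_{*}$), forcing $\tilde x = x$. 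For the linear estimate, taking norms in $x = x_0 + \mathcal{L}x + \mathcal{B}(x,x)$ and using $\|x\| \leq R_{-}$, together with the trivial inequality $\|\mathcal{B}\| R_{-} \leq \|\mathcal{B}\| R_{*} = (1-\|\mathcal{L}\|)/2$, gives
\[
\bigl(1 - \|\mathcal{L}\| - \|\mathcal{B}\| R_{-}\bigr)\|x\| \leq \|x_0\|,
\qquad \text{hence}\qquad \|x\| \leq \frac{2}{1-\|\mathcal{L}\|}\,\|x_0\|,
\]
yielding $C_0 = 2/(1-\|\mathcal{L}\|)$ (this is the sense in which $C_0$ is ``a constant'': it depends only on $\|\mathcal{L}\|$, not on $x_0$ or on $\|\mathcal{B}\|$).

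This is a textbook Picard/Banach argument, so no conceptual obstacle is expected; the only genuine care needed is in the bookkeeping of the quadratic inequality — the discriminant condition is exactly \eqref{small data}, and one must track why $R_{-} < R_{*}$ (hence the contraction factor is genuinely $<1$) and why uniqueness extends from the small ball $\overline{B}(0,R_{-})$ to the open critical ball $B(0,R_{*})$.
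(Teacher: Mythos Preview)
Your proof is correct and is the standard Picard/contraction-mapping argument. The paper does not actually supply a proof of this lemma --- it is stated as a known tool and used directly in the proof of Theorem~\ref{theo:main} --- so there is nothing to compare against. Your remark that $C_0 = 2/(1-\|\mathcal L\|)$ necessarily depends on $\|\mathcal L\|$ is accurate (a universal constant is impossible as $\|\mathcal L\|\to 1$); in the paper's application the authors always arrange $\|\mathcal L\|\le 1/2$, so a fixed $C_0$ suffices there. One very minor point: your uniqueness argument in the ball of radius $R_*$ already works on the \emph{closed} ball, since $\|x\|\le R_-<R_*$ alone forces $\|x\|+\|\tilde x\|<2R_*$; you do not need $\|\tilde x\|<R_*$ strictly.
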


In the next sections, we shall provide all the necessary estimates in order to implement this fixed-point argument to solve~\eqref{eq:deltaeps}, which we re-write in the following form:
	$$
	\delta^{\eps}(t) 
	= \mathcal D^{\eps}(t) +  \mathcal S ^{\eps}(t) +  \mathcal L^{\eps}[\delta^{\eps}](t) +  \Psi^{\eps}[\delta^{\eps},\delta^{\eps}]  (t) \, , 
	$$
where the data~$ \mathcal D^{\eps}$, source~$  \mathcal S ^{\eps}$ and linear~$  \mathcal L^{\eps}[\delta^{\eps}]$   terms are defined by
	\begin{equation}\label{eq:def:calDSL}
	\begin{aligned}
	\mathcal D^{\eps}(t)&:= \big(U^{\eps}(t) -U_{\rm NSF}(t) \big)\P_0f_{\mathrm{in}}^\eps + U^\eps(t) \P_0^\perp f_{\rm in}^\eps\, , \\
	\mathcal S ^{\eps}(t) & := \Psi^{\eps }[  g^\eps, g^\eps](t)-  \Psi_{\rm NSF} [ g^\eps, g^\eps](t)  \, , \\
	\mathcal L^{\eps}[\delta^{\eps}](t)   &:=2 \Psi^{\eps}[g^\eps,\delta^{\eps}](t)     \, . 	
	\end{aligned}
	\end{equation}
 Sections~\ref{sec:some results} and~\ref{sec:deltaeps} will be devoted to the proof of the following result.  
\begin{prop}\label{estimates on delta eps}
Under the assumptions of  Theorem~{\rm\ref{theo:main}}, the following holds.
\begin{enumerate}

\item 
For any $t \in (0,T)$ there holds
$$
\| U^\eps(\cdot - t) F(t) \|_{\mathcal X^\eps_{[t,T]}}
\lesssim \| F \|_{\mathcal X^\eps_{[0,t]}}.
$$

\item The data term goes to zero globally in time: 
	$$
	\begin{aligned}
	 \|\mathcal D^{\eps}\|_{\mathcal X^\eps_\infty} 
	\lesssim
 \eps^{\frac12-\alpha} \|{ g_{\rm in}}\|_{H^{\frac12}_x L^2_v}
+ \|\P_0^\perp f_{\rm in}^\eps\|_{H^{\frac12}_xL^2_v} 
+  \eps^\beta \|\P_0^\perp f_{\rm in}^\eps\|_{H^{\ell}_xL^2_v}
	\xrightarrow[\eps \to 0]{}  0  \, .
	\end{aligned}
	$$

\item The source term goes to zero in~$\mathcal X^\eps_T$: there 
exists a nonnegative increasing function~$\Phi$ 
such that 
	$$
	 \|\mathcal S^{\eps}\| _{\mathcal X^\eps_T}  \leq \eps^{\frac12-2\alpha} \, \Phi \Big( \|{ g_{\rm in}}\|_{H^{\frac12}_x L^2_v} \Big) \xrightarrow[\eps \to 0]{}  0   \, .	
	 $$
\item The linear term satisfies the following continuity estimate for~$\eps $ small enough: for all intervals~$I$, 
$$
\| \mathcal L^{\eps}[f] \|_{\mathcal X^\eps_I}
\lesssim \|f \|_{\mathcal X^\eps_I} 
\Big(\| g^\eps \|_{\widetilde L^4_I H^{1}_x L^2_v} + \| g^\eps \|_{L^2_I H^\frac32_x L^2_v} + \eps^{\beta} \|g^\eps \|_{\widetilde L^\infty_I H^{\ell}_x L^2_v}
+ \eps^{\beta} \|g^\eps \|_{L^2_I H^{\ell}_x L^2_v} \Big)	\, .
$$
	
 \item The nonlinear term satisfies the following continuity estimate: for all intervals~$I$, 
	$$
	\| \Psi^{\eps}[f_1,f_2] \|_{\mathcal X^\eps_I}\lesssim \|f_1\|_{\mathcal X^\eps_I}  \|f_2\|_{\mathcal X^\eps_I}\, .
	$$
\end{enumerate}
\end{prop}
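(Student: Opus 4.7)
The plan is to treat the five items by systematically exploiting the spectral decomposition of~$U^\eps$ (and of its Duhamel bilinear counterpart~$\Psi^\eps$) recalled in Section~\ref{sec:prelim}, together with the algebraic relations~$\mathbf{P}_0 \Gamma_{\mathrm{sym}} = 0$ and~$\mathcal{P}_\star^0 \mathbf{P}_0^\perp = \mathbf{P}_0^\perp \mathcal{P}_\star^0 = 0$ (both of which follow from~$\mathrm{Im}\,\mathcal{P}_\star^0 \subset \ker L$ and the pairings in the explicit formulas of Section~\ref{sec:prelim}). These orthogonality relations turn formally singular~$1/\eps$ contributions into small ones and are the structural reason the proof closes.

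For item~(1) the semigroup bound reduces, via~\eqref{decomposition Ueps}--\eqref{decomposition Ueps NSF}, to bounding each of~$U_{\rm NSF}$,~$\widetilde U^\eps_{\rm NSF}$,~$U^{\eps,\flat}_{\rm wave}$ and~$U^{\eps,\sharp}$ separately on the norms entering~$\mathcal X^\eps_I$: the first two behave like heat semigroups (giving the parabolic~$\widetilde L^\infty_t H^m_x \cap L^2_t H^{m+1}_x$ bound), the third is unitary up to the dissipation~$e^{-\nu_{\rm wave}|k|^2 t}$, and the fourth satisfies~\eqref{eq:decayUepssharp}, whose~$L^2_t$ mass is of order~$\eps$ and thereby compensates the~$\eps^{-1/2}$ weight attached to~$\mathbf{P}_0^\perp$ in~$\mathcal X^\eps_I$. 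For item~(2), a direct computation based on the explicit formulas for~$\mathcal P_{\rm wave\pm}^0(k/|k|)$ recalled in Section~\ref{sec:prelim} shows that the well-preparedness conditions~$\mathrm{div}\,\bar u_{\rm in}=0$ and~$\bar\rho_{\rm in}+\bar\theta_{\rm in}=0$ force~$\mathcal P_{\rm wave\pm}^0 g_{\rm in}=0$. Writing~$\mathcal P_\star(\eps k)$ via~\eqref{dec Pstar}, the remaining contributions to~$(U^\eps-U_{\rm NSF})\mathbf{P}_0 f_{\rm in}^\eps$ then carry at least one factor~$\eps|k|$, which on the support of~$\psi(\eps^\alpha|D_x|)$ yields a gain of order~$\eps^{1-\alpha}$; combined with the cost~$\eps^{-\alpha/2}$ of estimating~$g_{\rm in}$ at higher Sobolev level, this gives the advertised~$\eps^{1/2-\alpha}$. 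The~$\mathbf{P}_0^\perp f_{\rm in}^\eps$ contribution is then absorbed directly by item~(1).

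For item~(3), the key observation is that~$\mathbf{P}_0 \Gamma_{\mathrm{sym}} = 0$ kills every leading projector~$\mathcal P_\star^0$ in the decomposition~\eqref{decomposition Psieps 22} of~$\Psi^\eps-\Psi_{\rm NSF}$, so only the correction terms~$\eps|k|\mathcal P_\star^1 + \eps^2|k|^2 \mathcal P_\star^2$ (on the flat part) and~$\Psi^{\eps,\sharp}$ survive. The flat correction gains a factor~$\eps^{1-\alpha}$ from the frequency support of~$g^\eps$, while the sharp part gains~$\eps$ via~\eqref{eq:decayUepssharp} applied as a Young convolution, both compensating the~$1/\eps$ prefactor in~\eqref{defPsieps}. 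Bilinearity forces one to pay, through~\eqref{smallness geps} and~\eqref{estimateL4}, a factor~$\eps^{-\alpha(m-1/2)}$ per factor of~$g^\eps$ placed at~$H^m_x$; the worst book-keeping (one factor measured at~$H^{3/2}_x$) yields the claimed~$\eps^{1/2-2\alpha}$, which is positive precisely because~$\alpha<1/4$.

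For items~(4) and~(5), we split according to the projectors~$\mathbf{P}_0/\mathbf{P}_0^\perp$ applied to the output of~$\Psi^\eps$ and to which part~$U^{\eps,\flat}$ or~$U^{\eps,\sharp}$ contributes. On the flat part, since~$\Gamma_{\mathrm{sym}} \in \mathbf{P}_0^\perp$ only the correction terms of~\eqref{dec Pstar} survive and their~$\eps|k|$ prefactor exactly cancels the~$1/\eps$ of~\eqref{defPsieps}, reducing matters to Navier--Stokes-type bilinear estimates (placing one factor in~$L^\infty_x$ via~$H^\ell\hookrightarrow L^\infty$ for~$\ell>3/2$, the other in the full norm, together with a~$H^{1/2}\cdot H^{3/2}\hookrightarrow H^{1/2}$ product rule for the low-regularity norm). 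On the sharp part,~\eqref{eq:decayUepssharp} produces a convolution of~$L^1_t$-mass~$\eps^2$, which combined with the~$1/\eps$ of~\eqref{defPsieps} and the~$\eps^{-1/2}$ weight on the~$\mathbf{P}_0^\perp$ output gives a uniformly bounded~$\eps$-power. Item~(4) then reduces to item~(5) applied to~$g^\eps$ and~$f$, using~\eqref{estimateL4} and~\eqref{smallness geps}. \textbf{Main obstacle.} The most delicate case is the~$\mathbf{P}_0^\perp$-in/$\mathbf{P}_0^\perp$-out interaction of item~(5), where no~$\mathbf{P}_0\Gamma_{\mathrm{sym}}=0$-type cancellation is available at the level of~$f_1,f_2$ and the singular~$1/\eps$ must be absorbed purely through the sharp-semigroup decay combined with the~$\eps^{-1/2}$ weights of~$\mathcal X^\eps_I$; tracking the precise powers of~$\eps$ and the correct Lebesgue indices in time through this case is the main technical difficulty of the proof.
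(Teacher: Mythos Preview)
Your outline for items~(1), (2), (4), (5) is broadly aligned with the paper's approach (decompose $U^\eps$ and $\Psi^\eps$ via~\eqref{decomposition Ueps}--\eqref{decomposition Psieps 22}, use $\mathcal P_\star^0 \Gamma_{\mathrm{sym}}=0$ and $\mathcal P_\star^0 \mathbf P_0^\perp=0$ to kill the leading order, and close via the hypocoercive/sharp decay bounds of Section~\ref{sec:some results}). The bookkeeping of $\eps$-powers in item~(2) is slightly off (the $\eps^{1/2-\alpha}$ actually comes from pairing a $\sqrt\eps$ gain with the $\eps^{-\alpha}$ cost of one derivative on the truncated datum, not from $\eps^{1-\alpha}\cdot\eps^{-\alpha/2}$), but this is cosmetic.

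There is however a genuine gap in item~(3). You claim that after using $\mathcal P_\star^0\Gamma_{\mathrm{sym}}=0$, only the correction terms $\eps|k|\mathcal P_\star^1+\eps^2|k|^2\mathcal P_\star^2$ survive in $\Psi^\eps-\Psi_{\rm NSF}$. This overlooks the $1/\eps$ prefactor of~\eqref{defPsieps}: after dividing, the $\mathcal P_\star^1$ contribution to $\Psi^{\eps,\flat}$ carries a factor $|k|$, not $\eps|k|$. For $\star\in\{\rm NS,heat\}$ this piece is exactly $\Psi_{\rm NSF}$ and cancels (up to the lower-order corrections you describe), but for $\star\in\{\rm wave\pm\}$ it is \emph{not} subtracted and has no intrinsic power of $\eps$. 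This is the term $\Psi^{\eps1,\flat}_{\rm wave}$ of the paper's decomposition~\eqref{decomposition Psieps 222}, and the frequency localisation of $g^\eps$ alone does not make it small: in absolute value it is of the same order as $\Psi_{\rm NSF}[g^\eps,g^\eps]$ itself.

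The paper extracts smallness here from the fast oscillations $e^{\pm ic|k|(t-t')/\eps}$ via an integration by parts in time (see Lemmas~\ref{lem:source_Psi_eps1_wave_I}--\ref{lem:source_Psi_eps1_wave_J}): one writes $e^{\pm ic|k|(t-t')/\eps}=\frac{\eps}{\mp ic|k|}\partial_{t'}e^{\pm ic|k|(t-t')/\eps}$, producing a factor $\eps$ at the price of boundary terms and a $\partial_{t'}$ on $\Gamma(g^\eps,g^\eps)$. The latter is then controlled through the Navier--Stokes--Fourier equation satisfied by $g^\eps$, which is why the final bound in item~(3) involves $\|\partial_t g^\eps\|$ and, after unpacking, the function $\Phi$ and the power $\eps^{1/2-2\alpha}$. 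Without this non-stationary-phase step the source estimate does not close; you should add it explicitly to your treatment of item~(3).
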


\smallskip

Let us investigate how  Proposition~\ref{estimates on delta eps} ensures the wellposedness of~(\ref{eq:deltaeps}) in~$\mathcal X^\eps_T$ and the convergence of~$\delta^\eps$ to zero, thus proving Theorem~\ref{theo:main}.

\begin{proof}[Proof of Theorem~{\rm\ref{theo:main}}]
We shall check that~(\ref{eq:deltaeps}) takes the form required by Lemma~\ref{lem:Picard}. 
Thanks to Proposition~\ref{estimates on delta eps}--(2),(3) and the assumptions of Theorem~\ref{theo:main} we have
$$
\|\mathcal D^{\eps}\| _{\mathcal X^\eps_\infty} +  \|\mathcal S^{\eps}\| _{\mathcal X^\eps_T}\xrightarrow[\eps \to 0]{} 0 \, .
$$
Due to Proposition~\ref{estimates on delta eps}--(5), (\ref{small data}) will be satisfied as soon as we have a hold on the continuity constant on~$\mathcal L^\eps$: we need  the linear operator~$ \mathcal L^{\eps}$ to be a  a contraction in~$\mathcal X^\eps_T$. As can be seen from Proposition~\ref{estimates on delta eps}--(4) along with~(\ref{smallness geps}) and~(\ref{estimateL4}), 
for that to be the case one needs~${  g_{\rm in}}$ to be small, which we do not assume here. 
  
  In order to get around this difficulty, we shall apply  Lemma~\ref{lem:Picard} iteratively on small time intervals. Note that due to Proposition~\ref{estimates on delta eps}--(4) and~\eqref{smallness geps}, there is a constant~$C>0$ and~$\eps_0>0$  such that for all~$\eps \leq \eps_0$
  $$
  \| \mathcal L^{\eps}[f] \|_{\mathcal X^\eps_T}\leq C \|f \|_{\mathcal X^\eps_T} \Big(\frac1{4C} + \| g \|_{\widetilde L^4_T H^{1}_x L^2_v} + \| g \|_{L^2_T H^\frac32_x L^2_v}  	   \Big)\, .
  $$
Now 
thanks to~(\ref{eq:g_bound}) and~(\ref{eq:interpolation})
there exists~$K>0$ and times~$t_1:=0<t_2<\dots<t_K:=T$ such that
  $$
  \forall 1 \leq i \leq K-1
 \, , \quad  \| g \|_{\widetilde L^4([t_i,t_{i+1} ]; H^{1}_x L^2_v)} 
 + \| g \|_{L^2\big([t_i,t_{i+1} ]; H^\frac32_x L^2_v\big)} \leq \frac1{4C}\, \cdotp$$
 Then in particular
\begin{equation}\label{L contraction 2}
  \| \mathcal L^{\eps}[f] \|_{\mathcal X^\eps_{t_2}} \le \frac 12  \|f \|_{\mathcal X^\eps_{t_2}}\, \cdotp
 \end{equation}
Applying Lemma~\ref{lem:Picard} on~$[0,t_2]$   implies that there is a unique solution~$\delta^\eps$ to~(\ref{eq:deltaeps})  in~$\mathcal X^\eps_{t_2}$, which satisfies
\begin{equation}\label{bound t2}
\|\delta^\eps\|_{\mathcal X^\eps_{t_2}} \leq C_0 \Big(D_{\rm in}^\eps+  \|\mathcal S^{\eps}\| _{\mathcal X^\eps_{t_2}}\Big) \xrightarrow[\eps \to 0]{}0\, ,
\end{equation}
with thanks to Proposition~\ref{estimates on delta eps}--(2)
\begin{equation}\label{limit data3}
D_{\rm in}^\eps:= \eps^{\frac12-\alpha}   \|{g_{\rm in}}\|_{H^{\frac12}_x L^2_v} + \|\P_0^\perp f_{\rm in}^\eps\|_{H^{\frac12}_x L^2_v} +  \eps^{\beta} \|\P_0^\perp f_{\rm in}^\eps\|_{H^{\ell}_xL^2_v} \, .
	\end{equation}
	Then we solve~(\ref{eq:deltaeps}) on~$[t_2,t_3]$. We recall that~(\ref{eq:deltaeps})  writes
$$
\forall t \in [t_2,t_3] \, , \quad \delta^{\eps}(t) = \mathcal D^{\eps}( t) +   \mathcal S ^{\eps } ( t) +  \mathcal L^{\eps}[\delta^{\eps}](t) +  \Psi^{\eps}[\delta^{\eps},\delta^{\eps}]  (t) \, , 
$$
with $\mathcal D^{\eps}$, $\mathcal S^{\eps}$ and $\mathcal L^{\eps}$ defined in \eqref{eq:def:calDSL}.
	We want to recast this equation in a form suited to a fixed point on~$[t_2,t_3]$. 
According to~(\ref{defPsieps}) and since~$U^\eps$ is a semigroup, 
we can write for all~$t \geq t_2$
$$
\begin{aligned}
	\Psi^\eps[f,g] (t) &= \frac{1}{\eps} U^\eps(t-t_2) \int_0^{t_2} U^\eps(t_2-t') \Gamma_{\mathrm{sym}}(f(t') , g(t')) \, \d t' \\
	&\qquad \qquad \qquad \qquad \qquad 
	+  \frac{1}{\eps}  \int_{t_2} ^{t} U^\eps(t-t') \Gamma_{\mathrm{sym}}(f(t') , g(t')) \, \d t' \\
	&=:   \frac{1}{\eps} U^\eps(t-t_2) \int_0^{t_2} U^\eps(t_2-t') \Gamma_{\mathrm{sym}}(f(t') , g(t')) \, \d t' + \Psi^\eps[f,g] (t_2;t)
\, .
\end{aligned}$$
We also define the operator
$$
 \mathcal L^{\eps}[f](t_2;t):=\Psi^{\eps}[ g^\eps,f] (t_2;t)
$$
and we set 
$$
\mathcal D ^\eps_{2}(t):=\mathcal D ^\eps(t) - U^{\eps}(t-t_2)\mathcal D ^\eps(t_2)
 $$
and
$$
\mathcal S ^\eps_{2}(t):= \mathcal S ^\eps(t)  - U^{\eps}(t-t_2)\mathcal S ^\eps(t_2) \, . $$
Then~(\ref{eq:deltaeps}) can be recast on~$[t_2,t_3]$ as follows:
$$
 \delta^{\eps}(t) =U^\eps(t-t_2)  \delta^{\eps}(t_2) +\mathcal D ^\eps_{2}(t)+   \mathcal S ^\eps_{2}(t)   + \mathcal L^{\eps}[\delta^{\eps}](t_2;t) +  \Psi^{\eps}[\delta^{\eps},\delta^{\eps}]  (t_2;t) \, .
$$
Thanks to Proposition~\ref{estimates on delta eps}--(1), (2) and~(3),~$ \mathcal D^{\eps}_2   $  and~$\mathcal S^{\eps}_2$   go  to zero in~$\mathcal X^{\eps}_{[t_2,t_3]}$,  with for some universal constant~$C_1$
$$
\| \mathcal D^{\eps}_2\|_{\mathcal X^\eps_{[t_2,t_3]}}  \leq  C_1D_{\rm in}^\eps
$$
with notation~(\ref{limit data3}), and $$
\| \mathcal S^{\eps}_2\|_{\mathcal X^\eps_{[t_2,t_3]}}  \leq C_1  \eps^{\frac12-2\alpha} \Phi \Big(\|{ g_{\rm in}}\|_{H^{\frac12}_xL^2_v} \Big) 	  .
$$
The linear operator~$\mathcal L^{\eps}[\delta^{\eps}](t_2;t) $ is dealt with exactly as above to produce similarly to~(\ref{L contraction 2}), for~$\eps$ small enough,
$$
 \| \mathcal L^{\eps}[f] \|_{\mathcal X^\eps_{[t_2,t_3]}} \le \frac 12  \|f \|_{\mathcal X^\eps_{[t_2,t_3]}}\, \cdotp
$$
Finally thanks to Proposition~\ref{estimates on delta eps}--(1) and~(\ref{bound t2}), we have for some universal constant~$C_2>0$ that
$$
	\begin{aligned}
  \| U^\eps(t-t_2)  \delta^{\eps}(t_2) \|_{\mathcal X^\eps_{[t_2,t_3]}} & \lesssim \|\mathcal D^{\eps}\| _{\mathcal X^\eps_\infty} +  \|\mathcal S^{\eps}\| _{\mathcal X^\eps_{t_2}}  \\
 & \leq C_2 \Big[ D_{\rm in}^\eps+\eps^{\frac12-2\alpha}   \Phi \left(\|{ g_{\rm in}}\|_{H^{\frac12}_xL^2_v} \right) \Big]  \, .	\end{aligned}
$$
We can therefore apply Lemma~\ref{lem:Picard} which implies that $$
 \begin{aligned}\|\delta^\eps\|_{\mathcal X^\eps_{[t_2,t_3]}} 
&\leq C_0 
 \Big(\|U^\eps(\cdot-t_2)  \delta^{\eps}(t_2)\|_{\mathcal X^\eps_{[t_2,t_3]}}  +\|\mathcal D ^\eps_{2} \|_{\mathcal X^\eps_{[t_2,t_3]}} + \|  \mathcal S ^\eps_{2} \|_{\mathcal X^\eps_{[t_2,t_3]}} \Big)\\
 &\leq C_0 
  (C_1+C_2) \Big[ D_{\rm in}^\eps+ \eps^{\frac12-2\alpha}   \Phi \left(\|{ g_{\rm in}}\|_{H^{\frac12}_xL^2_v} \right) \Big] . 
\end{aligned}
$$
 Iterating this argument~$K$ times and  noticing that~$K$ is of the order of
 $$K(g):= \|g\|_{\widetilde L^4_TH^{1}_xL^2_v} + \|g\|_{L^2_TH^\frac32_x L^2_v} \, , $$we find that there  is a unique solution $\delta^\eps \in \mathcal X^\eps_T$ to~(\ref{eq:deltaeps})  on~$[0,T]$ which satisfies for some universal constant~$C\geq 2$
$$
 \|\delta^\eps\|_{\mathcal X^\eps_{T}} \lesssim C^{K(g)} \left[ D_{\rm in}^\eps+ \eps^{\frac12-2\alpha}   \Phi \left(\|{ g_{\rm in}}\|_{H^{\frac12}_xL^2_v} \right) \right]
 \xrightarrow[\eps \to 0]{} 0 \, .  
$$
Theorem~\ref{theo:main} is proved. 
\end{proof}

\section{Some   results on the operators $U^\eps$ and $\Psi^\eps$}\label{sec:some results}
In this section, we provide useful continuity estimates on~$U^\eps$,~$\Psi^\eps$ and~$\Gamma$.

\subsection{Estimates on $U^\eps$ and $\Psi^\eps$}
The first series of estimates (Propositions~\ref{prop:estimate_Ueps},~\ref{prop:estimate_Uepssharp}, \ref{prop:estimate_Uepsflat} and Corollary~\ref{cor:estimate_Psieps}) are very close to the ones established in~\cite{CC} (and in~\cite{Gervais-Lods}) and are based on hypocoercive energy estimates (see Appendix~\ref{app:hypocoercivity} for a presentation of hypocoercivity results). Since the functional framework is a little different, we reformulate them in our functional setting. Some key elements of proofs are provided in Appendix~\ref{app:hypocoercivity}.

\begin{prop}\label{prop:estimate_Ueps}
Let $m \ge 0$ and~$T>0$. There holds:
\begin{enumerate}

\item Let $f \in H^m_x L^2_v$ and assume $f$ verifies~\eqref{eq:normalization}. Then
	\begin{equation*}
	\| U^\eps (\cdot) f \|_{\widetilde L^\infty_t H^m_x L^2_v } 
		+ \left\|  \P_0 U^\eps (\cdot) f \right\|_{L^2_t H^m_x L^2_v} +\frac{1}{\eps} \| \P_0^\perp  U^\eps (\cdot) f \|_{L^2_t H^m_x H^{s,*}_v} 
	\lesssim \| f \|_{H^m_x L^2_v}\, ,
	\end{equation*}
and moreover $U^\eps (t) f$ verifies~\eqref{eq:normalization} for all $t \ge 0$.

\item Let $S=S(t,x,v)$ satisfy $\P_0 S = 0$ and $S \in L^2_T H^m_x (H^{s,*}_v)'$, then for any~$t \leq T$,
	\begin{multline*}
	\left\| \int_0^t U^\eps (t-t') S(t') \, \d t' \right\|_{\widetilde L^\infty_T H^m_x L^2_v } 
	+  \left\|   \P_0 \int_0^t U^\eps (t-t') S(t') \, \d t' \right\|_{L^2_T H^m_x L^2_v} 
\\
	+\frac{1}{\eps} \left\| \P_0^\perp \int_0^t U^\eps (t-t') S(t') \, \d t' \right\|_{L^2_T H^m_x H^{s,*}_v}	\lesssim \eps \| S \|_{L^2_T H^m_x (H^{s,*}_v)'}\,.
	\end{multline*}

\end{enumerate}

\end{prop}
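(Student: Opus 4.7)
The plan is to work in Fourier variables in $x$ and to apply a hypocoercive energy method designed to produce dissipation in the $H^{s,*}_v$ norm, as gathered in Appendix~\ref{app:hypocoercivity}.

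First I would write $\widehat{f}(t,k,v)$ for the $x$-Fourier transform of $U^\eps(t)f$, which solves $\partial_t \widehat{f} = \eps^{-2} L \widehat{f} - i \eps^{-1}(v\cdot k)\widehat{f}$. The normalization~\eqref{eq:normalization} is equivalent to $\P_0 \widehat{f}(t,0,\cdot)=0$, and its preservation in time follows from the collision invariants in $\Ker L$ together with the vanishing of the transport term at $k=0$. Once frequency-wise bounds in $v$ are established, the norms $\widetilde{L}^\infty_t H^m_x E_v$ and $L^2_t H^m_x E_v$ are recovered by multiplying by $\langle k\rangle^{2m}$ and summing in $k$ (taking the supremum in $t$ before summation for the $\widetilde L^\infty$ piece, and using Fubini for the $L^2_t$ pieces).

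For part~(1), I would introduce an equivalent modified energy
\begin{equation*}
\mathcal{E}^\eps_k(\widehat{f}) := \|\widehat{f}\|^2_{L^2_v} + \eta\,\Re\,\mathcal{M}^\eps_k(\widehat{f}),
\end{equation*}
where $\mathcal{M}^\eps_k$ is a Guo/Mouhot-Neumann type modifier built from the elliptic auxiliary problems associated with the macroscopic moments $\rho[f], u[f], \theta[f]$, designed to exploit the skew-symmetric transport $i(v\cdot k)$ in order to produce macroscopic dissipation. For $\eta>0$ small enough, $\mathcal{E}^\eps_k \sim \|\widehat{f}\|^2_{L^2_v}$ uniformly in $(\eps,k)$, and the coercivity $-\Re\langle Lf,f\rangle_v \gtrsim \|\P_0^\perp f\|^2_{H^{s,*}_v}$ yields a differential inequality of the form
\begin{equation*}
\frac{\d}{\d t}\mathcal{E}^\eps_k(\widehat{f}) + \frac{c}{\eps^2}\|\P_0^\perp \widehat{f}\|^2_{H^{s,*}_v} + c\,\|\P_0\widehat{f}\|^2_{L^2_v} \le 0,
\end{equation*}
where the last term is available uniformly because on $\T^3$ every non-zero Fourier mode satisfies $|k|\ge 1$ and the mode $k=0$ has been killed by the normalization. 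Integrating, weighting by $\langle k\rangle^{2m}$ and summing in $k$ gives all three norms in~(1).

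For part~(2), I would set $h(t) := \int_0^t U^\eps(t-t') S(t')\,\d t'$, which in Fourier satisfies $\partial_t \widehat{h} = \eps^{-2}L\widehat{h} - i\eps^{-1}(v\cdot k)\widehat{h} + \widehat{S}$ with $\widehat{h}(0)=0$. The same modified-energy computation is carried out; the only new contribution is $\Re\langle \widehat{S}, \widehat{h}\rangle_v$ (and an analogous term from $\mathcal{M}^\eps_k$). Because $\P_0 \widehat{S}=0$ by assumption, this reduces to $\Re\langle \widehat{S}, \P_0^\perp \widehat{h}\rangle_v$, which by the duality $(H^{s,*}_v, (H^{s,*}_v)')$ and Young's inequality is bounded by
\begin{equation*}
C\eps^2 \|\widehat{S}\|^2_{(H^{s,*}_v)'} + \frac{c}{2\eps^2}\|\P_0^\perp \widehat{h}\|^2_{H^{s,*}_v},
\end{equation*}
the second term being absorbed into the dissipation. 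The prefactor $\eps$ claimed on the right-hand side of~(2) comes precisely from this Young splitting; integration in $t$, weighting by $\langle k\rangle^{2m}$ and summation in $k$ then conclude.

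The main obstacle is the construction of the modifier $\mathcal{M}^\eps_k$ so as to simultaneously (i) produce dissipation in the anisotropic norm $H^{s,*}_v$, which is non-standard for Boltzmann without cutoff and for Landau, and (ii) remain uniformly bounded in $(\eps,k)$. These ingredients are available in~\cite{CC, Gervais-Lods}; the task here is to repackage them in the $\widetilde L^\infty_t H^m_x$ Fourier-based functional setting of Section~\ref{sect:notation} and to verify that the assumption $\P_0 S=0$ in~(2) is used exactly as above so that the dual norm $(H^{s,*}_v)'$ on the source is sufficient.
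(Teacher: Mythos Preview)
Your proposal is correct and follows essentially the same route as the paper's own proof: Fourier in $x$, a hypocoercive modified energy (the paper writes it as an inner product $\la\!\la \cdot,\cdot\ra\!\ra_{L^2_v} = \la\cdot,\cdot\ra_{L^2_v} + \eps\,\psi_k[\cdot,\cdot]$ with the $\eps$-scaling explicit in front of the macroscopic modifier), the per-$k$ differential inequality with $\eps^{-2}$ microscopic and $O(1)$ macroscopic dissipation, then supremum in time, weighting by $\langle k\rangle^{2m}$ and summation. For part~(2) the paper likewise uses $\P_0 S=0$ so that the standard pairing sees only $\P_0^\perp \widehat h$, and the modifier cross-term with $S$ carries the built-in $\eps$ factor and pairs $\widehat S$ (through its polynomial moments, hence in $(H^{s,*}_v)'$) against $\P_0\widehat h$; both are then absorbed exactly by the Young splitting you describe.
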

From \cite[Lemmas 4.8 and 4.9]{Gervais-Lods} we also have estimates for the semi-group $U^{\eps,\sharp}$.

\begin{prop}\label{prop:estimate_Uepssharp}
Let $m \ge 0$ and $T>0$. There holds: 
\begin{enumerate}

\item Let $f \in H^m_x L^2_v$, then
$$
\| U^{\eps,\sharp} (\cdot) f \|_{\widetilde L^\infty_T H^m_x L^2_v } 
+\frac{1}{\eps} \|   U^{\eps,\sharp} (\cdot) f \|_{L^2_T H^m_x H^{s,*}_v}  
\lesssim \| f \|_{H^m_x L^2_v}\, .
$$

\item Let $S=S(t,x,v)$ satisfy $S \in L^2_T H^m_x (H^{s,*}_v)'$. Then for any $t \leq T$,
	\begin{multline*}
	\left\| \int_0^t U^{\eps,\sharp} (t-t') S(t') \, \d t' \right\|_{\widetilde L^\infty_T H^m_x L^2_v } 
	+\frac{1}{\eps} \left\|  \int_0^t U^{\eps,\sharp} (t-t') S(t') \, \d t' \right\|_{L^2_T H^m_x H^{s,*}_v} \\ 
	\lesssim \eps \| S \|_{L^2_T H^m_x (H^{s,*}_v)'}\,.
	\end{multline*}

\end{enumerate}
\end{prop}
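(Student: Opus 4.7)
The plan is to work on the Fourier side in $x$, where $\widehat U^{\eps,\sharp}(t,k)$ acts diagonally, and to combine the uniform exponential decay \eqref{eq:decayUepssharp} with a hypocoercive energy identity for $\widehat\Lambda^\eps(k)$ restricted to the range of the sharp projector. The key technical input, already present in the hypocoercive estimates recalled in Appendix~\ref{app:hypocoercivity} and the analysis of~\cite{Gervais-Lods}, is that on the sharp subspace (which is orthogonal to the slow spectral modes~$\mathcal P_\star$ for low~$|\eps k|$ and coincides with the whole space for~$|\eps k| \geq \kappa$) the operator~$-L/\eps^2$ provides coercive dissipation in the~$H^{s,*}_v$ norm with constant~$c/\eps^2$, uniformly in~$k \in \Z^3$. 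This is exactly what generates the $\eps^{-1}$ weighted~$L^2_t H^{s,*}_v$ control, and, by duality on the source term, the~$\eps$ prefactor in~(2).

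For part~(1), I would introduce a modified hypocoercive functional~$\mathcal H^\eps_k(\cdot) \simeq \|\cdot\|_{L^2_v}^2$ (uniformly in $k$ and $\eps$), equivalent to the~$L^2_v$ norm on the sharp subspace, such that
\begin{equation*}
\frac{d}{dt}\, \mathcal H^\eps_k\bigl(\widehat U^{\eps,\sharp}(t,k) f\bigr) + \frac{c}{\eps^2} \bigl\|\widehat U^{\eps,\sharp}(t,k) f\bigr\|_{H^{s,*}_v}^2 \leq 0\,.
\end{equation*}
Integrating in time yields simultaneously the pointwise-in-$k$ bounds
\begin{equation*}
\bigl\|\widehat U^{\eps,\sharp}(t,k) f\bigr\|_{L^\infty_T L^2_v}^2 + \frac{1}{\eps^2}\bigl\|\widehat U^{\eps,\sharp}(\cdot,k) f\bigr\|_{L^2_T H^{s,*}_v}^2 \lesssim \|\widehat f(k)\|_{L^2_v}^2\,.
\end{equation*}
Multiplying by $\langle k\rangle^{2m}$ and summing in $k \in \Z^3$ produces the claimed $H^m_x$-level estimate thanks to Parseval and the definition of the Chemin–Lerner norm~$\widetilde L^\infty_T H^m_x L^2_v$. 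Note that, contrary to Proposition~\ref{prop:estimate_Ueps}, no normalization condition on $f$ is needed, since the sharp subspace already lies inside~$(\operatorname{Ker} L)^\perp$ (on the low-frequency window) and the exponential decay~\eqref{eq:decayUepssharp} handles the remaining regime.

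For part~(2), consider $F(t) := \int_0^t U^{\eps,\sharp}(t-t') S(t')\,\d t'$, which solves on the sharp subspace $\partial_t F = \Lambda^{\eps,\sharp} F + S$ with $F(0)=0$. Applying the same hypocoercive functional in Fourier and estimating the source contribution by duality yields
\begin{equation*}
\frac{d}{dt}\, \mathcal H^\eps_k\bigl(\widehat F(t,k)\bigr) + \frac{c}{\eps^2}\bigl\|\widehat F(t,k)\bigr\|_{H^{s,*}_v}^2 \leq \bigl|\langle \widehat S(t,k),\widehat F(t,k)\rangle_v\bigr| \leq \frac{\eps^2}{2c}\|\widehat S(t,k)\|_{(H^{s,*}_v)'}^2 + \frac{c}{2\eps^2}\|\widehat F(t,k)\|_{H^{s,*}_v}^2\,,
\end{equation*}
where the last term is absorbed into the left-hand side. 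Integrating in~$t$, multiplying by~$\langle k\rangle^{2m}$ and summing over~$k$ gives
\begin{equation*}
\|F\|_{\widetilde L^\infty_T H^m_x L^2_v}^2 + \frac{1}{\eps^2}\|F\|_{L^2_T H^m_x H^{s,*}_v}^2 \lesssim \eps^2\, \|S\|_{L^2_T H^m_x (H^{s,*}_v)'}^2\,,
\end{equation*}
which is exactly the claim. The main technical obstacle is the construction of the functional~$\mathcal H^\eps_k$ with dissipation rate genuinely equivalent to~$\eps^{-2}\|\cdot\|_{H^{s,*}_v}^2$ uniformly in~$k$: for low~$|\eps k|$ one must use that the sharp projector kills the slow hydrodynamic eigendirections so that~$-L$ is coercive on the remaining subspace, while for~$|\eps k| \geq \kappa$ one relies on the uniform spectral gap encoded in~\eqref{eq:decayUepssharp}. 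Since both regimes are treated in Lemmas~4.8–4.9 of~\cite{Gervais-Lods}, the only remaining work is to transcribe the pointwise-in-$k$ bounds into the Chemin–Lerner framework used here, which is routine.
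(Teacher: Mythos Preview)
Your approach is correct and matches what the paper does: the paper does not give a proof but simply cites Lemmas~4.8--4.9 of~\cite{Gervais-Lods}, and your sketch correctly identifies the hypocoercive energy method on the sharp subspace as the underlying mechanism, ultimately deferring to the same reference for the main technical input. The transcription to the Chemin--Lerner framework is indeed routine and is carried out exactly as you describe (and as in the proof of Proposition~\ref{prop:estimate_Ueps} in Appendix~\ref{app:hypocoercivity}).

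One small imprecision worth flagging: the range of the sharp projector does \emph{not} lie inside $(\Ker L)^\perp$ on the low-frequency window. The spectral projectors $\mathcal P_\star(\eps k)$ differ from $\mathcal P_\star^0(k/|k|)$ by $O(\eps|k|)$ corrections (see~\eqref{dec Pstar}), so $\P_0$ applied to a sharp-subspace element need not vanish, and ``$-L$ is coercive on the remaining subspace'' is not literally true. What \emph{is} true, and what you actually need, is that $\widehat\Lambda^\eps(k)$ restricted to the sharp subspace has a spectral gap of order $\eps^{-2}$ uniformly in $k$ (this is the content of~\eqref{eq:decayUepssharp}). Combining the equivalent norm associated with this gap (which gives $\eps^{-2}\|f\|_{L^2_v}^2$ dissipation of the whole function) with the plain $L^2_v$ energy identity (which gives $\eps^{-2}\|\P_0^\perp f\|_{H^{s,*}_v}^2$ dissipation), and using $\|\P_0 f\|_{H^{s,*}_v} \lesssim \|\P_0 f\|_{L^2_v}$, produces a functional $\mathcal H^\eps_k$ with exactly the dissipation inequality you wrote. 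So the argument goes through once the justification is rephrased this way.
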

From the two previous propositions, since $\Gamma_{\mathrm{sym}}$ is such that $\P_0 \Gamma_{\mathrm{sym}} = 0$, it is straightforward to deduce the following result.
\begin{cor} \label{cor:estimate_Psieps}
Consider $m \geq 0$, $f_1$, $f_2$ such that $\Gamma_{\mathrm{sym}}(f_1,f_2) \in L^2_T H^m_x (H^{s,*}_v)'$ for some given~$T>0$. Then, there holds:
	\begin{multline*}
	\|\Psi^\eps[f_1,f_2]\|_{\widetilde L^\infty_T H^m_x L^2_v} 
	+ \left\|  \P_0 \Psi^\eps[f_1,f_2] \right\|_{L^2_T H^m_x L^2_v} 
\\
	+\frac{1}{\eps} \left\| \P_0^\perp \Psi^\eps[f_1,f_2] \right\|_{L^2_T H^m_x H^{s,*}_v}  	\lesssim  \| \Gamma_{\mathrm{sym}}(f_1,f_2) \|_{L^2_T H^m_x (H^{s,*}_v)'}
	\end{multline*}
and
	\[
	\|\Psi^{\eps,\sharp}[f_1,f_2]\|_{\widetilde L^\infty_T H^m_x L^2_v} 
	+\frac{1}{\eps} \| \Psi^{\eps,\sharp}[f_1,f_2] \|_{L^2_T H^m_x H^{s,*}_v} 
	\lesssim  \| \Gamma_{\mathrm{sym}}(f_1,f_2) \|_{L^2_T H^m_x (H^{s,*}_v)'}\,.
	\]
\end{cor}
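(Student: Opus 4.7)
The plan is to read both estimates as direct consequences of Proposition~\ref{prop:estimate_Ueps}(2) and Proposition~\ref{prop:estimate_Uepssharp}(2), applied to the source term $S(t) := \Gamma_{\mathrm{sym}}(f_1(t), f_2(t))$. The only algebraic fact that needs to be verified before invoking those propositions is the identity $\P_0 \Gamma_{\mathrm{sym}}(f_1, f_2) = 0$: by definition of $\Gamma$ one has $\la \Gamma_{\mathrm{sym}}(f_1,f_2), \varphi \mu^{\frac12}\ra_{L^2_v} = \int Q_{\mathrm{sym}}(\mu^{\frac12}f_1, \mu^{\frac12}f_2)\,\varphi\,\d v$, which vanishes for $\varphi \in \{1, v, |v|^2\}$ by the conservation laws~\eqref{prop:conserv1}, hence $\Gamma_{\mathrm{sym}}(f_1, f_2)$ is orthogonal to $\operatorname{Ker} L$ as described in~\eqref{eq:Ker L}. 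This is exactly the assumption needed in Proposition~\ref{prop:estimate_Ueps}(2).

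For the first estimate, by the definition~\eqref{defPsieps} we have
\[
\int_0^t U^\eps(t-t')\, \Gamma_{\mathrm{sym}}(f_1(t'), f_2(t')) \, \d t' = \eps \, \Psi^\eps[f_1, f_2](t).
\]
Substituting this identity into the bound furnished by Proposition~\ref{prop:estimate_Ueps}(2) and dividing by $\eps$, the prefactor $\eps$ on the right-hand side cancels cleanly with the $1/\eps$ built into $\Psi^\eps$, and the weighting $1/\eps$ in front of the $\P_0^\perp$--norm is inherited unchanged. This yields exactly the first claim. The second claim is obtained analogously: by~\eqref{decomposition Psieps 11} one has
\[
\Psi^{\eps, \sharp}[f_1, f_2](t) = \frac{1}{\eps}\int_0^t U^{\eps,\sharp}(t-t')\, \Gamma_{\mathrm{sym}}(f_1(t'), f_2(t')) \, \d t',
\]
so Proposition~\ref{prop:estimate_Uepssharp}(2) with the same source $S$ and the same division by $\eps$ produces the desired bound. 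No hypothesis on $\P_0 S$ intervenes here, which is consistent with the exponential decay~\eqref{eq:decayUepssharp} of $\widehat U^{\eps,\sharp}$ on the whole of~$L^2_v$.

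No genuine obstacle arises: the corollary is a pure reformulation and the only things to watch are (i) the cancellation $\P_0 \Gamma_{\mathrm{sym}} = 0$, which justifies applying Proposition~\ref{prop:estimate_Ueps}(2) with no hydrodynamic piece on the source side, and (ii) the bookkeeping of the $\eps$-weights so that the $1/\eps$ factor appearing in front of the microscopic norm on the left-hand side matches the one already present in Propositions~\ref{prop:estimate_Ueps}(2) and~\ref{prop:estimate_Uepssharp}(2).
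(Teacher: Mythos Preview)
Your proposal is correct and matches the paper's approach exactly: the paper simply states that the corollary follows immediately from Propositions~\ref{prop:estimate_Ueps} and~\ref{prop:estimate_Uepssharp} together with the fact that $\P_0 \Gamma_{\mathrm{sym}} = 0$, and you have spelled out precisely this deduction, including the $\eps$-bookkeeping and the verification of $\P_0 \Gamma_{\mathrm{sym}} = 0$ via the conservation laws.
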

{The  above statements show that there is some kind of smoothing effet in the velocity variable after time integration. However there  is no such effect in the space variable in general, except when it comes to the operator~$ U^{\eps,\flat} $, as shown in the following}
 straighforward estimate for $U^{\eps,\flat}$.
\begin{prop}\label{prop:estimate_Uepsflat}
Let $m \ge 0$. For any $f \in H^m_x L^2_v$ there holds
$$
\|   U^{\eps,\flat} (\cdot) f \|_{L^2_t H^{m+1}_x H^{s,*}_v}  
\lesssim \| f \|_{H^m_x L^2_v}\, .
$$
\end{prop}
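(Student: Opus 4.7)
The plan is to apply Plancherel in~$x$ and estimate each Fourier mode separately, using the spectral decomposition of~$\widehat U^{\eps,\flat}$ from~\eqref{decomposition Uepsflat} together with the eigenvalue estimates~\eqref{NS heat eigenvalues}--\eqref{wave eigenvalues}. The key point is that the gain of one spatial derivative will come entirely from integrating the parabolic factor~$e^{-c|k|^2 t}$ in time, which produces a factor~$1/|k|^2$.

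More precisely, for~$k\in\Z^3$ on the support of the cutoff~$\chi(\eps|\cdot|/\kappa)$, i.e.\ $|\eps k|\le\kappa$, the bounds~\eqref{NS heat eigenvalues}--\eqref{wave eigenvalues} yield $\operatorname{Re}\lambda_\star(\eps k)\le -\tfrac{\nu_\star}{2}|\eps k|^2$, so that
$$
\bigl|e^{\lambda_\star(\eps k)\,t/\eps^2}\bigr|\le e^{-c|k|^2 t}
$$
for $c:=\tfrac12\min_\star \nu_\star>0$, uniformly in~$\eps$. Combined with the uniform boundedness of $\mathcal P_\star(\eps k)\colon L^2_v\to H^{s,*}_v$ for $|\eps k|\le\kappa$ (noted right after~\eqref{dec Pstar}; the continuous embedding $L^2_v\hookrightarrow (H^{s,*}_v)'$ used here is guaranteed by the standing restriction $\gamma+2s\ge 0$), this gives
$$
\bigl\|\widehat{U^{\eps,\flat}(t)f}(k,\cdot)\bigr\|_{H^{s,*}_v}\lesssim e^{-c|k|^2 t}\,\|\hat f(k,\cdot)\|_{L^2_v}.
$$

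For~$k\ne 0$ we integrate in time, $\int_0^\infty e^{-2c|k|^2 t}\,\d t=1/(2c|k|^2)$, and conclude via Plancherel in~$x$:
$$
\|U^{\eps,\flat}f\|_{L^2_t H^{m+1}_x H^{s,*}_v}^2
\lesssim \sum_{k\ne 0}\frac{\la k\ra^{2(m+1)}}{|k|^2}\,\|\hat f(k,\cdot)\|_{L^2_v}^2
\lesssim \|f\|_{H^m_x L^2_v}^2,
$$
using $\la k\ra^{2(m+1)}/|k|^2\lesssim \la k\ra^{2m}$ for~$|k|\ge 1$. The only mild subtlety is the $k=0$ mode, which does not decay since $\lambda_\star(0)=0$; however~\eqref{decomposition Uepsflat} combined with~\eqref{remarkP0} reduces its contribution to $\chi(0)\,\mathbf P_0\hat f(0,\cdot)$, which vanishes under the normalization~\eqref{eq:normalization} in force whenever this proposition is applied. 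No step is genuinely delicate: the result is a direct repackaging of the spectral data recalled in Section~\ref{sec:prelim}, with parabolic smoothing converted into a spatial gain through time integration.
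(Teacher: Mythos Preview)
Your proof is correct and is precisely the ``straightforward'' argument the paper alludes to without giving details: Plancherel in~$x$, the exponential bound $\bigl|e^{\lambda_\star(\eps k)t/\eps^2}\bigr|\le e^{-c|k|^2 t}$ from~\eqref{NS heat eigenvalues}--\eqref{wave eigenvalues}, the uniform boundedness of the projectors into $H^{s,*}_v$, and time integration to extract the factor~$|k|^{-2}$. Your remark on the $k=0$ mode is the only point requiring care, and your handling of it via~\eqref{remarkP0} and the normalization~\eqref{eq:normalization} is appropriate, since in every application of this proposition the input indeed satisfies~\eqref{eq:normalization}.
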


  \subsection{Refined estimates on $\Psi^\eps$}
We recall that as introduced in Section~\ref{sec:prelim}
\begin{equation}\label{decomposition Psieps 111}
\Psi^{\eps}   = \Psi^{\eps,\flat}  
+\Psi^{\eps,\sharp}  \ .
\end{equation}
In what follows, we give estimates on~$\Psi^{\eps,\flat}  $ and~$\Psi^{\eps,\sharp}$.

\begin{prop} \label{prop:Psiepsflat}
Consider $T>0$ and~$m \in \R$. For any smooth enough functions $f_1$ and~$f_2$, we have
\begin{equation} \label{eq:estimate_Psiepsflat_Linfty}
\begin{aligned}
\big\| \Psi^{\eps,\flat} [f_1,f_2]\big\|_{\widetilde L^\infty_T H^{m}_x L^2_v}
\lesssim \| \Gamma_{\mathrm{sym}}(f_1,f_2) \|_{\widetilde L^\infty_T H^{m-1}_x L^2_v} \, ,
\end{aligned}
\end{equation}
\begin{equation} \label{eq:estimate_Psiepsflat_Linfty2}
\begin{aligned}
\big\| \Psi^{\eps,\flat} [f_1,f_2]\big\|_{\widetilde L^\infty_T H^{m}_x L^2_v}
\lesssim \| \Gamma_{\mathrm{sym}}(f_1,f_2) \|_{\widetilde L^4_T H^{m-\frac12}_x L^2_v} \, ,
\end{aligned}
\end{equation}
and also
\begin{equation} \label{eq:estimate_Psiepsflat1}
\begin{aligned}
\big\| \Psi^{\eps,\flat} [f_1,f_2]\big\|_{L^2_T H^{m}_x H^{s,*}_v}
\lesssim \| \Gamma_{\mathrm{sym}}(f_1,f_2) \|_{L^2_T H^{m-1}_x (H^{s,*}_v)'} \, .
\end{aligned}
\end{equation}

\end{prop}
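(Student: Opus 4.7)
The proof will be carried out in Fourier space in $x$, relying on one crucial algebraic observation: for each mode $\star \in \{\mathrm{NS},\mathrm{heat},\mathrm{wave}\pm\}$, one has $\mathcal P_\star^0(k/|k|)\widehat \Gamma_{\mathrm{sym}}(k) = 0$. Indeed, the explicit formulas recalled after~\eqref{dec Pstar} show that each $\mathcal P_\star^0$ factors through $\mathbf P_0$ (it takes an $L^2_v$-pairing against an element of $\operatorname{Ker} L$ and returns an element of $\operatorname{Ker} L$), while $\mathbf P_0 \Gamma_{\mathrm{sym}} = 0$. Inserting the expansion~\eqref{dec Pstar} into the definition~\eqref{decomposition Psieps 12} of $\Psi^{\eps,\flat}$ therefore reduces the integrand to $(\eps|k|\mathcal P_\star^1 + |\eps k|^2 \mathcal P_\star^2)\widehat \Gamma_{\mathrm{sym}}$, and the singular prefactor $1/\eps$ is absorbed into one power of $|k|$; the cutoff $\chi(\eps|k|/\kappa)$ ensures that the remaining factor $\eps|k|$ coming from the quadratic contribution is bounded by the constant $\kappa$, so that this second term is no worse than the first.

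Next, combining the real-part bounds $\operatorname{Re}\lambda_\star(\eps k)/\eps^2 \le -c|k|^2$ from~\eqref{NS heat eigenvalues}--\eqref{wave eigenvalues} (valid on $\operatorname{supp}\chi(\eps\cdot/\kappa)$) with the uniform boundedness of $\mathcal P_\star^1(k/|k|)$ and $\mathcal P_\star^2(\eps k)$ from $(H^{s,*}_v)'$ into $H^{s,*}_v$ recalled after~\eqref{dec Pstar} yields the pointwise-in-$k$ bound
\begin{equation*}
\big\|\widehat{\Psi^{\eps,\flat}[f_1,f_2]}(t,k)\big\|_{E_v} \lesssim |k|\int_0^t e^{-c|k|^2(t-t')} \big\|\widehat \Gamma_{\mathrm{sym}}(f_1,f_2)(t',k)\big\|_{E_v'}\, \d t'\,,
\end{equation*}
valid both for $(E_v,E_v') = (L^2_v,L^2_v)$ and for $(E_v,E_v')=(H^{s,*}_v,(H^{s,*}_v)')$; the case $k=0$ is trivial thanks to the prefactor $|k|$. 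From this uniform bound the three estimates follow via three different time-norm manipulations. For~\eqref{eq:estimate_Psiepsflat_Linfty}, bounding the $L^1_{t'}$ norm of $e^{-c|k|^2(t-t')}$ by $|k|^{-2}$ produces an overall factor $|k|^{-1}$ accounting for exactly one spatial derivative; for~\eqref{eq:estimate_Psiepsflat_Linfty2}, Hölder in time with exponents $(4/3,4)$ and $\|e^{-c|k|^2\cdot}\|_{L^{4/3}(\R^+)}\lesssim |k|^{-3/2}$ yields a half-derivative gain; for~\eqref{eq:estimate_Psiepsflat1}, Young's convolution inequality applied in the time variable once more gives the factor $|k|^{-2}$. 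In each case, summing $\sum_k \langle k\rangle^{2m}(\cdots)$ and using $\langle k\rangle^{2m}|k|^{-2}\lesssim \langle k\rangle^{2(m-1)}$ (valid since $|k|\ge 1$ for nonzero $k\in\Z^3$) concludes, after noting that the classical Bochner space $L^2_T H^m_x H^{s,*}_v$ coincides with its Chemin--Lerner analogue $\widetilde L^2_T H^m_x H^{s,*}_v$.

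The only non-routine step is the vanishing identity $\mathcal P_\star^0 \widehat\Gamma_{\mathrm{sym}}=0$: it is what trades the otherwise fatal prefactor $1/\eps$ for a spatial derivative, and explains why $\Psi^{\eps,\flat}$ enjoys the same smoothing as the Navier--Stokes heat semigroup $\Psi_{\rm NSF}$ built from~\eqref{decomposition Psieps 22}. Everything else is a standard heat-kernel pointwise-in-$k$ argument combined with Hölder or Young in time.
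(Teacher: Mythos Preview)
Your proof is correct and follows essentially the same approach as the paper's: both exploit $\mathbf P_0\Gamma_{\mathrm{sym}}=0$ to kill the $\mathcal P_\star^0$ contribution in~\eqref{dec Pstar}, thereby trading the singular $1/\eps$ for a factor $|k|$ (with the cutoff $\chi(\eps|k|/\kappa)$ controlling the quadratic remainder), and then reduce to a heat-kernel convolution estimate in time via Young's inequality ($L^1_T\star L^\infty_T$, $L^{4/3}_T\star L^4_T$, $L^1_T\star L^2_T$ respectively), using the boundedness of $\mathcal P_\star^{1,2}$ either on $L^2_v$ or from $(H^{s,*}_v)'$ to $H^{s,*}_v$ according to the target norm. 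The only cosmetic difference is that you package the three time estimates into a single pointwise-in-$k$ inequality with a generic pair $(E_v,E_v')$, whereas the paper writes out the $L^2_v$ and $H^{s,*}_v$ cases separately.
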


\begin{proof}
Recalling~(\ref{decomposition Psieps 12}), for any $ k \in \Z^3$, there holds
	\begin{align*}
	&\widehat{\Psi^{\eps ,\flat}} [f_1,f_2](t, k) \\
	&\qquad
	 =\frac1\eps\chi \Big(\frac{\eps | k|}\kappa\Big)
	\sum_{\star\in \{\rm NS, heat,wave\pm\}}\int_0^t  e^{ \lambda_\star( \eps  k) \frac{t-t'}{\eps^2} }
	\mathcal P_\star(\eps  k) \widehat \Gamma_{\mathrm{sym}} \big(f_1(t'),f_2(t')\big) ( k) \, \d t' \,.
	\end{align*}
	Due to the form~(\ref{NS heat eigenvalues})-(\ref{wave eigenvalues}) of~$\lambda_\star$ and to the fact that~$\mathbf P_0 \Gamma_{\mathrm{sym}} = 0$, there is a constant~$\lambda_1>0$ such that
	$$
 \begin{aligned}
&\Big \|	 \widehat{\Psi^{\eps ,\flat}} [f_1,f_2](t, k) 
\Big \|_{L^2_v }\\
&\qquad \lesssim  | k|  \chi \Big(\frac{\eps | k|}\kappa\Big)
\int_0^t  e^{-  \lambda_1 | k|^2  {(t-t')} }\Big \|\mathcal P^1\Big({ k \over | k|}\Big) \widehat \Gamma_{\mathrm{sym}} (f_1,f_2) (t', k) \Big \|_{L^2_v}\, \d t' \\
&\qquad \quad  + \eps  | k|^2  \chi \Big(\frac{\eps | k|}\kappa\Big)
\int_0^t  e^{-  \lambda_1 | k|^2  {(t-t')} }\Big \|\mathcal P^2(\eps k) \widehat \Gamma_{\mathrm{sym}} (f_1,f_2) (t', k) \Big \|_{L^2_v}\, \d t'\\
&\qquad
\lesssim  | k|  \chi \Big(\frac{\eps | k|}\kappa\Big)
\int_0^t  e^{-  \lambda_1 | k|^2  {(t-t')} }\Big \|\mathcal P^1\Big({ k \over | k|}\Big) \widehat \Gamma_{\mathrm{sym}} (f_1,f_2) (t', k) \Big \|_{L^2_v }\, \d t' \\
&\qquad \quad  + | k|  \chi \Big(\frac{\eps | k|}\kappa\Big)
\int_0^t  e^{-  \lambda_1 | k|^2  {(t-t')} }\Big \|\mathcal P^2(\eps k) \widehat \Gamma_{\mathrm{sym}} (f_1,f_2) (t', k) \Big \|_{L^2_v }\, \d t'
	 \end{aligned}
$$
where we used that $\eps  k$ lies in a compact set to get the last inequality and where~$\mathcal P^1$ and~$\mathcal P^2$ are   bounded from $L^2_v$ into~$L^2_v $ uniformly in $\eps | k| \leq \kappa$. We then have 
	\begin{align*}
	\Big \|\mathcal P^{1}\Big({ k \over | k|}\Big) 
	\widehat \Gamma_{\mathrm{sym}} (f_1, f_2) (t', k) \Big \|_{L^2_v} + \Big \|\mathcal P^{2}(\eps k) 
	\widehat \Gamma_{\mathrm{sym}} (f_1, f_2) (t', k) \Big \|_{L^2_v} \\
	\lesssim  \big \| \widehat \Gamma_{\mathrm{sym}} (f_1, f_2) (t', k,\cdot) \big \|_{L^2_v }\, .
	\end{align*}
We denote $A(t',k) :=  \| \widehat \Gamma_{\mathrm{sym}} (f_1, f_2) (t', k,\cdot) \|_{L^2_v }$, and we use the fact that Young's inequality in time implies~$L^1_T \star L^\infty_T\subset L^\infty_T$ to estimate
	\[
	\| \widehat{\Psi^{\eps,\flat}} [f_1,f_2] (k) \|_{L^\infty_T  L^2_v}
	\lesssim 
	\left\|  \int_0^t  |k|^2 e^{-\lambda_1(t-t') | k|^2}  
	    | k|^{-1} A(t',k,\cdot)\, \d t'  \right\|_{L^\infty_T} 
	 \lesssim  \big\| |k|^{-1} A(\cdot,k) \big\|_{L^\infty_T}\,.
	\]
Therefore we obtain
	\[
	\| \Psi^{\eps,\flat} [f_1,f_2] \|_{\widetilde L^\infty_T H^{m}_x L^2_v}
	\lesssim 
	\left\|    
	 \langle   k\rangle ^{m-1} A  \right\|_{\ell^2_ k L^\infty_T} 
	\lesssim  \| \Gamma_{\mathrm{sym}} (f_1, f_2)  \|_{\widetilde L^\infty_T H^{m-1}_x L^2_v }	\,,
	\]
which concludes the proof of \eqref{eq:estimate_Psiepsflat_Linfty}. Using instead Young's inequality in time~$L^{4/3}_T \star L^4_T\subset L^\infty_T$, we also obtain 
	\[
	\| \widehat{\Psi^{\eps,\flat}} [f_1,f_2] (k) \|_{L^\infty_T  L^2_v}
	\lesssim 
	\left\|  \int_0^t  |k|^{\frac32} e^{-\lambda_1(t-t') | k|^2}  
	    | k|^{-\frac12} A(t',k,\cdot)\, \d t'  \right\|_{L^\infty_T} 
	 \lesssim  \big\| |k|^{-\frac12} A(\cdot,k) \big\|_{L^4_T}\,,
	\]
from which we deduce \eqref{eq:estimate_Psiepsflat_Linfty2} arguing as before.

For the $L^2_T$ estimate \eqref{eq:estimate_Psiepsflat1}, we observe that $\mathcal P^1$ and~$\mathcal P^2$ are   bounded from $(H^{s,*}_v)' $ into~$H^{s,*}_v $ uniformly in $\eps | k| \leq \kappa$. Therefore we obtain, denoting $B(t',k) :=  \| \widehat \Gamma_{\mathrm{sym}} (f_1, f_2) (t', k,\cdot) \|_{(H^{s,*}_v)' }$ and using Young's inequality in time~$L^1_T \star L^2_T\subset L^2_T$, that
	\[
	\| \widehat{\Psi^{\eps,\flat}} [f_1,f_2] (k) \|_{L^2_T  H^{s,*}_v}
	\lesssim 
	\left\|  \int_0^t  |k|^2 e^{-\lambda_1(t-t') | k|^2}  
	    | k|^{-1} B(t',k,\cdot)\, \d t'  \right\|_{L^2_T} 
	 \lesssim  \big\| |k|^{-1} B(\cdot,k) \big\|_{L^2_T}\,.
	\]
We then conclude the proof of \eqref{eq:estimate_Psiepsflat1} by arguing as before.  Proposition~\ref{prop:Psiepsflat}
is proved.
\end{proof}
We now give an estimate on $\Psi^{\eps,\sharp}$ that will be useful when both entries are macroscopic. 
\begin{prop} \label{prop:Psiepssharp}
Consider $T>0$ and $m \in \R $. For any smooth enough functions $f_1$ and~$f_2$ we have:
	\[
	\big\| \Psi^{\eps,\sharp} [f_1,  f_2] \big\|_{\widetilde L^\infty_T H^{m}_x L^2_v}
	\lesssim \eps \| \Gamma_{\mathrm{sym}} ( f_1 ,  f_2) \|_{\widetilde L^\infty_T H^m_x L^2_v} \,. 	
	\]
\end{prop}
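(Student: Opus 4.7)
The plan is to work in Fourier space with respect to the variable $x$, and exploit the exponential decay estimate~\eqref{eq:decayUepssharp} on $\widehat U^{\eps,\sharp}$ directly; no spectral fine structure is needed here (unlike for $\Psi^{\eps,\flat}$). Starting from the definition~\eqref{decomposition Psieps 11}, for each fixed $k \in \Z^3$, the bound $\|\widehat U^{\eps,\sharp}(t-t',k)\|_{L^2_v \to L^2_v} \lesssim e^{-\lambda_0 (t-t')/\eps^2}$ yields
$$
\big\| \widehat{\Psi^{\eps,\sharp}}[f_1,f_2](t,k) \big\|_{L^2_v}
\lesssim \frac{1}{\eps} \int_0^t e^{-\lambda_0(t-t')/\eps^2} \big\| \widehat{\Gamma_{\mathrm{sym}}}(f_1,f_2)(t',k) \big\|_{L^2_v} \, \d t'.
$$

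Next, I bound the $L^2_v$ norm of $\widehat{\Gamma_{\mathrm{sym}}}(f_1,f_2)(t',k)$ by its $L^\infty$ norm in $t' \in [0,T]$ and pull it out of the time integral. This is a $L^1_t \star L^\infty_t \subset L^\infty_t$ Young-type estimate: the remaining integral $\int_0^\infty e^{-\lambda_0 s/\eps^2}\,\d s = \eps^2/\lambda_0$, which combined with the $1/\eps$ prefactor produces an overall factor of $\eps$. Hence, pointwise in $k$,
$$
\big\| \widehat{\Psi^{\eps,\sharp}}[f_1,f_2](\cdot,k) \big\|_{L^\infty_T L^2_v}
\lesssim \eps \, \big\| \widehat{\Gamma_{\mathrm{sym}}}(f_1,f_2)(\cdot,k) \big\|_{L^\infty_T L^2_v}.
$$
Multiplying by $\langle k \rangle^{2m}$, summing over $k \in \Z^3$ and taking the square root gives the stated inequality by the very definition of the $\widetilde L^\infty_T H^m_x L^2_v$ norm.

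I do not anticipate a serious obstacle. The whole content of the estimate is the uniform exponential decay~\eqref{eq:decayUepssharp} of the sharp part of the semigroup: there is no gain of spatial regularity (the index $m$ appears unchanged on both sides), in contrast with the flat part treated in Proposition~\ref{prop:Psiepsflat} where the prefactor $|k|$ coming from the projectors~\eqref{dec Pstar} transfers one derivative in $x$. The only small care needed is to note that since we are estimating an $L^\infty$ norm in time, we use the full half-line integral $\int_0^\infty$ rather than $\int_0^T$, which is harmless and delivers the clean factor $\eps^2/\lambda_0$ that is the source of the final $\eps$.
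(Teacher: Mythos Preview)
Your proof is correct and follows essentially the same approach as the paper: work at fixed $k$, apply the exponential decay~\eqref{eq:decayUepssharp}, use Young's inequality $L^1_t \star L^\infty_t \subset L^\infty_t$ to extract the factor $\eps$, then sum in $k$ against $\langle k\rangle^{2m}$ to recover the $\widetilde L^\infty_T H^m_x L^2_v$ norm. The paper's proof is identical in content and structure.
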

\begin{proof}
We first write for any $ k \in \Z^3$,
	\begin{equation*}
	\Big\|\widehat \Psi^{\eps,\sharp} [ f_1,  f_2] (k)\Big\|_{L^\infty_T L^2_v}
	\lesssim
	\frac{1}{\eps} 
	\left\| \int_0^t \Big\|\widehat U^{\eps,\sharp}(t-t', k) \widehat \Gamma_{\mathrm{sym}}( f_1,  f_2 )(t', k) \Big\|_{L^2_v} \, \d t' \right\|_{L^\infty_T}  \,.
	\end{equation*}
Using then~\eqref{eq:decayUepssharp}, we deduce that 
	\begin{equation*}
	\Big\|\widehat \Psi^{\eps,\sharp} [ f_1,  f_2] (k)\Big\|_{L^\infty_T L^2_v}
	\lesssim
	\frac{1}{\eps}
	\left\| \int_0^t e^{-\lambda_0 \frac{t-t'}{\eps^2}} \Big\|\widehat \Gamma_{\mathrm{sym}} ( f_1,  f_2)(t', k)\Big\|_{L^2_v} \, \d t' \right\|_{L^\infty_T} \,,
	\end{equation*}
and thus Young's inequality in time yields
	\[
	\Big\|\widehat \Psi^{\eps,\sharp}[ f_1,   f_2]( k)\Big\|_{L^\infty_T L^2_v} \\
	\lesssim \eps
	 \Big\|\widehat \Gamma_{\mathrm{sym}} ( f_1,  f_2)(k)\Big\|_{L^\infty_T L^2_v}\,. 
	\] 
This concludes the proof of Proposition~\ref{prop:Psiepssharp}. \end{proof}

\subsection{Nonlinear estimates} 
We now provide nonlinear estimates that are central to estimate the nonlinear collisional operator $\Gamma$ in various functional spaces.
It is well-known   (see~\cite{MR2013332} for cutoff Boltzmann, \cite{GS,AMUXY5} for non-cutoff Boltzmann, \cite{MR1946444} for Landau) that
	$$
	\big | \la \Gamma(f_1,f_2) , f_3 \ra_{L^2_v}\big | \lesssim \| f_1 \|_{L^2_v} \| f_2 \|_{H^{s,*}_v} \| f_3 \|_{H^{s,*}_v}\,,
	$$
from which we  obtain
	\begin{equation}\label{eq:nonlinear_Hs*v'}
	\| \Gamma(f_1,f_2) \|_{(H^{s,*}_v)'} 
	:= \sup_{\| \phi \|_{H^{s,*}_v} \le 1} \la \Gamma(f_1,f_2), \phi \ra_{L^2_v} \lesssim  \| f_1 \|_{L^2_v} \| f_2 \|_{H^{s,*}_v}\,.
	\end{equation}

\begin{prop}\label{prop:nonlinear}
Let $m\ge 0$. For any $r_1,r_2 \neq  3/2 $, any~$ p_1, q_1 , p_2 , q_2 \in [1,\infty]$ that are such that~$1/p_1+1/q_1 = 1/p_2+1/q_2 = 1/2$, and any smooth enough functions $f_1$, $f_2$ there holds: 
$$
\begin{aligned}
\| \Gamma (f_1,f_2) \|_{L^2_T H^{m}_x (H^{s,*}_v)'} 
&\lesssim 
\| f_1 \|_{\widetilde L^{p_1}_T H^{m+(\frac32-r_1)_+}_x L^2_v} \| f_2 \|_{\widetilde L^{q_1}_T  H^{r_1}_x H^{s,*}_v} \\
&\quad
+\| f_1 \|_{\widetilde L^{p_2}_T  H^{r_2}_x L^2_v} \| f_2 \|_{\widetilde L^{q_2}_T H^{m+(\frac32-r_2)_+}_x H^{s,*}_v} \,	.
\end{aligned}
$$
\end{prop}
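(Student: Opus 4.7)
The plan is to reduce the statement to the classical Sobolev product estimate on $\T^3$, the bilinear pointwise bound \eqref{eq:nonlinear_Hs*v'} playing the role of the pointwise multiplicative bound $|uv|\le |u||v|$. The key observation is that for every $x\in\T^3$,
\[
\|\Gamma(f_1,f_2)(x,\cdot)\|_{(H^{s,*}_v)'}\lesssim \|f_1(x,\cdot)\|_{L^2_v}\,\|f_2(x,\cdot)\|_{H^{s,*}_v}\,,
\]
so the bilinear operator $(f_1,f_2)\mapsto \Gamma(f_1,f_2)$, viewed as a map on $v$-valued functions on $\T^3$, inherits all the continuity properties of ordinary multiplication, with $L^2_v$ playing the role of scalars for the first argument and $H^{s,*}_v$ (resp.\ $(H^{s,*}_v)'$) for the second argument (resp.\ the target).

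First I would introduce a dyadic Littlewood--Paley decomposition $\operatorname{Id}=\sum_{j\ge -1}\Delta_j$ in the $x$ variable and perform Bony's paraproduct decomposition
\[
\Gamma(f_1,f_2)=T^\Gamma_{f_1}f_2+T^\Gamma_{f_2}f_1+R^\Gamma(f_1,f_2)\,,
\]
where $T^\Gamma_{f_1}f_2:=\sum_j\Gamma(S_{j-1}f_1,\Delta_j f_2)$ and $R^\Gamma$ is the usual remainder, all defined pointwise in $v$ by applying $\Gamma$ to the frequency-localized $x$-pieces (this is licit because $\Delta_j$ and $S_j$ are convolutions in~$x$ only). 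For the paraproduct $T^\Gamma_{f_1}f_2$, the $\Delta_j$-block is essentially frequency-localized at $2^j$, so standard almost-orthogonality gives
\[
\|T^\Gamma_{f_1}f_2\|_{H^m_x(H^{s,*}_v)'}^2\lesssim \sum_j 2^{2jm}\|\Gamma(S_{j-1}f_1,\Delta_j f_2)\|_{L^2_x(H^{s,*}_v)'}^2\,,
\]
and \eqref{eq:nonlinear_Hs*v'} applied pointwise in $x$ bounds each summand by $\|S_{j-1}f_1\|_{L^\infty_x L^2_v}^2\|\Delta_j f_2\|_{L^2_x H^{s,*}_v}^2$.

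The low-frequency factor is then absorbed via Bernstein plus Sobolev embedding: for $r_2>3/2$ one has $\|S_{j-1}f_1\|_{L^\infty_x L^2_v}\lesssim \|f_1\|_{H^{r_2}_x L^2_v}$ uniformly in $j$, and for $r_2<3/2$,
\[
\|S_{j-1}f_1\|_{L^\infty_x L^2_v}\lesssim 2^{j(\frac32-r_2)}\|f_1\|_{H^{r_2}_x L^2_v}\,,
\]
the endpoint $r_2=3/2$ being excluded for exactly this reason. Multiplying this factor into $2^{jm}\|\Delta_j f_2\|_{L^2_x H^{s,*}_v}$ produces a dyadic weight $2^{j(m+(\frac32-r_2)_+)}\|\Delta_j f_2\|_{L^2_x H^{s,*}_v}$ and squaring/summing gives exactly the second half of the claimed estimate, with a norm in $H^{m+(\frac32-r_2)_+}_x$ on $f_2$. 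The symmetric treatment of $T^\Gamma_{f_2}f_1$ and of the remainder $R^\Gamma(f_1,f_2)$ (whose blocks are localized at frequencies $\gtrsim 2^j$, so they are summed with an extra gain in regularity, distributed the same way) yields the first half and finishes the paraproduct analysis.

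To pass to time-dependent norms, I would perform these estimates dyadic block by dyadic block, so the $L^\infty_x$ bound on the low-frequency factor is in $L^{p_i}_T$ and the $L^2_x H^{s,*}_v$ bound on the high-frequency factor is in $L^{q_i}_T$; applying Hölder in time with $1/p_i+1/q_i=1/2$ at each frequency and then summing in $\ell^2_j$ yields the Chemin--Lerner norms on the right-hand side, using crucially that $\widetilde L^2_T H^m_x=L^2_T H^m_x$ by Minkowski. The main technical obstacle is a clean bookkeeping in the remainder term $R^\Gamma(f_1,f_2)$: its $j$-th dyadic block is no longer frequency-localized at $2^j$ but only at frequencies $\gtrsim 2^j$, so the almost-orthogonality argument must be replaced by the standard trick of writing $\|R^\Gamma(f_1,f_2)\|_{H^m_x(H^{s,*}_v)'}\le \sum_{j'\ge j-N}\|\Delta_j\Gamma(\Delta_{j'}f_1,\widetilde\Delta_{j'}f_2)\|_{\cdots}$ and using Young's inequality in the frequency variable, which is where the restriction $r_1,r_2\neq 3/2$ is required to ensure summability.
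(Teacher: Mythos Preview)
Your proposal is correct and carries out the proof via the standard Bony paraproduct machinery, whereas the paper takes a more elementary direct route in Fourier space, following \cite[Lemma~7.3]{MR3469428}. Concretely, the paper bounds $\|\widehat\Gamma(f_1,f_2)(k)\|_{L^2_T(H^{s,*}_v)'}$ by a discrete convolution $\sum_n F_1(k-n)F_2(n)$ (after Minkowski in~$n$ and~\eqref{eq:nonlinear_Hs*v'} applied mode by mode), then splits the convolution sum into $|n|<|k-n|$ and $|n|\ge|k-n|$ and closes with Cauchy--Schwarz plus the elementary lattice sum $\sum_{|n|<|n'|}\la n\ra^{-2r}\lesssim\la n'\ra^{(3-2r)_+}$, which is precisely where $r\neq 3/2$ enters. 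Your high-low/low-high/diagonal paraproduct splitting is the dyadic refinement of this same dichotomy, and your Bernstein step replaces the lattice sum; both approaches naturally produce the Chemin--Lerner norms once H\"older in time is applied frequency by frequency. The paper's version is shorter and avoids setting up Littlewood--Paley on the torus, while your version makes the parallel with scalar product laws completely transparent and would generalise more readily (e.g.\ to Besov-type spaces or to the whole space~$\R^3$).
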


\begin{proof}
To simplify we write $F_1(t,k) = \| \hat f_1(t,k, \cdot) \|_{L^2_v}$ and $F_2(t,k) = \| \hat f_2(t,k,\cdot) \|_{H^{s,*}_v}$.
By~\eqref{eq:nonlinear_Hs*v'} we have, for any $k \in \Z^3$,
$$
\begin{aligned}
\| \widehat \Gamma (f_1,f_2) (k) \|_{L^2_T (H^{s,*}_v)'}
&\lesssim \Big\{ \int_0^T \Big( \sum_{n \in \Z^3} F_1(t,k-n) F_2(t,n) \Big)^2 \d t \Big\}^{\frac12} ,
\end{aligned}
$$
and applying Minkowski's inequality yields
$$
\| \widehat \Gamma (f_1,f_2) (k) \|_{L^2_T (H^{s,*}_v)'}
\lesssim \sum_{n \in \Z^3}
 \left\{ \int_0^T  |F_1(t,k-n)|^2 |F_2(t,n)|^2 \,  \d t \right\}^{\frac12} .
$$
We now follow \cite[Lemma~7.3]{MR3469428}. We first split
$$
\| \widehat \Gamma (f_1,f_2) (k) \|_{L^2_T (H^{s,*}_v)'}
\lesssim I_1(k) + I_2(k)
$$
with
$$
I_1(k) = \sum_{n \in \Z^3}
 \mathbf{1}_{|n| < |k-n|}
 \left\{ \int_0^T  |F_1(t,k-n)|^2 |F_2(t,n)|^2 \,  \d t \right\}^{\frac12}
$$
and
$$
I_2(k) = \sum_{n \in \Z^3}
 \mathbf{1}_{|n| \le |k-n|}
\left\{ \int_0^T  |F_1(t,n)|^2 |F_2(t,k-n)|^2 \,  \d t \right\}^{\frac12}.
$$
We now estimate the term $I_1$. Thanks to H\"older's inequality in time, we obtain
\begin{equation}\label{eq:I1_FLp_GLq}
I_1 (k)
\lesssim \sum_{n \in \Z^3}
 \mathbf{1}_{|n| < |k-n|}
 \| F_1(\cdot,k-n) \|_{L^{p_1}_T} \| F_2(\cdot, n) \|_{L^{q_1}_T} , 
\end{equation}
where $1/p_1+1/q_1 = 1/2$. To simply notation we introduce $\mathcal{F}_1 (k) = \| F_1(\cdot,k) \|_{L^{p_1}_T} $ and~$\mathcal{F}_2(k) = \| F_2(\cdot, k) \|_{L^{q_1}_T}$. By the Cauchy-Schwarz inequality it follows that
$$
\begin{aligned}
I_1(k) \lesssim \| \la \cdot \ra^{r_1} \mathcal{F}_2 \|_{\ell^2(\Z^3)} \left\{  \sum_{n \in \Z^3}
 \mathbf{1}_{|n| < |k-n|}  \la n \ra^{-2r_1} \mathcal{F}_1(k-n)^2 \right\}^{\frac12}
\end{aligned}
$$
where we recall that $r_1 \neq 3/2$. 
Multypliying $I_1(k)$ by $\la k \ra^{m}$ then taking the square and summing it gives
$$
\begin{aligned}
\sum_{k \in \Z^3} \la k \ra^{2m} I_1(k)^2
\lesssim \| \la \cdot \ra^{r_1} \mathcal{F}_2 \|_{\ell^2(\Z^3)}^2   \sum_{k \in \Z^3}  \sum_{n \in \Z^3}  
 \mathbf{1}_{|n| < |k-n|} \la k \ra^{2m}  \la n \ra^{-2r_1} \mathcal{F}_1(k-n)^2\,.
\end{aligned}
$$
Using that $\mathbf{1}_{|n| < |k-n|} \la k \ra^{2m} \lesssim \mathbf{1}_{|n| < |k-n|} \la k-n \ra^{2m}$, the above sum can be bounded by
$$
\sum_{n' \in \Z^3} \left\{ \sum_{n \in \Z^3} \mathbf{1}_{|n| < |n'|} \la n \ra^{-2r_1} \right\} \la n' \ra^{2m} \mathcal{F}_1(n')^2  
$$
and we observe by standard arguments that
$$
\sum_{n \in \Z^3} \mathbf{1}_{|n| < |n'|} \la n \ra^{-2r_1} \lesssim 
\begin{cases}
\la n' \rangle^{3-2r_1}  \; &\text{if} \quad r_1 < \frac32 \,, \\
1 \; &\text{if} \quad r_1> \frac32\,.
\end{cases}
$$
This implies
$$
\begin{aligned}
\sum_{k \in \Z^3} \la k \ra^{2m} I_1(k)^2
&\lesssim \| \la \cdot \ra^{r_1} \mathcal{F}_2 \|_{\ell^2(\Z^3)}^2   \| \la \cdot \ra^{m+(\frac32-r_1)_+} \mathcal{F}_1 \|_{\ell^2(\Z^3)}^2 \\
&= \| f_1 \|_{\widetilde{L}^{p_1}_T H^{m+(\frac32-r_1)_+}_x L^2_v}^2 \| f_2 \|_{\widetilde{L}^{q_1}_T  H^{r_1}_x H^{s,*}_v}^2.
\end{aligned}
$$
The term $I_2$ can be estimated in a similar fashion, by exchanging the role of $f_1$ and $f_2$. Indeed, we first apply H\"older's inequality in time with $1/p_2+1/q_2 = 1/2$ to obtain
\begin{equation}\label{eq:I1_FLp2_GLq2}
I_2 (k)
\lesssim \sum_{n \in \Z^3}
 \mathbf{1}_{|n| \le |k-n|}
 \| F_1(\cdot,n) \|_{L^{p_2}_T} \| F_2(\cdot, k-n) \|_{L^{q_2}_T} . 
\end{equation}
Denoting $\mathcal{F}'_1 (k) = \| F_1(\cdot,k) \|_{L^{p_2}_T} $ and $\mathcal{F}'_2(k) = \| F_2(\cdot, k) \|_{L^{q_2}_T}$, the Cauchy-Schwarz inequality yields
$$
\begin{aligned}
I_2(k) \lesssim \| \la \cdot \ra^{r_2} \mathcal{F}'_1 \|_{\ell^2(\Z^3)} \left\{  \sum_{n \in \Z^3}
 \mathbf{1}_{|n| \le |k-n|}  \la n \ra^{-2r_2} \mathcal{F}'_2(k-n)^2 \right\}^{\frac12}.
\end{aligned}
$$
Arguing as above, it follows
$$
\begin{aligned}
\sum_{k \in \Z^3} \la k \ra^{2m} I_2(k)^2
\lesssim \| f_1 \|_{\widetilde{L}^{p_2}_T  H^{r_2}_x L^2_v}^2 \| f_2 \|_{\widetilde{L}^{q_2}_T  H^{m+(\frac32-r_2)_+}_x H^{s,*}_v}^2,
\end{aligned}
$$
which completes the proof.
\end{proof}

We give another estimate on $\Gamma$ in the specific case where both entries are macroscopic (which in particular implies that there is no loss of regularity in the velocity variable). 

\begin{prop} \label{prop:nonlinear_macro}
Let $m \ge 0$. For any smooth enough functions~$f_1$,~$f_2$ we have: 
\begin{enumerate}
\item For any $ r_1,r_2 \neq 3/2 $ there holds
	\[
	\begin{aligned}
	\|\Gamma (\mathbf P_0 f_1, \mathbf P_0 f_2) \|_{\widetilde L^\infty_T H^{m}_x L^2_v} 
	&\lesssim \| \mathbf P_0 f_1 \|_{\widetilde L^\infty_T H^{m+(\frac32-r_1)_+}_x L^2_v} \| \mathbf P_0 f_2 \|_{\widetilde L^\infty_T H^{r_1}_x L^2_v}\\
	&\quad
	+ \| \mathbf P_0 f_1 \|_{\widetilde L^\infty_T H^{r_2}_x L^2_v} \| \mathbf P_0 f_2 \|_{\widetilde L^\infty_T H^{m+(\frac32-r_2)_+}_x L^2_v}\,. 
	\end{aligned}
	\]
	
\item For any $ r_1,r_2 \neq 3/2 $ and $p_1,q_1,p_2,q_2 \in [1,\infty]$ such that $1/p_1+1/q_1=1/p_2+1/q_2=1/4$, there holds
	\[
	\begin{aligned}
	\|\Gamma (\mathbf P_0 f_1, \mathbf P_0 f_2) \|_{\widetilde L^4_T H^{m}_x L^2_v} 
	&\lesssim \| \mathbf P_0 f_1 \|_{\widetilde L^{p_1}_T H^{m+(\frac32-r_1)_+}_x L^2_v} \| \mathbf P_0 f_2 \|_{\widetilde L^{q_1}_T H^{r_1}_x L^2_v}\\
	&\quad
	+ \| \mathbf P_0 f_1 \|_{\widetilde L^{p_2}_T H^{r_2}_x L^2_v} \| \mathbf P_0 f_2 \|_{\widetilde L^{q_2}_T H^{m+(\frac32-r_2)_+}_x L^2_v}\,. 
	\end{aligned}
	\]

\end{enumerate}
\end{prop}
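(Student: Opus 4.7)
The plan is to follow very closely the argument already carried out for Proposition~\ref{prop:nonlinear}, but exploiting the crucial fact that when both entries are macroscopic, the collision operator enjoys a much better bilinear estimate in the velocity variable: since $\mathbf{P}_0 f_i$ is of the form $P_i(x,v)\mu^{\frac12}(v)$ with $P_i$ polynomial of degree at most $2$ in $v$, one checks directly (separately in each of the three models) that
\[
\| \Gamma(\mathbf{P}_0 f_1, \mathbf{P}_0 f_2)\|_{L^2_v} \lesssim \| \mathbf{P}_0 f_1\|_{L^2_v}\,\|\mathbf{P}_0 f_2\|_{L^2_v}\,,
\]
with no loss in the $H^{s,*}_v$ norm. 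This replaces the use of~\eqref{eq:nonlinear_Hs*v'} in the proof of Proposition~\ref{prop:nonlinear}. I would record this pointwise (in~$x$) bound as a short preliminary step, the argument being the same decomposition of~$P_i$ on the basis of~$\operatorname{Ker}L$ together with the fact that Maxwellian weights absorb any polynomial growth produced by the collision integrals.

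Once this is in hand, the Fourier-side estimate becomes, for any $k \in \mathbb{Z}^3$,
\[
\|\widehat{\Gamma}(\mathbf{P}_0 f_1, \mathbf{P}_0 f_2)(t,k)\|_{L^2_v} \lesssim \sum_{n \in \mathbb{Z}^3} F_1(t,k-n)\,F_2(t,n)\,,
\]
where now $F_i(t,k) := \|\widehat{\mathbf{P}_0 f_i}(t,k,\cdot)\|_{L^2_v}$. For part~(1), I would simply take the $L^\infty_T$ norm directly: since $L^\infty_T \cdot L^\infty_T \subset L^\infty_T$, no Hölder-in-time step is needed, and one lands on
\[
\|\widehat{\Gamma}(\mathbf{P}_0 f_1, \mathbf{P}_0 f_2)(k)\|_{L^\infty_T L^2_v} \lesssim \sum_{n \in \mathbb{Z}^3} \|F_1(\cdot,k-n)\|_{L^\infty_T}\,\|F_2(\cdot,n)\|_{L^\infty_T}\,.
\]
From this point the proof is the verbatim repetition of the convolution/Cauchy--Schwarz argument of Proposition~\ref{prop:nonlinear}: split the sum according to $|n|<|k-n|$ versus $|n|\leq |k-n|$, use $\mathbf{1}_{|n|<|k-n|}\langle k\rangle^{2m}\lesssim \langle k-n\rangle^{2m}$ to transfer the outer weight to the larger frequency, apply Cauchy--Schwarz with the parameter $r_1$ (resp.~$r_2$) in the second factor, and bound the residual sum $\sum_{|n|<|n'|} \langle n\rangle^{-2r_1}$ by $\langle n'\rangle^{(3-2r_1)_+}$ (this is where $r_1\neq 3/2$ is used).

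For part~(2), the only modification is that after passing to the Fourier side one applies Hölder in time with exponents $(p_1,q_1)$ (resp.\ $(p_2,q_2)$) satisfying $1/p_i+1/q_i=1/4$ before summing in $n$, exactly as in~\eqref{eq:I1_FLp_GLq} and~\eqref{eq:I1_FLp2_GLq2}. The rest of the argument is unchanged and produces the claimed $\widetilde L^4_T H^m_x L^2_v$ bound.

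The only genuinely new ingredient, and the one to be careful about, is the pointwise-in-$x$ bilinear bound $\|\Gamma(\mathbf{P}_0 f_1,\mathbf{P}_0 f_2)\|_{L^2_v}\lesssim \|\mathbf{P}_0 f_1\|_{L^2_v}\|\mathbf{P}_0 f_2\|_{L^2_v}$ uniformly in the three collision models; everything else is a direct replay of the proof of Proposition~\ref{prop:nonlinear} with the $H^{s,*}_v$ norm replaced by the $L^2_v$ norm throughout.
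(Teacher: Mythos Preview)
Your proposal is correct and matches the paper's own proof essentially line for line: the paper also isolates the pointwise bilinear bound $\|\Gamma(\mathbf P_0 f_1,\mathbf P_0 f_2)\|_{L^2_v}\lesssim\|\mathbf P_0 f_1\|_{L^2_v}\|\mathbf P_0 f_2\|_{L^2_v}$ as the only new ingredient (justified there via the regularization estimates of \cite{Strain,CRT} together with $\|\langle v\rangle^p\mathbf P_0\phi\|_{H^q_v}\lesssim\|\mathbf P_0\phi\|_{L^2_v}$, which is equivalent to your finite-dimensional argument), and then explicitly refers back to the convolution/Cauchy--Schwarz machinery of Proposition~\ref{prop:nonlinear} for both parts, with H\"older in time at exponent $1/4$ for part~(2).
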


\begin{proof}
Using the regularization properties of $\P_0$, thanks to \cite{Strain,CRT} respectively for the noncutoff Boltzmann and Landau equations, and the fact that $\| \la v \ra^p \mathbf P_0 \phi \|_{H^{q}_v } \lesssim \| \mathbf P_0 \phi \|_{L^2_v}$ for all~$p,q \ge 0$, we have
\begin{equation}\label{eq:Gamma_L2v}
\| \Gamma (\mathbf P_0 f_1 , \mathbf P_0 f_2) \|_{L^2_v } \lesssim \| \mathbf P_0 f_1 \|_{L^2_v} \| \mathbf P_0 f_2 \|_{L^2_v} \,.
\end{equation}
Therefore we have for any $ k \in \Z^3$, 
	$$
	\| \widehat \Gamma (\mathbf P_0 f_1, \mathbf P_0 f_2) ( k) \|_{L^\infty_T L^2_v} 
	\lesssim   \sum_{ n  \in \Z^3} 
	\| \widehat{\mathbf P_0 f_1}(k- n ) \|_{L^\infty_T L^2_v} \| \widehat{\mathbf P_0 f_2} (n) \|_{L^\infty_T L^2_v} \,.
	$$
We then conclude the proof of~(1) as in the proof of Proposition~\ref{prop:nonlinear}. The proof of~(2) is similar by writing
$$
\begin{aligned}
	\| \widehat \Gamma (\mathbf P_0 f_1, \mathbf P_0 f_2) ( k) \|_{L^4_T L^2_v} 
	&\lesssim   \sum_{ n  \in \Z^3} \mathbf{1}_{|n|<|k-n|}
	\| \widehat{\mathbf P_0 f_1}(k- n ) \|_{L^{p_1}_T L^2_v} \| \widehat{\mathbf P_0 f_2} (n) \|_{L^{q_1}_T L^2_v} \\
&\quad
+ \sum_{ n  \in \Z^3} \mathbf{1}_{|k-n|\ge n}
	\| \widehat{\mathbf P_0 f_1}(n ) \|_{L^{p_2}_T L^2_v} \| \widehat{\mathbf P_0 f_2} (k-n) \|_{L^{q_2}_T L^2_v} \,,
\end{aligned}
	$$	
and then arguing as in the proof of Proposition~\ref{prop:nonlinear}.
\end{proof}

\section{The equation on $\delta^{\eps}$: proof of Proposition~\ref{estimates on delta eps}} \label{sec:deltaeps}

This section is devoted to the proof of Proposition~\ref{estimates on delta eps}.

\subsection{Continuity estimates for $U^\eps$}\label{sct:continuity}
Let us prove  Proposition~\ref{estimates on delta eps}--(1). From Proposition~\ref{prop:estimate_Ueps} we have
$$
\eps^\beta \| U^\eps(\cdot - t) F(t) \|_{\widetilde L^\infty_{[t,T]} H^\ell_x L^2_v}
\lesssim \eps^\beta \| F(t) \|_{H^\ell_x L^2_v} \,,
$$
and $$
\| U^\eps(\cdot - t) F(t) \|_{\widetilde L^\infty_{[t,T]} H^{\frac12}_x L^2_v} \lesssim  \| F(t) \|_{H^{\frac12}_x L^2_v} \, .
$$
Let us turn to the~$L^2$ norm in time. First we note that
$$ 
\frac{\eps^\beta}{\sqrt\eps} \| \mathbf P_0^\perp U^\eps(\cdot - t) F(t) \|_{ L^2_{[t,T]} H^\ell_x H^{s,*}_v} 
\lesssim \eps^{\beta+\frac12} \| F(t) \|_{H^\ell_x L^2_v} \,,
$$
and $$
\frac{1}{\sqrt\eps}\| \P_0^\perp U^\eps(\cdot - t) F(t) \|_{L^2_{[t,T]} H^\frac32_x H^{s,*}_v} \lesssim \sqrt\eps \| F(t) \|_{H^\frac32_x L^2_v}\,.
$$
We now decompose $\P_0 U^\eps = \P_0 U^{\eps,\sharp} + \P_0 U^{\eps,\flat}$ as in~(\ref{decomposition Ueps}). By Proposition~\ref{prop:estimate_Uepssharp}--(1) we get
$$
\eps^\beta \| \P_0 U^{\eps,\sharp}(\cdot - t) F(t) \|_{L^2_{[t,T]} H^\ell_x H^{s,*}_v} \lesssim \eps^{1+\beta} \| F(t) \|_{H^\ell_x L^2_v}\,,
$$
and also
$$
\| \P_0 U^{\eps,\sharp}(\cdot - t) F(t) \|_{L^2_{[t,T]} H^\frac32_x H^{s,*}_v} \lesssim \eps \| F(t) \|_{H^\frac32_x L^2_v}\,.
$$
Furthermore, Proposition~\ref{prop:estimate_Uepsflat} yields
$$
\eps^\beta \| \P_0 U^{\eps,\flat}(\cdot - t) F(t) \|_{L^2_{[t,T]} H^\ell_x H^{s,*}_v} \lesssim  \eps^\beta \| F(t) \|_{H^{\ell-1}_x L^2_v} \, ,
$$
as well as
$$
\| \P_0 U^{\eps,\flat}(\cdot - t) F(t) \|_{L^2_{[t,T]} H^\frac32_x H^{s,*}_v} \lesssim  \| F(t) \|_{H^{\frac12}_x L^2_v}\, .
$$
Gathering the previous estimates and using that $\beta<1/2$ and~$\ell>3/2$, it follows that
$$
\begin{aligned}
&\| U^\eps(\cdot - t) F(t) \|_{\mathcal X^\eps_{[t,T]}} \\
&\qquad
\lesssim 
\eps^\beta \| F(t) \|_{H^\ell_x L^2_v} 
+ \sqrt\eps \| F(t) \|_{H^\frac32_x L^2_v} 
+ \eps^{\beta} \| F(t) \|_{H^{\ell-1}_x L^2_v}
+ \| F(t) \|_{H^{\frac12}_x L^2_v} \\
&\qquad
\lesssim \eps^\beta \| F \|_{\widetilde L^\infty_{[0,t]} H^\ell_x L^2_v} + \| F \|_{\widetilde L^\infty_{[0,t]} H^{\frac12}_x L^2_v} \lesssim \| F \|_{\mathcal X^\eps_{[0,t]}} \,.
\end{aligned}
$$
This concludes the proof of Proposition~\ref{estimates on delta eps}--(1).

\subsection{Contribution of the data $\mathcal D^{\eps}$}\label{sct:data}
Let us prove  Proposition~\ref{estimates on delta eps}--(2). Recall that
\begin{equation}\label{decDepsproof}
\mathcal D^{\eps}(t)= \big(U^{\eps}(t) -U_{\rm NSF}(t) \big)\P_0f_{\mathrm{in}}^\eps + U^\eps(t) \P_0^\perp f_{\rm in}^\eps \, .
\end{equation}
Let us first prove that
\begin{equation}\label{limit data1}
\big\|  \big(U^{\eps}(\cdot) -U_{\rm NSF}(\cdot) \big)\P_0f_{\mathrm{in}}^\eps  \big\| _{\mathcal X^\eps_\infty}
\lesssim
\eps^{\frac12-\alpha}
  \|{ g_{\rm in}}\|_{ H^{\frac12}_x L^2_v} \, .
\end{equation}
We start that by recalling that by~(\ref{decomposition Ueps}) and~(\ref{decomposition Ueps1}) there holds
$$
\begin{aligned}
 \big(U^{\eps}(t) -U_{\rm NSF}(t) \big)\P_0f_{\mathrm{in}}^\eps& = \big(  {\widetilde U^{\eps}_{\rm NSF}}+ U^{\eps,\flat}_{\rm wave}	 + U^{\eps,\sharp}\big)(t)  \P_0f_{\mathrm{in}}^\eps   \, .
\end{aligned}
$$
We notice that since~$  \P_0f_{\mathrm{in}}^\eps $ is well-prepared, then~$U^{\eps,\flat}_{\rm wave}	(t)  \P_0f_{\mathrm{in}}^\eps\equiv0$ so
$$
 \big(   {\widetilde U^{\eps}_{\rm NSF}}+ U^{\eps,\flat}_{\rm wave}	 + U^{\eps,\sharp}\big)(t)  \P_0f_{\mathrm{in}}^\eps =  \big(  \widetilde U^{\eps}_{\rm NSF}	 + U^{\eps,\sharp}\big)(t)    \P_0f_{\mathrm{in}}^\eps\, .
$$
Let us start by considering~$ U^{\eps,\sharp}(t) \P_0f_{\mathrm{in}}^\eps$. Thanks to Proposition~\ref{prop:estimate_Uepssharp}--(1) we have 
$$
\| U^{\eps ,\sharp}(\cdot)  \P_0 f_{\mathrm{in}}^\eps \|_{L^2_tH^m_x H^{s,*}_v}
\lesssim \eps \|   \P_0f_{\mathrm{in}}^\eps\|_{ H^{m}_x L^2_v}
$$
for any $m \ge 0$. To deal with the~$\widetilde L^\infty$ norm in time, we shall   follow the arguments of~\cite{GT} (see in particular the proofs of Lemmas 3.3 and~3.5). We notice as in~\cite[Lemma~6.2]{Bardos-Ukai} that
$$
U^{\eps ,\sharp}(t )f = U^\eps(t)U^{\eps ,\sharp}(0 ) f= U^\eps(t) \Big[\mathcal{F}_x^{-1} \Big({\rm Id}-\chi\Big (\frac{\eps| k|}\kappa\Big)\sum_{\star \in \{\rm NS, heat, wave\pm\}}  \mathcal P_\star(\eps  k)  \Big){\widehat f ( k)} \Big]
$$
so in particular
$$U^{\eps,\sharp}(t )\mathbf P_0f_{\mathrm{in}}^\eps= U^\eps(t) \Bigg[\mathcal{F}_x^{-1} \Bigg( \Big({\rm Id}-\chi\Big (\frac{\eps| k|}\kappa\Big)\Big) - \eps | k| \chi\Big (\frac{\eps| k|}\kappa\Big)\sum_{\star \in \{\rm NS, heat, wave\pm\}}  \mathcal P_\star(\eps  k)\Bigg) \widehat{\mathbf P_0f_{\mathrm{in}}^\eps }( k)\Bigg] \,.
$$
The $ H^{m}_x L^2_v$-norm of the first term in the right-hand side can be estimated using  
\begin{equation}\label{estimatechi}
  \Big|\chi\Big (\frac{\eps| k|}\kappa\Big)-1\Big| \lesssim \eps | k|\, ,
\end{equation}
and thanks to the fact that the projectors $\mathcal P_\star $ are bounded from~$L^2_v$ to~$L^2_v$. The same   holds   for the terms coming from the second part of the right-hand side, so we find that
 $$
\begin{aligned}
\|U^{\eps ,\sharp}(\cdot)  \P_0f_{\mathrm{in}}^\eps \|_{ \widetilde L^\infty_tH^{m }_x L^2_v }
  &\lesssim \eps\|   \P_0f_{\mathrm{in}}^\eps\|_{ H^{m+1}_x L^2_v} .\end{aligned}
 $$
Using this estimate, along with Proposition~\ref{prop:estimate_Uepssharp}--(1) and Remark~\ref{remark truncation} and the constraints on~$\alpha,\beta$ and~$\ell$, we deduce that
$$
\begin{aligned}
&\eps^\beta\|U^{\eps ,\sharp}(\cdot)  \P_0f_{\mathrm{in}}^\eps \|_{ \widetilde L^\infty_tH^{\ell }_x L^2_v }
+\frac{\eps^{\beta}}{\sqrt\eps} \| \mathbf P_0^\perp U^{\eps ,\sharp}(\cdot)  \P_0 f_{\mathrm{in}}^\eps \|_{L^2_tH^\ell_x H^{s,*}_v} 
+ \eps^{\beta} \| \mathbf P_0 U^{\eps ,\sharp}(\cdot)  \P_0f_{\mathrm{in}}^\eps \|_{L^2_tH^\ell_x H^{s,*}_v} \\
&\quad\lesssim 
\eps^{1+\beta }\|   \P_0f_{\mathrm{in}}^\eps\|_{ H^{\ell+1}_x L^2_v} 
+\left( \eps^{\beta+\frac12} + \eps^{\beta+1} \right) \|   \P_0f_{\mathrm{in}}^\eps\|_{ H^{\ell}_x L^2_v} \\
&\quad\lesssim 
\left( \eps^{1+\beta -\alpha(\ell+\frac12)} + \eps^{\beta+\frac12 -\alpha(\ell-\frac12)}  \right)   \|{ g_{\rm in}}\|_{ H^{\frac12}_x L^2_v} \\
&\quad\lesssim 
\eps^{ \frac12 -\alpha }   \|{ g_{\rm in}}\|_{ H^{\frac12}_x L^2_v} \, ,
\end{aligned}
$$
as well as
$$
\begin{aligned}
&\|U^{\eps ,\sharp}(\cdot)  \P_0f_{\mathrm{in}}^\eps \|_{ \widetilde L^\infty_tH^{\frac12 }_x L^2_v }
+\frac{1}{\sqrt\eps} \| \mathbf P_0^\perp U^{\eps ,\sharp}(\cdot)  \P_0 f_{\mathrm{in}}^\eps \|_{L^2_tH^\frac32_x H^{s,*}_v} 
+  \| \mathbf P_0 U^{\eps ,\sharp}(\cdot)  \P_0f_{\mathrm{in}}^\eps \|_{L^2_tH^\frac32_x H^{s,*}_v} \\
&\quad\lesssim 
\left( \sqrt \eps + \eps \right) \|   \P_0f_{\mathrm{in}}^\eps\|_{ H^\frac32_x L^2_v} \\
&\quad\lesssim 
\eps^{ \frac12 -\alpha }  \|{ g_{\rm in}}\|_{ H^{\frac12}_x L^2_v} \, .
\end{aligned}
$$
We conclude that
\begin{equation}\label{eq:Ueps_sharp}
\big\|U^{\eps ,\sharp}(\cdot)\P_0f_{\mathrm{in}}^\eps  \big\| _{\mathcal X^\eps_\infty}
\lesssim 
 \eps^{\frac12-\alpha}  \|{ g_{\rm in}}\|_{H^{\frac12}_x L^2_v} \, .
\end{equation}
Now let us turn to~$ \widetilde U^{\eps}_{\rm NSF}(t)\P_0f_{\mathrm{in}}^\eps$ as defined in~(\ref{decomposition Ueps1}). By construction  it is made of three terms, defined  in Fourier variables by
$$
\begin{aligned}
\widehat{  \widetilde U^{\eps}_{\rm NSF}}(t, k)&= \widehat{  \widetilde U^{\eps,\sharp}_{\rm NSF}}(t, k) + \widehat{  \widetilde U^{\eps 1,\flat}_{\rm NSF}}(t, k) +    \widehat{  \widetilde U^{\eps 2,\flat}_{\rm NSF}}(t, k)\\
	&:= \Big(1-\chi \Big(\frac{\eps | k|}\kappa\Big)\Big)\sum_{\star \in \{\rm NS, heat\}}
 e^{ - \nu_{\star} | k|^2 t} \, \mathcal P_{\star}^0 \Big({ k \over | k|}\Big)\\
 & \qquad + \chi \Big(\frac{\eps | k|}\kappa\Big)\sum_{\star \in \{\rm NS, heat\}}
	\Big(e^{   \lambda_{\star} (\eps  k) \frac t{\eps^2}}  -  e^{ - \nu_{\star} | k|^2 t}\Big) \mathcal P_{\star}^0  \Big({ k \over | k|}\Big)   \\
	&\qquad + \chi \Big(\frac{\eps | k|}\kappa\Big)\sum_{\star \in \{\rm NS, heat\}} e^{   \lambda_{\star} (\eps  k) \frac t{\eps^2}} \left[ \eps |k|  \mathcal P_{\star}^1 (\tfrac{k}{|k|}) + \eps^2 |k|^2 \mathcal P_{\star}^2 (\eps  k)  \right] \,.
\end{aligned}
$$
We shall study the three contributions in turn, starting with~$ \widetilde U^{\eps,\sharp}_{\rm NSF}(t)$.  We recall  again that the projectors $\mathcal P_\star^j$ are bounded from~$L^2_v$ to~$L^2_v$ and from~$H^{s,*}_v$ to~$H^{s,*}_v$. 
%
Using the fact that~${\mathbf P}_0 f_{\mathrm{in}}^\eps$ is mean free which implies that there is no contribution to~$ k = 0$,   and the fact that the integral in time of the exponential term provides a factor~$ | k|^{-2}$,  we have for any~$m \ge 0$,
$$
\begin{aligned}
\big\| \widetilde U^{\eps,\sharp}_{\rm NSF}(\cdot)\P_0f_{\mathrm{in}}^\eps\big\|^2_{  L^2_tH^{m }_x H^{s,*}_v } &\lesssim\sum_{\star \in \{\rm NS, heat\}}
\sum_{ k \in \Z^3 \setminus\{0\}} \langle  k\rangle^{2m} | k|^{-2} \Big(1-\chi \Big(\frac{\eps | k|}\kappa\Big)\Big)^2 \Big\| \widehat{{\mathbf P}_0 f_{\mathrm{in}}^\eps} ( k,\cdot)\Big\|_{H^{s,*}_v}^2   .
\end{aligned}
$$
Using~\eqref{estimatechi} and the fact that $\| \mathbf P_0 f \|_{H^{s,*}_v} \lesssim \| \mathbf P_0 f \|_{L^2_v}$, we get
$$
\begin{aligned}
\big\| \widetilde U^{\eps,\sharp}_{\rm NSF}(\cdot)\P_0f_{\mathrm{in}}^\eps\big\|_{  L^2_tH^{m }_x H^{s,*}_v } 
&\lesssim \eps \left( \sum_{ k \in \Z^3} \langle  k\rangle^{2m}   \Big\| \widehat{{\mathbf P}_0 f_{\mathrm{in}}^\eps} ( k,\cdot)\Big\|_{L^2_v}^2 \right)^{\frac12} 
\lesssim 
\eps \| \mathbf P_0  f_{\mathrm{in}}^\eps \|_{H^m_x L^2_v} \, .
\end{aligned}
$$
Similar computations give 
$$
\big\| \widetilde U^{\eps,\sharp}_{\rm NSF}(\cdot)\P_0f_{\mathrm{in}}^\eps\big\|_{\widetilde L^\infty_tH^{m}_x L^2_v }  \lesssim 
\eps  \| \mathbf P_0  f_{\mathrm{in}}^\eps \|_{H^{m+1}_xL^2_v}\, .
$$
Therefore, arguing as for obtaining estimate~\eqref{eq:Ueps_sharp} for the term $U^{\eps,\sharp}$, we also get
\begin{equation}\label{eq:widetildeUeps_sharp}
\big\| \widetilde U^{\eps,\sharp}_{\rm NSF} (\cdot) \P_0 f_{\mathrm{in}}^\eps  \big\| _{\mathcal X^\eps_\infty}
\lesssim  
 \eps^{\frac12-\alpha}   \|{ g_{\rm in}}\|_{H^{\frac12}_x L^2_v} \, .
\end{equation}
Next we turn to~$ \widetilde U^{\eps 1,\flat}_{\rm NSF}(t)$. We write
$$
\begin{aligned}
 \chi \Big(\frac{\eps | k|}\kappa\Big)\Big|e^{   \lambda_{\star} (\eps  k) \frac t{\eps^2}}  -  e^{ - \nu_{\star} | k|^2 t} \Big| &\lesssim
  \chi \Big(\frac{\eps | k|}\kappa\Big) e^{ -\frac{ \nu_{\star}}2 | k|^2 t}t \eps | k|^3\\
  &\lesssim
   e^{ -\frac{ \nu_{\star}}4 | k|^2 t}  \eps | k| \, .
  \end{aligned}
$$
 The same argument gives 
$$
\big\| \widetilde U^{\eps 1,\flat} _{\rm NSF}(\cdot) \P_0f_{\mathrm{in}}^\eps\big\|_{\widetilde L^\infty_tH^{m}_x L^2_v }  
+ \big\| \widetilde U^{\eps 1,\flat} _{\rm NSF}(\cdot) \P_0f_{\mathrm{in}}^\eps\big\|_{L^2_tH^{m}_x L^2_v }
\lesssim 
\eps    \| \mathbf P_0  f_{\mathrm{in}}^\eps \|_{H^{m+1}_xL^2_v}\,  .
$$
We can then again argue as in the case of the bound~\eqref{eq:Ueps_sharp} to deduce
\begin{equation}\label{eq:widetildeUeps_1}
\big\| \widetilde U^{\eps 1,\flat}_{\rm NSF} (\cdot) \P_0 f_{\mathrm{in}}^\eps  \big\| _{\mathcal X^\eps_\infty}
\lesssim \eps^{\frac12-\alpha}  \|g_{\rm in}\|_{H^{\frac12}_x L^2_v}
\, .
\end{equation}
The computations for~$ \widetilde U^{\eps 2,\flat}_{\rm NSF}(t)$ are very similar: we find
\begin{equation}\label{eq:widetildeUeps_2}
\begin{aligned}
\big\| \widetilde U^{\eps 2,\flat}_{\rm NSF}  (\cdot) \P_0 f_{\mathrm{in}}^\eps  \big\| _{\mathcal X^\eps_\infty}
&\lesssim\eps^{\frac12-\alpha}
 \|g_{\rm in}\|_{H^{\frac12}_x L^2_v}
\, .
  \end{aligned}
\end{equation}
Putting together the estimates on~$\widetilde U^{\eps,\sharp}_{\rm NSF}(t)\P_0f_{\mathrm{in}}^\eps$,~$\widetilde U^{\eps 1,\flat}_{\rm NSF} (t)\P_0f_{\mathrm{in}}^\eps$ and~$\widetilde U^{\eps 2,\flat}_{\rm NSF} (t)\P_0f_{\mathrm{in}}^\eps$ gives~(\ref{limit data1}).

\medskip
\noindent  {Recalling~(\ref{decDepsproof})}, it remains to prove that
\begin{equation}\label{limit data2}
\big\|  U^{\eps}(t)\P_0^\perp f_{\mathrm{in}}^\eps  \big\| _{\mathcal X^\eps_\infty}
\lesssim 
\eps^{\beta} \|\P_0^\perp f_{\rm in}^\eps\|_{H^\ell_x L^2_v} 
+ \|\P_0^\perp f_{\rm in}^\eps\|_{H^{\frac12}_x L^2_v} 
\, .
\end{equation}
From Proposition~\ref{prop:estimate_Ueps}--(1), we have 
	\[
	\big\|  U^{\eps}(t)\P_0^\perp f_{\mathrm{in}}^\eps  \big\| _{\widetilde L^\infty_T H^{\frac12}_x L^2_v} 
	\lesssim \|\P_0^\perp f_{\rm in}^\eps\|_{H^{\frac12}_xL^2_v} \, ,
	\]
as well as
$$
\begin{aligned}
\frac{1}{\sqrt \eps} \big\| \mathbf P_0^\perp U^{\eps}(t)\P_0^\perp f_{\mathrm{in}}^\eps  \big\| _{L^2_T H^\frac32_x H^{s,*}_v} 
\lesssim \sqrt \eps  \|\P_0^\perp f_{\rm in}^\eps\|_{H^\frac32_xL^2_v}
\end{aligned}
$$
Similarly, 
$$
	\eps^\beta \big\|  U^{\eps}(t)\P_0^\perp f_{\mathrm{in}}^\eps  \big\| _{\widetilde L^\infty_T H^{\ell}_x L^2_v} 
	\lesssim \eps^\beta \|\P_0^\perp f_{\rm in}^\eps\|_{H^{\ell}_x L^2_v} \, ,
$$
and
$$
\begin{aligned}
\frac{\eps^\beta}{\sqrt\eps} \big\| \mathbf P_0^\perp U^{\eps}(t)\P_0^\perp f_{\mathrm{in}}^\eps  \big\| _{L^2_T H^{\ell}_x H^{s,*}_v} 
\lesssim  \eps^{\beta+\frac12}  \|\P_0^\perp f_{\rm in}^\eps\|_{H^{\ell}_xL^2_v}\,  .
\end{aligned}
$$
It remains to control~$ \P_0 U^\eps  \P_0^\perp $ in~$L^2_T H^m_x H^{s,*}_v$ for~$m=\ell$ or~$
m=3/2$.
From the decomposition~\eqref{decomposition Ueps}, for any $ k \in \Z^3$ and $t \ge 0$, we have
	\[
	\|\widehat U^\eps(t, k) \P_0^\perp f_{\mathrm{in}}^\eps  \|_{L^2_v \to L^2_v} 
	\lesssim \eps | k| e^{-t| k|^2} + e^{-\lambda_0 \frac{t}{\eps^2}}\, .
	\]
Using that $\mathbf P_0$ is bounded from  $L^2_v$ into $H^{s,*}_v$  and that $\P_0^\perp \P_0^\perp = \P_0^\perp$, we obtain
$$
\| \mathbf P_0 \widehat U^\eps(\cdot, k) \P_0^\perp \widehat f_{\rm in}^\eps ( k) \|_{L^2_T H^{s,*}_v} 
	\lesssim \eps \|\P_0^\perp  \widehat f_{\rm in}^\eps ( k) \|_{L^2_v}\, .
$$
We thus deduce
$$
\| \mathbf P_0  U^\eps(\cdot) \P_0^\perp  f_{\rm in}^\eps  \|_{L^2_T H^\frac32_x H^{s,*}_v} 
	\lesssim \eps \|\P_0^\perp   f_{\rm in}^\eps  \|_{H^\frac32_x L^2_v}\, ,
$$
as well as
$$
\eps^{\beta} \| \mathbf P_0  U^\eps(\cdot) \P_0^\perp  f_{\rm in}^\eps  \|_{L^2_T H^{\ell}_x H^{s,*}_v} 
	\lesssim \eps^{\beta+1} \|\P_0^\perp   f_{\rm in}^\eps  \|_{H^{\ell}_x L^2_v}\, .
$$
Gathering the previous estimates and using that $\sqrt \eps  \|\P_0^\perp f_{\rm in}^\eps\|_{H^\frac32_xL^2_v} \lesssim \eps^\beta \|\P_0^\perp f_{\rm in}^\eps\|_{H^{\ell}_xL^2_v}$ ends the proof of~\eqref{limit data2}.
The estimates~\eqref{limit data1} and~\eqref{limit data2} together give Proposition~\ref{estimates on delta eps}--(2).

\subsection{Contribution of the source term $\mathcal S^{\eps}$}\label{sct:source}
In this paragraph, we are going to prove Proposition~\ref{estimates on delta eps}--(3). 
We recall that 
	$$ 
	\mathcal S ^{\eps}(t)  = \Psi^{\eps }[  g^\eps, g^\eps](t)-  \Psi_{\rm NSF} [ g^\eps, g^\eps](t) \, ,
	$$
	and we want to prove that
$$
	 \|\mathcal S^{\eps}\| _{\mathcal X^\eps_T}  \leq{\color{black} \eps^{\frac12-2\alpha} \Phi(\|{ g_{\rm in}}\|_{H^{\frac12}_xL^2_v}) }	   
$$
for a nonnnegative increasing function $\Phi$. 
We recall decompositions~(\ref{decomposition Psieps 1}) and~(\ref{decomposition Psieps 22}).  We can further expand~$\Psi^{\eps,\flat}$ by writing
\begin{equation}\label{decomposition Psieps 222}
	\Psi^{\eps,\flat}  
	= \Psi_{\rm NSF}  +  \Psi^{\eps1,\sharp}_{\rm NSF}
	+ \Psi^{\eps1,\flat}_{\rm wave} 
	 +\widetilde  \Psi^{\eps1,\flat}
	+  \Psi^{\eps2,\flat}
\end{equation}
 where writing~$\widehat\Psi_\star [f_1,f_2](t)=\mathcal{F}_x \big( \Psi _{\rm \star} [f_1,f_2](t)\big)  $  and recalling that~$\mathbf P_0 \Gamma_{\mathrm{sym}} = 0$,
 \begin{align*}
	\widehat \Psi _{\rm NSF} [f_1,f_2](t, k) 
	&:= \sum_{\star \in \{\rm NS, heat\}}
 \int_0^t e^{  -\nu _{\star}  (t-t') | k|^2} | k|\mathcal P_{\star} ^{1}\Big({ k \over | k|}\Big) \widehat \Gamma_{\mathrm{sym}} \big(f_1(t'),f_2(t')\big) ( k) \, \d t' 	\,,  \\
		\widehat \Psi^{\eps1,\sharp} _{\rm NSF}  [f_1,f_2](t, k) 
	&:=   \Big(\chi\Big (\frac{\eps| k|}\kappa\Big)-1\Big) \\
	&\hspace{0,5cm}
	\times \, \sum_{\star \in \{\rm NS, heat  \}}
	\int_0^t e^{   -  \nu_{\star}   | k|^2  (t-t')}   | k|  \mathcal P_{\star}^1\Big({ k \over | k|}\Big)   \widehat \Gamma_{\mathrm{sym}} \big(f_1(t'),f_2(t')\big) ( k) \, \d t' \,,\\
	\widehat \Psi^{\eps1,\flat} _{\rm wave} [f_1,f_2](t, k) 
	&:=  
 \chi\Big (\frac{\eps| k|}\kappa\Big)  \\
 &\hspace{-0,3cm}
	\times \, \sum_{\pm}
	\int_0^t e^{  ( \pm i c \eps| k| -  \nu_{\rm \small wave\pm}  \eps^2  | k|^2  )\frac {t-t'}{\eps^2}}   | k|  \mathcal P_{\rm \small wave\pm}^1\Big({ k \over | k|}\Big)   \widehat \Gamma_{\mathrm{sym}} \big(f_1(t'),f_2(t')\big) ( k)\, \d t' \,,\\
 \widehat{\widetilde \Psi}\vphantom\Psi^{\eps1,\flat}   [f_1,f_2](t, k) 
	&:=  \chi\Big (\frac{\eps| k|}\kappa\Big) \sum_{\star \in \{\rm NS, heat,wave\pm  \}}
\int_0^t e^{  ( \pm i c_\star \eps| k| -  \nu_{\star} \eps^2  | k|^2  )\frac {t-t'}{\eps^2}} \\
	&\hspace{0,5cm}
	\times \, \Big(e^{(t-t') \frac{\gamma_\star(\eps| k|)}{\eps^2}}-1 \Big)  | k|  \mathcal P_{\star}^1\Big({ k \over | k|}\Big)   \widehat \Gamma_{\mathrm{sym}} \big(f_1(t'),f_2(t')\big) ( k)\, \d t' \,,\\
		\widehat{  \Psi}^{\eps2,\flat} [f_1,f_2](t, k) 
	&:=  \chi\Big (\frac{\eps| k|}\kappa\Big) \\
	&\hspace{0,4cm}
	\times \, \sum_{\star \in \{\rm NS, heat,wave\pm\}}
	\int_0^t
	e^{   \lambda_{\star}(\eps  k) \frac {t-t'}{\eps^2}}
	 \eps | k|^2  \mathcal P_{\star}^2(\eps k)  \widehat \Gamma_{\mathrm{sym}} \big(f_1(t'),f_2(t')\big) ( k)\, \d t'   \, . 
	\end{align*}
 We have used the notation~$c_\star:=0$ if~$\star \in  \{\rm NS, heat\}$ {and $c_\star := c$ if $\star \in \{{\rm wave} \pm\}$}.
So let us write
\begin{equation}\label{eq:source_decomposition}
 \mathcal S ^{\eps}(t) =\Big(  \Psi^{\eps,\sharp }  + \Psi_{\rm NSF}  +  \Psi^{\eps1,\sharp}_{\rm NSF}
	+ \Psi^{\eps1,\flat}_{\rm wave} 
	 +\widetilde  \Psi^{\eps1,\flat}
	+  \Psi^{\eps2,\flat}\Big)
 [  g^\eps, g^\eps](t)\, ,
\end{equation}
and let us estimate each term separately.
We start with the first term in \eqref{eq:source_decomposition}. 
\begin{lem}\label{lem:source_Psi_eps_sharp}
There holds
$$
\begin{aligned}
\|  \Psi^{\eps,\sharp }  [  g^\eps, g^\eps]\|_{\mathcal X^\eps_T}
&\lesssim \eps^{1+\beta}  \| \Gamma (g^\eps, g^\eps) \|_{\widetilde L^\infty_T H^\ell_x L^2_v}
+ \eps^{\frac12+\beta}  \| \Gamma (g^\eps, g^\eps) \|_{L^2_T H^\ell_x (H^{s,*}_v)'} \\
&\quad
+ \eps \| \Gamma (g^\eps, g^\eps) \|_{\widetilde L^\infty_T H^{\frac12}_x L^2_v}  
+ \sqrt \eps \| \Gamma (g^\eps, g^\eps) \|_{L^2_T H^\frac32_x (H^{s,*}_v)'} \, .
\end{aligned}
$$
\end{lem}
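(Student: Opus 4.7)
The proof amounts to applying the already-established semi-group estimates on $\Psi^{\eps,\sharp}$ to each of the six pieces that make up the norm $\|\cdot\|_{\mathcal X^\eps_T}$ (as defined in~\eqref{def:norm_XT}), with the two entries being the macroscopic quantity $g^\eps$. The plan is to split those pieces into two groups: the $\widetilde L^\infty$-in-time pieces (at regularity $1/2$ and $\ell$), handled via Proposition~\ref{prop:Psiepssharp}, and the $L^2$-in-time pieces (at regularity $3/2$ and $\ell$), handled via Corollary~\ref{cor:estimate_Psieps}. Since $\mathbf P_0 \Gamma_{\mathrm{sym}} = 0$, we automatically have $\mathbf P_0 \Psi^{\eps,\sharp}[g^\eps,g^\eps] = 0$... actually no, $\Psi^{\eps,\sharp}$ does not commute with $\mathbf P_0$ so we simply bound $\|\mathbf P_0 \Psi^{\eps,\sharp}[\cdot]\|$ and $\|\mathbf P_0^\perp \Psi^{\eps,\sharp}[\cdot]\|$ by the full norm $\|\Psi^{\eps,\sharp}[\cdot]\|$.

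For the $\widetilde L^\infty$ pieces, Proposition~\ref{prop:Psiepssharp} directly gives, for any $m\geq 0$,
$$
\|\Psi^{\eps,\sharp}[g^\eps,g^\eps]\|_{\widetilde L^\infty_T H^m_x L^2_v} \lesssim \eps\,\|\Gamma(g^\eps,g^\eps)\|_{\widetilde L^\infty_T H^m_x L^2_v}\, .
$$
Applying this with $m=1/2$ yields the third term in the right-hand side of the lemma, and with $m=\ell$ (after multiplication by $\eps^\beta$) yields the first term $\eps^{1+\beta}\|\Gamma(g^\eps,g^\eps)\|_{\widetilde L^\infty_T H^\ell_x L^2_v}$.

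For the $L^2$-in-time pieces, Corollary~\ref{cor:estimate_Psieps} gives
$$
\|\Psi^{\eps,\sharp}[g^\eps,g^\eps]\|_{L^2_T H^m_x H^{s,*}_v} \lesssim \eps\,\|\Gamma(g^\eps,g^\eps)\|_{L^2_T H^m_x (H^{s,*}_v)'} \, .
$$
Applied with $m=3/2$, this controls both $\|\mathbf P_0 \Psi^{\eps,\sharp}[g^\eps,g^\eps]\|_{L^2_T H^{3/2}_x H^{s,*}_v}$ and $\frac{1}{\sqrt\eps}\|\mathbf P_0^\perp \Psi^{\eps,\sharp}[g^\eps,g^\eps]\|_{L^2_T H^{3/2}_x H^{s,*}_v}$; the latter is the borderline one, producing the fourth term $\sqrt\eps\,\|\Gamma(g^\eps,g^\eps)\|_{L^2_T H^{3/2}_x (H^{s,*}_v)'}$. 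Applied with $m=\ell$ and multiplied by $\eps^\beta$ (resp.\ by $\eps^\beta/\sqrt\eps$), it produces the second term $\eps^{\frac12+\beta}\|\Gamma(g^\eps,g^\eps)\|_{L^2_T H^\ell_x (H^{s,*}_v)'}$, which dominates the $\mathbf P_0$ contribution since $\eps\leq 1$.

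There is no serious obstacle here: the only point worth noting is that the prefactor $\eps^{-1/2}$ in the definition of $\|\cdot\|_{\mathcal X^\eps_T}$ in front of $\mathbf P_0^\perp$ is exactly compensated by the $\eps$ gained in Corollary~\ref{cor:estimate_Psieps}, leaving the clean prefactor $\sqrt\eps$ (resp.\ $\eps^{\frac12+\beta}$) that appears in the statement. Summing the six estimates concludes the proof.
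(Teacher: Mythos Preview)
Your proof is correct and follows essentially the same route as the paper: the $\widetilde L^\infty$-in-time pieces are handled via Proposition~\ref{prop:Psiepssharp} (yielding the factors $\eps$ and $\eps^{1+\beta}$), and the $L^2$-in-time pieces via the second estimate of Corollary~\ref{cor:estimate_Psieps} (yielding $\sqrt\eps$ and $\eps^{\frac12+\beta}$ after dividing by $\sqrt\eps$ for the $\mathbf P_0^\perp$ parts). The only cosmetic point is the self-correcting aside about $\mathbf P_0\Psi^{\eps,\sharp}$, which should be removed in a final write-up; your subsequent argument (bounding the $\mathbf P_0$ and $\mathbf P_0^\perp$ parts by the full norm) is the right one.
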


\begin{proof}
From Proposition~\ref{prop:Psiepssharp} 
and Corollary~\ref{cor:estimate_Psieps}
we have directly that$$
\begin{aligned}
& \eps^{\beta} \|  \Psi^{\eps,\sharp }[  g^\eps, g^\eps]\|_{\widetilde L^\infty_T H^\ell_x L^2_v} 
+\frac{\eps^\beta}{\sqrt\eps}\| \mathbf P_0^\perp \Psi^{\eps,\sharp }[  g^\eps, g^\eps]\|_{L^2_T H^\ell_x H^{s,*}_v}
+\eps^{\beta} \| \mathbf P_0 \Psi^{\eps,\sharp }[  g^\eps, g^\eps]\|_{L^2_T H^\ell_x H^{s,*}_v} \\
&\quad
\lesssim  \eps^{1+\beta} \| \Gamma (g^\eps , g^\eps) \|_{\widetilde L^\infty_T H^\ell_x {L^2_v}}
+ \left( \eps^{\beta+\frac12} + \eps^{\beta+1} \right)  \| \Gamma (g^\eps , g^\eps) \|_{L^2_T H^\ell_x (H^{s,*}_v)'} 
\end{aligned}
$$
and  
$$
\begin{aligned}
& \|  \Psi^{\eps,\sharp }[  g^\eps, g^\eps]\|_{\widetilde L^\infty_T H^{\frac12}_x L^2_v} 
+\frac{1}{\sqrt \eps}\| \mathbf P_0^\perp \Psi^{\eps,\sharp }[  g^\eps, g^\eps]\|_{L^2_T H^\frac32_x H^{s,*}_v}
+\| \mathbf P_0 \Psi^{\eps,\sharp }[  g^\eps, g^\eps]\|_{L^2_T H^\frac32_x H^{s,*}_v} \\
&\quad
\lesssim \eps \| \Gamma (g^\eps,g^\eps) \|_{\widetilde L^\infty_T H^{\frac12}_x L^2_v} + (\sqrt \eps + \eps)  \| \Gamma (g^\eps , g^\eps) \|_{L^2_T H^\frac32_x (H^{s,*}_v)'}\, ,
\end{aligned}
$$
and we conclude the proof by gathering these estimates.
\end{proof}
Before looking at the other contributions, let us remark that from~(\ref{decomposition Psieps 222}), for $ k = 0$, we have 
	\[
	\widehat \Psi^\eps[g^\eps,g^\eps](t,0) = \widehat \Psi^{\eps,\sharp}[g^\eps,g^\eps](t,0)\,,
	\]
it is thus enough to analyze the other contributions in \eqref{eq:source_decomposition} for $ k \in \Z^3 \setminus \{0\}$ i.e. for $| k| \geq 1$.

For the   term~$ \Psi^{\eps1,\flat}_{\rm wave} $ in \eqref{eq:source_decomposition}, we follow the arguments of~\cite{GT,CC}:  one needs to exploit the oscillations of the phase by integrations by parts in time. Thus with   notation inspired from~\cite{GT,CC} we define
$$
H_\pm^\eps(t,t',x):=  \mathcal{F}_x^{-1} \left( \chi\Big (\frac{\eps| k|}\kappa\Big) e^{-  \nu_{\rm \small wave\pm} (t-t') | k|^2} | k|\mathcal P_{\rm \small wave\pm}^1\Big({ k \over | k|}\Big) \widehat {\Gamma}( g^\eps, g^\eps)(t', k)\right)
$$
 	so that after an integration by parts in time
	\begin{align*}
		&\widehat \Psi^{\eps1, {\flat}} _{\rm wave} [g^\eps,g^\eps](t, k) \\
 &\quad =  
\sum_{\pm} \frac{\eps}{ic| k| } \Big(
	\int_0^t e^{  \pm i c  | k|  \frac {t-t'}{\eps}}  \partial_{t'} \widehat  H_\pm^\eps(t,t', k) \, \d t' 
	 - \widehat  H_\pm^\eps(t,t, k) + e^{  \pm i c  | k|  \frac {t}{\eps}}  \widehat  H_\pm^\eps(t,0, k)\Big)\, .
 	\end{align*}
	 Let us define
	 \begin{equation}\label{eq:Jeps}
	 \widehat{J^\eps_\pm}(t,k):=  \chi\Big (\frac{\eps| k|}\kappa\Big) \frac{\eps}{ic }\int_0^t e^{  \pm i c  | k|  \frac {t-t'}{\eps}-  \nu_{\rm \small wave
	\pm} (t-t') | k|^2} \mathcal P_{\rm \small wave\pm}^1\Big({ k \over | k|}\Big) \partial_{t'} \widehat {\Gamma}( g^\eps, g^\eps)(t', k) \, \d t'
	\end{equation}
	and
		 \begin{equation}\label{eq:Ieps}
  \widehat{I^\eps_\pm}(t,k)	:= \widehat \Psi^{\eps1, {\flat}}_{\rm wave} [  g^\eps, g^\eps] (t,k)-	 \widehat{J^\eps_\pm}(t,k)\, ,
 \end{equation}
which will be estimated separately.

For the term $I^\eps_{\pm}$ we have: 
\begin{lem}\label{lem:source_Psi_eps1_wave_I}
There holds
$$
\begin{aligned}
\|  I^{\eps}_{\pm} \|_{\mathcal X^\eps_T}
&\lesssim \eps^{1+\beta} \| \Gamma (g^\eps, g^\eps) \|_{\widetilde L^\infty_T H^\ell_x L^2_v} 
+  \eps^{\frac12+\beta}  \left(\| \Gamma (g^\eps, g^\eps) \|_{L^2_T H^{\ell}_x (H^{s,*}_v)'} + \| \Gamma (g^\eps_{\mathrm{in}}, g^\eps_{\mathrm{in}}) \|_{ H^{\ell}_x (H^{s,*}_v)'}\right)\\
&\quad
+ \eps \| \Gamma (g^\eps, g^\eps) \|_{\widetilde L^\infty_T H^{\frac12}_x L^2_v}
+ \sqrt \eps \left( \| \Gamma (g^\eps, g^\eps) \|_{L^2_T H^\frac32_x (H^{s,*}_v)'}+ \| \Gamma (g^\eps_{\mathrm{in}}, g^\eps_{\mathrm{in}}) \|_{ H^\frac32_x (H^{s,*}_v)'} \right) \,  .
\end{aligned}
$$
\end{lem}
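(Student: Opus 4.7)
The plan is to perform the integration by parts in time of~\eqref{eq:Ieps} explicitly, splitting
\begin{equation*}
\widehat{I^\eps_\pm}(t,k) \;=\; \widehat{B^1_\pm}(t,k) \;+\; \widehat{B^0_\pm}(t,k) \;+\; \widehat{K^\eps_\pm}(t,k),
\end{equation*}
where $B^1_\pm$ and $B^0_\pm$ are the boundary contributions at $t'=t$ and $t'=0$ respectively, and $K^\eps_\pm$ is the interior piece in which $\partial_{t'}$ falls on the heat-type factor $e^{-\nu_{\rm wave\pm}(t-t')|k|^2}$ in $\widehat H^\eps_\pm$ (the companion piece where $\partial_{t'}$ falls on $\widehat\Gamma_{\mathrm{sym}}$ is precisely $J^\eps_\pm$ from~\eqref{eq:Jeps}). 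The common prefactor $\eps$ is the gain from integrating against the primitive of $e^{\pm ic|k|(t-t')/\eps}$, which brings $\eps/(c|k|)$ that is partly absorbed by the $|k|$ already present in $\widehat H^\eps_\pm$. Explicitly,
\begin{align*}
\widehat{B^1_\pm}(t,k)
&= \mp\tfrac{\eps}{ic}\,\chi\bigl(\tfrac{\eps|k|}{\kappa}\bigr)\,\mathcal P^1_{\rm wave\pm}\bigl(\tfrac{k}{|k|}\bigr)\,\widehat\Gamma_{\mathrm{sym}}(g^\eps,g^\eps)(t,k),\\
\widehat{B^0_\pm}(t,k)
&= \pm\tfrac{\eps}{ic}\,\chi\bigl(\tfrac{\eps|k|}{\kappa}\bigr)\,e^{\pm ic|k|t/\eps-\nu_{\rm wave\pm}t|k|^2}\,\mathcal P^1_{\rm wave\pm}\bigl(\tfrac{k}{|k|}\bigr)\,\widehat\Gamma_{\mathrm{sym}}(g^\eps_{\mathrm{in}},g^\eps_{\mathrm{in}})(k),\\
\widehat{K^\eps_\pm}(t,k)
&= \mp\tfrac{\eps\nu_{\rm wave\pm}}{ic}\,\chi\bigl(\tfrac{\eps|k|}{\kappa}\bigr)\!\int_0^t \!e^{\pm ic|k|(t-t')/\eps-\nu_{\rm wave\pm}(t-t')|k|^2}|k|^2\mathcal P^1_{\rm wave\pm}\bigl(\tfrac{k}{|k|}\bigr)\widehat\Gamma_{\mathrm{sym}}(g^\eps,g^\eps)(t',k)\,\d t'.
\end{align*}

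Next, I would estimate each piece in the various ingredients of $\|\cdot\|_{\mathcal X^\eps_T}$, namely in $\widetilde L^\infty_T H^m_x L^2_v$ and in $L^2_T H^m_x H^{s,*}_v$ for $m\in\{\tfrac12,\tfrac32,\ell\}$. The tools are the same throughout: the projectors $\mathcal P^1_{\rm wave\pm}(k/|k|)$ are bounded uniformly in $\eps|k|\le\kappa$ from $L^2_v$ to $L^2_v$ and from $(H^{s,*}_v)'$ to $H^{s,*}_v$ (see the discussion following~\eqref{dec Pstar}), the cut-off satisfies $|\chi(\eps|k|/\kappa)|\le 1$, and the time convolutions are handled by Young's inequality in $t$. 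For $B^1_\pm$ no time convolution is needed, giving directly $\eps\|\Gamma(g^\eps,g^\eps)\|_{\widetilde L^\infty_T H^m_x L^2_v}$ or $\eps\|\Gamma(g^\eps,g^\eps)\|_{L^2_T H^m_x (H^{s,*}_v)'}$ depending on the target norm. For $K^\eps_\pm$, Young's inequality $L^1_t\star L^p_t\subset L^p_t$ applied with the convolution kernel $|k|^2 e^{-\nu_{\rm wave\pm}(t-t')|k|^2}$, whose $L^1_t$-norm is uniformly bounded in $k$, produces the same bounds.

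The only piece requiring a little extra care is $B^0_\pm$, which carries the initial-data contribution. For the $\widetilde L^\infty_T$ norms the trivial bound $|e^{\pm ic|k|t/\eps-\nu_{\rm wave\pm}t|k|^2}|\le 1$ yields $\eps\|\Gamma(g^\eps_{\mathrm{in}},g^\eps_{\mathrm{in}})\|_{H^m_x L^2_v}\le \eps\|\Gamma(g^\eps,g^\eps)\|_{\widetilde L^\infty_T H^m_x L^2_v}$ since $g^\eps(0)=g^\eps_{\mathrm{in}}$. For the $L^2_T$ norms I would use $\|e^{-\nu_{\rm wave\pm}t|k|^2}\|_{L^2_t}\lesssim |k|^{-1}$ to trade one power of $|k|$ against the $L^2_t$ integration, obtaining $\eps\|\Gamma(g^\eps_{\mathrm{in}},g^\eps_{\mathrm{in}})\|_{H^{m-1}_x(H^{s,*}_v)'}\le \eps\|\Gamma(g^\eps_{\mathrm{in}},g^\eps_{\mathrm{in}})\|_{H^m_x(H^{s,*}_v)'}$; taking $m=\tfrac32$ and $m=\ell$ produces the $\Gamma(g^\eps_{\mathrm{in}},g^\eps_{\mathrm{in}})$ terms in the statement. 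Finally, I assemble the pieces at $m\in\{\tfrac12,\tfrac32,\ell\}$ using $\|\P_0^\perp f\|\le\|f\|$: the common prefactor $\eps$ combines with the $1/\sqrt\eps$ in front of the $\P_0^\perp$ norms of $\mathcal X^\eps_T$ to produce the $\sqrt\eps$, and the $\eps^\beta$ weight of the $H^\ell_x$ ingredients produces $\eps^{\beta+1/2}$ or $\eps^{1+\beta}$, exactly as announced.

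The argument is therefore mostly bookkeeping once the splitting is set up. The oscillating phase $e^{\pm ic|k|(t-t')/\eps}$ never enters quantitatively; it serves only as the device against which we integrate by parts, and all the useful decay is drawn from the parabolic factor $e^{-\nu_{\rm wave\pm}(t-t')|k|^2}$. The main point of care is matching the spatial regularities on the right-hand side — in particular ensuring that the $|k|^{-1}$ gained in the $L^2_t$ estimate of $B^0_\pm$ is consistent with the indices stated for the initial-data terms — and tracking the $\eps$-weights of the different ingredients of $\mathcal X^\eps_T$.
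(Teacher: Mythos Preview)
Your proposal is correct and follows essentially the same route as the paper's proof: decompose $I^\eps_\pm$ into the two boundary contributions and the interior term where $\partial_{t'}$ hits the parabolic factor, use the uniform boundedness of $\mathcal P^1_{\rm wave\pm}$ from $L^2_v$ to $L^2_v$ and from $(H^{s,*}_v)'$ to $H^{s,*}_v$, apply Young's inequality in time to the convolution kernel $|k|^2 e^{-\nu(t-t')|k|^2}$, and then weight and sum in $k$. The paper arrives at the same pointwise bounds (citing \cite[Proof of Lemma~6.5]{CC}) and then assembles the $\mathcal X^\eps_T$ norm exactly as you describe; your explicit derivation of those pointwise bounds is a faithful expansion of that step.
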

 
\begin{proof} 
 
Since $\mathcal P_{\rm \small wave\pm}^1$ is bounded from $L^2_v$ into $L^2_v$ as well as from $(H^{s,*}_v)'$ into $H^{s,*}_v$, we obtain from ~\cite[Proof of Lemma~6.5]{CC} that, for all $t \in [0,T]$ and $k \in \Z^3 \setminus \{ 0 \}$,
$$
\begin{aligned}
\| \widehat{I^\eps_{\pm}} (t,k) \|_{L^2_v} 
&\lesssim \eps  \int_0^t |k|^2 e^{-\nu_{\rm \small wave\pm} (t-t') | k|^2}  \|\widehat \Gamma( g^\eps, g^\eps)(t',k)  \|_{ L^2_v } \, \d t'  \\
&\quad
+\eps  \|\widehat \Gamma( g^\eps, g^\eps)(t,k)  \|_{ L^2_v }
+ \eps   e^{-\nu_{\rm \small wave\pm} t| k|^2}  \|\widehat \Gamma (g^\eps_{\mathrm{in}}, g^\eps_{\mathrm{in}}) (k)  \|_{ L^2_v } \,  ,
\end{aligned}
$$
and
$$
\begin{aligned}
\| \widehat{I^\eps_{\pm}} (t,k) \|_{H^{s,*}_v} 
&\lesssim \eps  \int_0^t |k|^2 e^{-\nu_{\rm \small wave\pm} (t-t') | k|^2}  \|\widehat \Gamma( g^\eps, g^\eps)(t',k)  \|_{ (H^{s,*}_v)' } \, \d t'  \\
&\quad
+\eps  \|\widehat \Gamma( g^\eps, g^\eps)(t,k)  \|_{(H^{s,*}_v)' }
+ \eps   e^{-\nu_{\rm \small wave\pm} t| k|^2}  \|\widehat \Gamma( g^\eps_{\mathrm{in}}, g^\eps_{\mathrm{in}})(k)  \|_{ (H^{s,*}_v)' } \,  .
\end{aligned}
$$
We recall that~$k \neq 0$.
Applying Young's convolution in time $L^1_T * L^\infty_T \subset L^\infty_T $ and, respectively, $L^1_T * L^2_T \subset L^2_T $, we therefore obtain
$$
\begin{aligned}
\| \widehat{I^\eps_{\pm}} (k) \|_{L^\infty_T L^2_v} 
&\lesssim \eps \|\Gamma( g^\eps, g^\eps) (k) \|_{ L^\infty_T L^2_v }
\end{aligned}
$$
and
$$
\begin{aligned}
\| \widehat{I^\eps_{\pm}} (k) \|_{L^2_T H^{s,*}_v} 
&\lesssim \eps \|\Gamma( g^\eps, g^\eps) (k) \|_{ L^2_T (H^{s,*}_v)' }
+ \eps  \|\widehat \Gamma( g^\eps_{\mathrm{in}}, g^\eps_{\mathrm{in}})(k)  \|_{ (H^{s,*}_v)' }\,  .
\end{aligned}
$$ 
This implies
$$
\begin{aligned}
& \eps^{\beta} \|  I^\eps_{\pm} \|_{\widetilde L^\infty_T H^\ell_x L^2_v} 
+\frac{\eps^\beta}{\sqrt\eps}\| \mathbf P_0^\perp I^\eps_{\pm} \|_{L^2_T H^\ell_x H^{s,*}_v}
+\eps^{\beta} \| \mathbf P_0 I^\eps_{\pm} \|_{L^2_T H^\ell_x H^{s,*}_v} \\
&\quad
\lesssim 
\eps^{1+\beta} \| \Gamma (g^\eps,g^\eps) \|_{\widetilde L^\infty_T H^{\ell}_x L^2_v} + 
\left( \eps^{\frac12+\beta} + \eps^{1+ \beta} \right)  \left( \| \Gamma (g^\eps , g^\eps) \|_{L^2_T H^\ell_x (H^{s,*}_v)'} + \| \Gamma (g^\eps_{\mathrm{in}}, g^\eps_{\mathrm{in}}) \|_{ H^{\ell}_x (H^{s,*}_v)'}\right)\,  , 
\end{aligned}
$$
as well as
$$
\begin{aligned}
&  \|  I^\eps_{\pm} \|_{\widetilde L^\infty_T H^{\frac12}_x L^2_v} 
+\frac{1}{\sqrt\eps}\| \mathbf P_0^\perp I^\eps_{\pm} \|_{L^2_T H^\frac32_x H^{s,*}_v}
+ \| \mathbf P_0 I^\eps_{\pm} \|_{L^2_T H^\frac32_x H^{s,*}_v} \\
&\quad
\lesssim 
\eps \| \Gamma (g^\eps,g^\eps) \|_{\widetilde L^\infty_T H^{\frac12}_x L^2_v} + \left( \sqrt \eps + \eps \right)  \left( \| \Gamma (g^\eps , g^\eps) \|_{L^2_T H^\frac32_x (H^{s,*}_v)'} + \| \Gamma (g^\eps_{\mathrm{in}}, g^\eps_{\mathrm{in}}) \|_{ H^\frac32_x (H^{s,*}_v)'}\right)\,  .
\end{aligned}
$$ 
Lemma~\ref{lem:source_Psi_eps1_wave_I} is proved. \end{proof}
Recalling the definition of $J^\eps_{\pm}$ in \eqref{eq:Jeps}, we have the following result.
\begin{lem}\label{lem:source_Psi_eps1_wave_J}
There holds
$$
\begin{aligned}
\|  J^{\eps}_{\pm}  \|_{\mathcal X^\eps_T}
&\lesssim \eps^{1+\beta} \| \Gamma_{\mathrm{sym}} (g^\eps, \partial_t g^\eps) \|_{\widetilde L^\infty_T H^{\ell-2}_x L^2_v} 
+  \eps^{\frac12+\beta}  \| \Gamma_{\mathrm{sym}} (g^\eps, \partial_t g^\eps) \|_{L^2_T H^{\ell-2}_x (H^{s,*}_v)'} \\
&\quad
+ \eps \| \Gamma_{\mathrm{sym}} (g^\eps, \partial_t g^\eps) \|_{\widetilde L^\infty_T H^{-\frac32}_x L^2_v}
+ \sqrt \eps  \| \Gamma_{\mathrm{sym}} (g^\eps, \partial_t g^\eps) \|_{L^2_T H^{-\frac12}_x (H^{s,*}_v)'}  \, .
\end{aligned}
$$
\end{lem}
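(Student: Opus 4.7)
The plan is to mirror the proof of Proposition~\ref{prop:Psiepsflat}, viewing $J^\eps_\pm$ as a cleaned-up variant of $\Psi^{\eps 1,\flat}_{\rm wave}[g^\eps,g^\eps]$ produced by the integration by parts leading to~\eqref{eq:Jeps}. Using the bilinearity identity $\partial_{t'}\widehat\Gamma(g^\eps,g^\eps)(t',k) = 2\widehat\Gamma_{\mathrm{sym}}(g^\eps,\partial_t g^\eps)(t',k)$, one sees that $\widehat{J^\eps_\pm}(t,k)$ has essentially the same Fourier representation as $\widehat{\Psi^{\eps 1,\flat}_{\rm wave}}[g^\eps,g^\eps](t,k)$ except for three features: an overall factor $\eps$, the absence of the $|k|$ prefactor (which has been moved to the denominator by the integration by parts and then cancelled), and the replacement of $\Gamma_{\mathrm{sym}}(g^\eps,g^\eps)$ by $\Gamma_{\mathrm{sym}}(g^\eps,\partial_t g^\eps)$. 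Compared with Proposition~\ref{prop:Psiepsflat}, this will generate a gain of two derivatives in~$x$ (instead of one) together with the extra~$\eps$.

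First I would absorb the oscillating factor $e^{\pm ic|k|(t-t')/\eps}$ in modulus and retain only the heat kernel $e^{-\nu_{\rm\small wave\pm}(t-t')|k|^2}$, whose $L^1_T$-norm in time is bounded by $C|k|^{-2}$. Using the boundedness of $\mathcal P^1_{\rm\small wave\pm}(k/|k|)$ from $L^2_v$ to $L^2_v$ and from $(H^{s,*}_v)'$ to $H^{s,*}_v$ uniformly in $\eps|k|\le\kappa$ (recalled in Section~\ref{sec:prelim}), I would then apply Young's convolution inequality in time in the forms $L^1_T \star L^\infty_T \subset L^\infty_T$ and $L^1_T \star L^2_T \subset L^2_T$ to deduce the pointwise-in-$k$ bounds
\[
\bigl\| \widehat{J^\eps_\pm}(\cdot,k) \bigr\|_{L^\infty_T L^2_v}
\lesssim \eps\, |k|^{-2} \bigl\| \widehat{\Gamma_{\mathrm{sym}}}(g^\eps,\partial_t g^\eps)(\cdot,k) \bigr\|_{L^\infty_T L^2_v}
\]
and
\[
\bigl\| \widehat{J^\eps_\pm}(\cdot,k) \bigr\|_{L^2_T H^{s,*}_v}
\lesssim \eps\, |k|^{-2} \bigl\| \widehat{\Gamma_{\mathrm{sym}}}(g^\eps,\partial_t g^\eps)(\cdot,k) \bigr\|_{L^2_T (H^{s,*}_v)'} \,.
\]

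The final step is to assemble the six pieces of the $\mathcal X^\eps_T$-norm defined in~\eqref{def:norm_XT} by multiplying the above pointwise bounds by $\la k \ra^m$ for $m \in \{1/2, 3/2, \ell\}$, summing in $\ell^2_k$, and using that $\mathbf P_0$ and $\mathbf P_0^\perp$ are bounded on $L^2_v$ and $H^{s,*}_v$. The gain of two powers of $|k|$ explains the shift from $H^{1/2}_x$ to $H^{-3/2}_x$ and from $H^\ell_x$ to $H^{\ell-2}_x$ in the two $\widetilde L^\infty$ pieces, and from $H^{3/2}_x$ to $H^{-1/2}_x$ and from $H^\ell_x$ to $H^{\ell-2}_x$ in the two $L^2_T$ pieces. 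Tracking the weights $\eps^\beta$ and $\eps^{-1/2}$ appearing in~\eqref{def:norm_XT}, the four resulting prefactors are $\eps$, $\eps^{1+\beta}$, $\sqrt\eps$ (the $\eps$ from the estimate combined with the $\eps^{-1/2}$ weight on the $\mathbf P_0^\perp$ term at $H^{3/2}_x$) and $\eps^{1/2+\beta}$, which are precisely those announced in the lemma. The main difficulty is purely bookkeeping: ensuring that each of the six contributions to $\|J^\eps_\pm\|_{\mathcal X^\eps_T}$ is dominated by one of the four terms on the right-hand side, with the sharp power of $\eps$; no new analytic ingredient beyond the Young-type argument of Proposition~\ref{prop:Psiepsflat} is required.
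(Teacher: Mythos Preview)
Your proposal is correct and follows essentially the same approach as the paper: establish the pointwise-in-$k$ bounds $\|\widehat{J^\eps_\pm}(\cdot,k)\|_{L^\infty_T L^2_v}\lesssim\eps|k|^{-2}\|\widehat\Gamma_{\mathrm{sym}}(g^\eps,\partial_t g^\eps)(\cdot,k)\|_{L^\infty_T L^2_v}$ and the analogous $L^2_T H^{s,*}_v$ bound via the boundedness of $\mathcal P^1_{\rm wave\pm}$ and Young's inequality in time, then sum against $\langle k\rangle^{2m}$ and track the $\eps$-weights in~\eqref{def:norm_XT}. The paper's proof is organized identically, with the same pointwise estimates and the same bookkeeping of the six pieces of the $\mathcal X^\eps_T$-norm.
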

\begin{proof}
Starting from \eqref{eq:Jeps}, we use the fact that $\mathcal P_{\rm \small wave\pm}^1$ is bounded from $L^2_v$ into $L^2_v$ as well as from $(H^{s,*}_v)'$ into $H^{s,*}_v$ to obtain that, for all $t \in [0,T]$ and $k \in \Z^3 \setminus \{ 0 \}$,
$$
\| \widehat{J^\eps_\pm}(t,k) \|_{L^2_v} 
\lesssim  \eps \int_0^t |k|^{2} e^{- \nu_{\rm \small wave \pm} (t-t') | k|^2} |k|^{-2} \| \partial_{t} \widehat {\Gamma}( g^\eps, g^\eps)(t', k) \|_{L^2_v} \, \d t'
$$
and
$$
\| \widehat{J^\eps_\pm}(t,k) \|_{H^{s,*}_v} 
\lesssim  \eps \int_0^t |k|^{2} e^{- \nu_{\rm \small wave \pm} (t-t') | k|^2} |k|^{-2}\| \partial_{t} \widehat {\Gamma}( g^\eps, g^\eps)(t', k) \|_{(H^{s,*}_v)'} \, \d t' \, .
$$
Using Young's inequality for convolutions as in the proof of Lemma~\ref{lem:source_Psi_eps1_wave_I} together with the fact that $\partial_{t} \widehat {\Gamma}( g^\eps, g^\eps) = \widehat {\Gamma}( \partial_{t} g^\eps, g^\eps) +  \widehat {\Gamma}( g^\eps, \partial_{t} g^\eps)$ , we thus deduce
$$
\| \widehat{J^\eps_\pm}(t,k) \|_{L^\infty_T L^2_v} 
\lesssim  \eps |k|^{-2} \| \widehat{\Gamma}_{\mathrm{sym}}( g^\eps, \partial_{t} g^\eps)(k) \|_{L^\infty_T L^2_v}
$$
and
$$
\| \widehat{J^\eps_\pm}(t,k) \|_{L^2_T H^{s,*}_v} 
\lesssim  \eps |k|^{-2} \| \widehat{\Gamma}_{\mathrm{sym}}( g^\eps, \partial_{t} g^\eps)(k) \|_{L^2_T (H^{s,*}_v)'} \, .
$$
These two inequalities imply
$$
\begin{aligned}
& \eps^{\beta} \|  J^\eps_{\pm} \|_{\widetilde L^\infty_T H^\ell_x L^2_v} 
+\frac{\eps^\beta}{\sqrt\eps}\| \mathbf P_0^\perp J^\eps_{\pm} \|_{L^2_T H^\ell_x H^{s,*}_v}
+\eps^{\beta} \| \mathbf P_0 J^\eps_{\pm} \|_{L^2_T H^\ell_x H^{s,*}_v} \\
&\quad
\lesssim 
\eps^{1+\beta} \| \Gamma_{\mathrm{sym}} (g^\eps,\partial_t g^\eps) \|_{\widetilde L^\infty_T H^{\ell-2}_x L^2_v} + 
\left( \eps^{\frac12+\beta} + \eps^{1+ \beta} \right)  \| \Gamma_{\mathrm{sym}} (g^\eps , \partial_t g^\eps) \|_{L^2_T H^{\ell-2}_x (H^{s,*}_v)'} \, ,
\end{aligned}
$$
and also
$$
\begin{aligned}
&  \| J^\eps_{\pm} \|_{\widetilde L^\infty_T H^{\frac12}_x L^2_v} 
+\frac{1}{\sqrt\eps}\| \mathbf P_0^\perp J^\eps_{\pm} \|_{L^2_T H^\frac32_x H^{s,*}_v}
+ \| \mathbf P_0 J^\eps_{\pm} \|_{L^2_T H^\frac32_x H^{s,*}_v} \\
&\quad
\lesssim 
\eps \| \Gamma_{\mathrm{sym}} (g^\eps, \partial_t g^\eps) \|_{\widetilde L^\infty_T H^{-\frac32}_x L^2_v} + \left( \sqrt \eps + \eps \right)   \| \Gamma_{\mathrm{sym}} (g^\eps , \partial_t g^\eps) \|_{L^2_T H^{-\frac12}_x (H^{s,*}_v)'} \, .
\end{aligned}
$$ 
Lemma~\ref{lem:source_Psi_eps1_wave_J} is proved. \end{proof}
The contributions of the terms~$ \Psi^{\eps 1, \sharp}_{\rm NSF}$, $\widetilde \Psi^{\eps1,\flat}$ and~$\Psi^{\eps2,\flat}$ in \eqref{eq:source_decomposition} are easier to   obtain.

\begin{lem}\label{lem:source_remainder}
There holds
$$
\begin{aligned}
&\|  \Psi^{\eps 1, \sharp}_{\rm NSF}[g^\eps,g^\eps] \|_{\mathcal X^\eps_T} 
+ \| \widetilde \Psi^{\eps1,\flat} [g^\eps,g^\eps] \|_{\mathcal X^\eps_T}
+ \|  \Psi^{\eps2,\flat} [g^\eps,g^\eps] \|_{\mathcal X^\eps_T} \\
&\quad
\lesssim \eps^{1+\beta} \| \Gamma (g^\eps, g^\eps) \|_{\widetilde L^\infty_T H^\ell_x L^2_v} 
+  \eps^{\frac12+\beta}  \| \Gamma (g^\eps, g^\eps) \|_{L^2_T H^{\ell}_x (H^{s,*}_v)'} \\
&\quad\quad
+ \eps \| \Gamma (g^\eps, g^\eps) \|_{\widetilde L^\infty_T H^{\frac12}_x L^2_v}
+ \sqrt \eps  \| \Gamma (g^\eps, g^\eps) \|_{L^2_T H^\frac32_x (H^{s,*}_v)'} \,   .
\end{aligned}
$$
\end{lem}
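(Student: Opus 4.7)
The plan is to show that each of the three operators $\Psi^{\eps 1,\sharp}_{\rm NSF}$, $\widetilde\Psi^{\eps 1,\flat}$ and $\Psi^{\eps 2,\flat}$ carries one extra power of $\eps$ in its symbol compared to the heat-type piece $\Psi_{\rm NSF}$; then Young's inequality in time, used exactly as in the proof of Proposition~\ref{prop:Psiepsflat} but with an $\eps$-gain, yields bounds of the same form as the ones obtained for $\Psi^{\eps,\sharp}$ in Lemma~\ref{lem:source_Psi_eps_sharp}. More precisely, the goal in each case is to prove the pointwise-in-$k$ estimate
\begin{equation*}
\|\widehat{(\text{term})}(t,k)\|_{X} \lesssim \eps\, |k|^2 \int_0^t e^{-\nu_\star |k|^2 (t-t')/2}\, \|\widehat{\Gamma_{\mathrm{sym}}}(g^\eps,g^\eps)(t',k)\|_{Y}\, \d t',
\end{equation*}
with $(X,Y) \in \{(L^2_v,L^2_v),\,(H^{s,*}_v,(H^{s,*}_v)')\}$, valid for $k\neq 0$ and uniformly in $\eps$.

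For $\Psi^{\eps 2,\flat}$ the $\eps$ is already explicit in the symbol, and we use that on the support of $\chi(\eps|k|/\kappa)$ the main exponential has modulus bounded by $e^{-\nu_\star|k|^2(t-t')/2}$ (by~\eqref{NS heat eigenvalues}--\eqref{wave eigenvalues}), together with the uniform boundedness of $\mathcal P_\star^2(\eps k)$ both from $L^2_v$ into $L^2_v$ and from $(H^{s,*}_v)'$ into $H^{s,*}_v$ for $\eps|k|\le\kappa$. For $\Psi^{\eps 1,\sharp}_{\rm NSF}$ we exploit that $\chi(0)=1$, so $|\chi(\eps|k|/\kappa)-1|\lesssim \eps|k|$ globally (Taylor near zero, boundedness elsewhere), providing the missing $\eps|k|$ factor. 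For $\widetilde\Psi^{\eps 1,\flat}$ the extra smallness is hidden in $e^{(t-t')\gamma_\star(\eps k)/\eps^2}-1$, and this is the most delicate case: the analytic spectral expansion of $\widehat\Lambda^1(k)$ at $k=0$ (as recalled from~\cite{Ellis-Pinsky,Gervais-Lods}) refines the quadratic bounds~\eqref{NS heat eigenvalues}--\eqref{wave eigenvalues} into $\gamma_\star(k) = O(|k|^3)$ near the origin; consequently on the support of $\chi$ we have $|\gamma_\star(\eps k)/\eps^2|\lesssim \eps|k|^3 \lesssim \kappa |k|^2$, which upon taking $\kappa$ small is dominated by $\nu_\star|k|^2/2$. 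A first-order Taylor expansion of the exponential then yields
\begin{equation*}
\bigl|e^{(t-t')\gamma_\star(\eps k)/\eps^2}-1\bigr|\, e^{-\nu_\star|k|^2(t-t')} \lesssim (t-t')\eps|k|^3\, e^{-\nu_\star|k|^2(t-t')/2} \lesssim \eps|k|\, e^{-\nu_\star|k|^2(t-t')/4},
\end{equation*}
where in the last step we used that $s|k|^2 e^{-c s |k|^2}$ is uniformly bounded. Combining with the prefactor $|k|\mathcal P_\star^1(k/|k|)$, we recover the announced pointwise bound.

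With these pointwise bounds in hand, Young's inequality in time applied to the kernel $s\mapsto \eps|k|^2 e^{-\nu_\star|k|^2 s/2}$ (whose $L^1_s$-norm is $\lesssim \eps$ uniformly in $k$) gives
\begin{equation*}
\|\widehat{(\text{term})}(k)\|_{L^\infty_T L^2_v} \lesssim \eps\,\|\widehat{\Gamma_{\mathrm{sym}}}(g^\eps,g^\eps)(k)\|_{L^\infty_T L^2_v}, \qquad \|\widehat{(\text{term})}(k)\|_{L^2_T H^{s,*}_v} \lesssim \eps\,\|\widehat{\Gamma_{\mathrm{sym}}}(g^\eps,g^\eps)(k)\|_{L^2_T (H^{s,*}_v)'}.
\end{equation*}
Summing in $k$ with Sobolev weights $\langle k\rangle^{2m}$ for $m \in \{1/2,\, 3/2,\, \ell\}$ and reading off the six components of $\|\cdot\|_{\mathcal X^\eps_T}$ from~\eqref{def:norm_XT}, we obtain exactly the four contributions stated in the lemma (the $\mathbf P_0$/$\mathbf P_0^\perp$ split plays no role here since both terms satisfy the same bound, and the factors of $\eps^\beta$ and $\eps^{-1/2}$ appearing in~\eqref{def:norm_XT} convert $\eps$ into either $\eps^{1+\beta}$, $\eps^{1/2+\beta}$, $\eps$ or $\sqrt\eps$ as claimed). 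The main obstacle will thus be the analysis of $\widetilde\Psi^{\eps 1,\flat}$: one must go beyond the quadratic bounds~\eqref{NS heat eigenvalues}--\eqref{wave eigenvalues} on the spectral remainder $\gamma_\star$ and carefully choose $\kappa$ so that the perturbation is absorbed into the heat semigroup.
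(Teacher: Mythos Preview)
Your proof is correct and follows essentially the same approach as the paper's: extract an $\eps|k|$ factor from each symbol (via $|\chi-1|\lesssim\eps|k|$, the explicit $\eps$ in $\Psi^{\eps2,\flat}$, and a mean-value estimate on $e^{(t-t')\gamma_\star(\eps k)/\eps^2}-1$), then conclude by Young's inequality in time as in the proof of Lemma~\ref{lem:source_Psi_eps1_wave_I}. You also correctly identify that the argument for $\widetilde\Psi^{\eps1,\flat}$ relies on the cubic bound $\gamma_\star(k)=O(|k|^3)$ from the analytic spectral expansion, a point the paper uses implicitly when it writes $|e^{t\gamma_\star(\eps k)/\eps^2}-1|\,|k|\lesssim t\eps|k|^4$.
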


\begin{proof}
Recall that $\mathcal P_{\rm \small wave\pm}^1$ is bounded from $L^2_v$ into $L^2_v$ as well as from $(H^{s,*}_v)'$ into $H^{s,*}_v$. Let $t \in [0,T]$ and $k \in \Z^3 \setminus \{ 0 \}$. 
Remarking that
$$
\left| \chi \left( \frac{\eps |k|}{\kappa} \right) - 1 \right| \lesssim \min \left( 1 , \eps |k| \right),
$$
we obtain that, where we denote $\nu_0 := \min (\nu_{\mathrm{NS}} , \nu_{\mathrm{heat}}) >0$,
$$
\begin{aligned}
\| \widehat \Psi^{\eps 1, \sharp}_{\rm NSF}[g^\eps,g^\eps] (t,k) \|_{L^2_v} 
&\lesssim \eps  \int_0^t |k|^2 e^{-\nu_{0} (t-t') | k|^2}  \|\widehat \Gamma( g^\eps, g^\eps)(t',k)  \|_{ L^2_v } \, \d t'  ,
\end{aligned}
$$
and
$$
\begin{aligned}
\| \widehat \Psi^{\eps 1, \sharp}_{\rm NSF}[g^\eps,g^\eps] (t,k) \|_{H^{s,*}_v} 
&\lesssim \eps  \int_0^t |k|^2 e^{-\nu_{0} (t-t') | k|^2}  \|\widehat \Gamma( g^\eps, g^\eps)(t',k)  \|_{ (H^{s,*}_v)' } \, \d t' . \end{aligned}
$$
Using that
$$
	\chi\Big (\frac{\eps| k|}\kappa\Big) e^{- \nu_\star t | k|^2} \left|e^{t \frac{\gamma_\star(\eps| k|)}{\eps^2}}-1 \right| | k|
	\lesssim \chi\Big (\frac{\eps| k|}\kappa\Big) e^{-  \frac{\nu_\star  }2 t | k|^2} t \eps  | k|^4
	\lesssim \eps  | k|^2 \chi\Big (\frac{\eps| k|}\kappa\Big) e^{-\frac{\nu_\star }{4} t | k|^2}\,,
$$
we also get, denoting  $\nu_1 = \min (\nu_{\mathrm{NS}} , \nu_{\mathrm{heat}} , \nu_{\mathrm{wave} \pm}) >0$,
$$
\begin{aligned}
\|   \widehat{\widetilde \Psi}\vphantom\Psi^{\eps1,\flat} [g^\eps,g^\eps] (t,k) \|_{L^2_v} 
&\lesssim \eps  \int_0^t |k|^2 e^{-\frac{\nu_{1}}{4} (t-t') | k|^2}  \|\widehat \Gamma( g^\eps, g^\eps)(t',k)  \|_{ L^2_v } \, \d t'  ,
\end{aligned}
$$
and
$$
\begin{aligned}
\|  \widehat{\widetilde \Psi}\vphantom\Psi^{\eps1,\flat} [g^\eps,g^\eps] (t,k) \|_{H^{s,*}_v} 
&\lesssim \eps  \int_0^t |k|^2 e^{-\frac{\nu_{1}}{4} (t-t') | k|^2}  \|\widehat \Gamma( g^\eps, g^\eps)(t',k)  \|_{ (H^{s,*}_v)' } \, \d t' . \end{aligned}
$$
Finally, observing that
$$
	\chi\Big (\frac{\eps| k|}\kappa\Big) e^{- \nu_\star t | k|^2 + t \frac{\gamma_\star(\eps| k|)}{\eps^2}} \eps | k|^2
	\lesssim \chi\Big (\frac{\eps| k|}\kappa\Big) e^{-  \frac{\nu_\star }2 t | k|^2} \eps  | k|^2
	\lesssim \eps | k|^2 \chi\Big (\frac{\eps| k|}\kappa\Big) e^{-\frac{\nu_\star}{2} t | k|^2} ,
$$
we also deduce
$$
\begin{aligned}
\|   \widehat{\widetilde \Psi}\vphantom\Psi^{\eps1,\flat}[g^\eps,g^\eps] (t,k) \|_{L^2_v} 
&\lesssim \eps  \int_0^t |k|^2 e^{-\frac{\nu_{1}}{2} (t-t') | k|^2}  \|\widehat \Gamma( g^\eps, g^\eps)(t',k)  \|_{ L^2_v } \, \d t'  ,
\end{aligned}
$$
and
$$
\begin{aligned}
\|   \widehat{\widetilde \Psi}\vphantom\Psi^{\eps1,\flat}[g^\eps,g^\eps] (t,k) \|_{H^{s,*}_v} 
&\lesssim \eps  \int_0^t |k|^2 e^{-\frac{\nu_{1}}{2} (t-t') | k|^2}  \|\widehat \Gamma( g^\eps, g^\eps)(t',k)  \|_{ (H^{s,*}_v)' } \, \d t' . \end{aligned}
$$
{The term $\Psi^{\eps 2,\flat}$ is treated in the same way using the boundedness properties of $\mathcal P^2_\star(\eps k)$ uniformly in $\eps |k| \leq \kappa$.} We can then conclude by arguing as in the proof of Lemma~\ref{lem:source_Psi_eps1_wave_I}.
\end{proof}

We can now gather the contributions of Lemmas~\ref{lem:source_Psi_eps_sharp}, \ref{lem:source_Psi_eps1_wave_I}, \ref{lem:source_Psi_eps1_wave_J} and \ref{lem:source_remainder} to conclude the proof of Proposition~\ref{estimates on delta eps}--(3).
We first observe that from Proposition~\ref{prop:nonlinear_macro}, since $\P_0 g^\eps = g^\eps$, we obtain 
$$
\begin{aligned}
\| \Gamma (g^\eps,g^\eps) \|_{\widetilde L^\infty_T H^{\ell}_x L^2_v} 
&\lesssim \| g^\eps \|_{\widetilde L^\infty_T H^{\ell}_x L^2_v}^2 \,,
\end{aligned}
$$
and from Proposition~\ref{prop:nonlinear} we have
$$
\begin{aligned}
\| \Gamma (g^\eps , g^\eps) \|_{L^2_T H^\ell_x (H^{s,*}_v)'}
&\lesssim \| g^\eps \|_{\widetilde L^\infty_T H^\ell_x L^2_v} \| g^\eps \|_{L^2_T H^\ell_x L^2_v}.
\end{aligned}
$$
Moreover from Proposition~\ref{prop:nonlinear_macro} again, we obtain 
$$
\begin{aligned}
\| \Gamma (g^\eps,g^\eps) \|_{\widetilde L^\infty_T H^{\frac12}_x L^2_v} 
&\lesssim \| g^\eps \|_{\widetilde L^\infty_T H^{\frac12}_x L^2_v} \|g^\eps\|_{\widetilde L^\infty_T H^\frac32_x L^2_v} \,,
\end{aligned}
$$
and thanks to Proposition~\ref{prop:nonlinear} again, we also get
$$
\begin{aligned}
\| \Gamma (g^\eps , g^\eps) \|_{L^2_T H^\frac32_x (H^{s,*}_v)'} 
&\lesssim \| g^\eps \|_{\widetilde L^\infty_T H^{\frac12}_x L^2_v} \|g^\eps\|_{L^2_T H^{\frac52}_x L^2_v} \, .
\end{aligned}
$$
By Proposition~\ref{prop:nonlinear_macro} there holds
$$
\begin{aligned}
\| \Gamma (g^\eps_{\mathrm{in}} , g^\eps_{\mathrm{in}}) \|_{H^\ell_x (H^{s,*}_v)'}
&\lesssim \| g^\eps_{\mathrm{in}} \|_{H^\ell_x L^2_v}^2 \, .
\end{aligned}
$$
and
$$
\begin{aligned}
\| \Gamma (g^\eps_{\mathrm{in}} , g^\eps_{\mathrm{in}} ) \|_{H^\frac32_x (H^{s,*}_v)'} 
&\lesssim \| g^\eps \|_{H^{\frac12}_x L^2_v} \|g^\eps\|_{H^{\frac52}_x L^2_v} \, .
\end{aligned}
$$
Therefore estimates~\eqref{eq:g_bound} and~\eqref{smallness geps} together with Lemma~\ref{lem:source_Psi_eps_sharp} yield
\begin{equation} \label{limit source1}
\begin{aligned}
\|  \Psi^{\eps,\sharp }  [  g^\eps, g^\eps]\|_{\mathcal X^\eps_T}
&\lesssim  \left( \eps^{\frac12+\beta -2\alpha(\ell-1)} + \eps^{\frac12-\alpha} \right) \|{ g_{\rm in}}\|_{H^{\frac12}_x L^2_v}^2  \exp \left( C \|{ g_{\rm in}}\|_{H^{\frac12}_x L^2_v}^2 \right) .
\end{aligned}
\end{equation}
Moreover, estimates~\eqref{eq:g_bound} and~\eqref{smallness geps} together with Lemma~\ref{lem:source_Psi_eps1_wave_I} and Lemma~\ref{lem:source_remainder} yield
\begin{equation} \label{limit source2}
\begin{aligned}
&\| I^\eps_{\pm} \|_{\mathcal X^\eps_T} 
+ \|  \Psi^{\eps 1, \sharp}_{\rm NSF}[g^\eps,g^\eps] \|_{\mathcal X^\eps_T} 
+ \|  \Psi^{\eps1,\flat} [g^\eps,g^\eps] \|_{\mathcal X^\eps_T}
+ \|  \Psi^{\eps2,\flat} [g^\eps,g^\eps] \|_{\mathcal X^\eps_T} \\
&\quad
\lesssim  \left(  
\eps^{\frac12+\beta -2\alpha(\ell-1)} 
+ \eps^{\frac12-\alpha} \right) \|{ g_{\rm in}}\|_{H^{\frac12}_x L^2_v}^2  \exp \left( C \|{ g_{\rm in}}\|_{H^{\frac12}_x L^2_v}^2 \right) \\
&\quad\quad
+ \left( \eps^{\frac12+\beta-2\alpha (\ell-\frac12)} + \eps^{\frac12- 2 \alpha} \right) \|{ g_{\rm in}}\|_{H^{\frac12}_x L^2_v}^2  \, .
\end{aligned}
\end{equation}
It only remains to investigate the contribution of the term $J^\eps_{\pm}$, and we recall that $3/2 < \ell \le 2$. 
We write
$$
\begin{aligned}
&\| \Gamma_{\mathrm{sym}} (g^\eps, \partial_t g^\eps) \|_{\widetilde L^\infty_T H^{\ell-2}_x L^2_v} + \| \Gamma_{\mathrm{sym}} (g^\eps, \partial_t g^\eps) \|_{\widetilde L^\infty_T H^{-\frac32}_x L^2_v} \\
&\quad\lesssim 
\| \Gamma_{\mathrm{sym}} (g^\eps, \partial_t g^\eps) \|_{\widetilde L^\infty_T L^2_x L^2_v} \\
&\quad\lesssim 
\| g^\eps \|_{\widetilde L^\infty_T H^{\ell}_x L^2_v} \| \partial_t g^\eps \|_{\widetilde L^\infty_T L^2_x L^2_v} 
\end{aligned}
$$
thanks to Proposition~\ref{prop:nonlinear_macro}, and also
$$
\begin{aligned}
&\| \Gamma_{\mathrm{sym}} (g^\eps , \partial_t g^\eps) \|_{L^2_T H^{\ell-2}_x (H^{s,*}_v)'} + \| \Gamma_{\mathrm{sym}} (g^\eps , \partial_t g^\eps) \|_{L^2_T H^{-\frac12}_x (H^{s,*}_v)'} \\
&\quad\lesssim 
\| \Gamma_{\mathrm{sym}} (g^\eps , \partial_t g^\eps) \|_{L^2_T L^2_x (H^{s,*}_v)'} \\
&\quad\lesssim 
\| g^\eps \|_{\widetilde L^\infty_T H^\ell_x L^2_v} \| \partial_t g^\eps \|_{L^2_T L^2_x L^2_v} \, ,
\end{aligned}
$$
using Proposition~\ref{prop:nonlinear}.
We now observe from \eqref{eq:NSF} that, for all $t \ge 0$ and $k \in \Z^3$,
$$
|\partial_t \widehat{g^\eps} (t,k)| \lesssim |k|^2 |\widehat{g^\eps} (t,k)| + |k| \left| \left( \widehat{g^\eps} (t,\cdot) * \widehat{g^\eps} (t,\cdot) \right) (k) \right|\, .
$$
We hence compute
$$
\| \partial_t g^\eps \|_{\widetilde L^\infty_T L^2_x L^2_v} \lesssim \| g^\eps \|_{\widetilde L^\infty_T H^{2}_x L^2_v} 
+ \| (g^\eps)^2 \|_{\widetilde L^\infty_T H^{1}_x L^2_v}\,  .
$$
Arguing as in the proof of Proposition~\ref{prop:nonlinear}, we have
$$
\| (g^\eps)^2 \|_{\widetilde L^\infty_T H^{1}_x L^2_v} 
\lesssim \| g^\eps \|_{\widetilde L^\infty_T H^{\frac12}_x L^2_v} \| g^\eps \|_{\widetilde L^\infty_T H^{2}_x L^2_v}\, ,
$$
and thus
$$
\| \partial_t g^\eps \|_{\widetilde L^\infty_T L^2_x L^2_v} 
\lesssim \left( 1+ \| g^\eps \|_{\widetilde L^\infty_T H^{\frac12}_x L^2_v} \right) \| g^\eps \|_{\widetilde L^\infty_T H^{2}_x L^2_v} \, .
$$
Similarly we obtain
$$
\| \partial_t g^\eps \|_{L^2_T L^2_x L^2_v} 
\lesssim \| g^\eps \|_{L^2_T H^{2}_x L^2_v} 
+ \| (g^\eps)^2 \|_{L^2_T H^{1}_x L^2_v}\,  ,
$$
then we get, as in the proof of Proposition~\ref{prop:nonlinear},
$$
\| (g^\eps)^2 \|_{L^2_T H^1_x L^2_v} 
\lesssim \| g^\eps \|_{\widetilde L^\infty_T H^{\frac12}_x L^2_v} \| g^\eps \|_{L^2_T H^{2}_x L^2_v} \, ,
$$
and finally
$$
\| \partial_t g^\eps \|_{L^2_T L^2_x L^2_v} 
\lesssim \left( 1+ \| g^\eps \|_{\widetilde L^\infty_T H^{\frac12}_x L^2_v} \right) \| g^\eps \|_{L^2_T H^{2}_x L^2_v} \, .
$$
Therefore estimates~\eqref{eq:g_bound} and~\eqref{smallness geps} together with Lemma~\ref{lem:source_Psi_eps1_wave_J} yield
\begin{equation} \label{limit source3}
\begin{aligned}
\| J^\eps_{\pm} \|_{\mathcal X^\eps_T}
&\lesssim  \left(  
\eps^{1-\alpha(\ell+1)} 
+ \eps^{\frac12-\alpha\ell}  \right) \\
&\qquad \times
\|{ g_{\rm in}}\|_{H^{\frac12}_x L^2_v}^2 \left(1+ \|{ g_{\rm in}}\|_{H^{\frac12}_x L^2_v} \right)  \exp \left( C \|{ g_{\rm in}}\|_{H^{\frac12}_x L^2_v}^2 \right) \,  .
\end{aligned}
\end{equation}
Finally gathering \eqref{limit source1}, \eqref{limit source2} and \eqref{limit source3} provides  (using the restrictions on~$\alpha,\ell$)
\begin{equation}\label{eq:source_final}
\begin{aligned}
\| \mathcal S^\eps \|_{\mathcal X^\eps_T} 
&\le 
\eps^{\frac12 - 2 \alpha} 
 \, \Phi \Big( \|{ g_{\rm in}}\|_{H^{\frac12}_x L^2_v} \Big)
\end{aligned}
\end{equation}
where $\Phi ( z) = C (1+z) z^2 e^{C z^2}$ and we have used the conditions on $\alpha$ and $\ell$. We hence obtain Proposition~\ref{estimates on delta eps}--(3).

\subsection{Estimates on the linear term $\mathcal L^{\eps}[\cdot]$}\label{sct:linear}

In this paragraph we prove Proposition~\ref{estimates on delta eps}--(4).
Consider $f \in \mathcal X_I^\eps$. 

\subsubsection{Linear estimates: high regularity}

First we have, from Corollary~\ref{cor:estimate_Psieps} and Proposition~\ref{prop:nonlinear} and  using additionally the fact that $g^\eps = \P_0 g^\eps$ and $H^{s,*}_v \subset L^2_v$,
$$
\begin{aligned}
&\frac{1}{\eps} \| \mathbf P_0^\perp \Psi^{\eps}[f,g^\eps] \|_{L^2_I H^\ell_x H^{s,*}_v}
+ \frac{1}{\eps} \| \Psi^{\eps,\sharp}[f,g^\eps] \|_{L^2_I H^\ell_x H^{s,*}_v}\\
&\qquad
\lesssim \| \Gamma_{\mathrm{sym}} [f,g^\eps]   \|_{L^2_I H^\ell_x (H^{s,*}_v)'} \\
&\qquad
\lesssim \| f \|_{L^2_I H^\ell_x H^{s,*}_v} \| g^\eps \|_{\widetilde L^\infty_I H^\ell_x L^2_v}\, .
\end{aligned}
$$
We then write $\mathbf P_0 \Psi^\eps = \mathbf P_0 \Psi^{\eps,\sharp} + \mathbf P_0 \Psi^{\eps,\flat}$. We compute, thanks to Proposition~\ref{prop:Psiepsflat} and Proposition~\ref{prop:nonlinear},
$$
\begin{aligned}
&\| \Psi^{\eps,\flat} [f,g^\eps] \|_{L^2_I H^\ell_x H^{s,*}_v} \\
&\qquad
\lesssim \| \Gamma_{\mathrm{sym}} [\mathbf P_0^\perp f,g^\eps]  + \Gamma_{\mathrm{sym}} [\mathbf P_0 f,g^\eps]   \|_{L^2_I H^{\ell-1}_x (H^{s,*}_v)'} \\
&\qquad
\lesssim \| \mathbf P_0^\perp f \|_{L^2_I H^\ell_x H^{s,*}_v} \| g^\eps \|_{\widetilde L^\infty_I H^{\frac12}_x L^2_v} 
+ \| \mathbf P_0^\perp f \|_{\widetilde L^\infty_I H^{\frac12}_x L^2_v} \| g^\eps \|_{L^2_I H^\ell_x L^2_v} \\
&\qquad \quad+  \| \mathbf P_0^\perp f \|_{L^2_I H^\frac32_x H^{s,*}_v} \| g^\eps \|_{\widetilde L^\infty_I H^{\ell}_x L^2_v} 
+ \| \mathbf P_0 f \|_{\widetilde L^4_I H^{\ell-\frac12}_x L^2_v} \| g^\eps \|_{\widetilde L^4_I H^{1}_x L^2_v} \\
&\qquad\quad
+ \| \mathbf P_0 f \|_{\widetilde L^\infty_I H^{\frac12}_x L^2_v} \| g^\eps \|_{L^2_I H^\ell_x L^2_v} \\
&\qquad
\lesssim 
\| \mathbf P_0^\perp f \|_{L^2_I H^\ell_x  { H^{s,*}_v }L^2_v} \| g^\eps \|_{\widetilde L^\infty_I H^{\frac12}_x L^2_v} 
+  \| \mathbf P_0^\perp f \|_{L^2_I H^\frac32_x H^{s,*}_v} \| g^\eps \|_{\widetilde L^\infty_I H^{\ell}_x L^2_v} 
\\
&\qquad\quad
+ \| f \|_{\widetilde L^\infty_I H^{\frac12}_x L^2_v} \| g^\eps \|_{L^2_I H^\ell_x L^2_v}
+ \| \mathbf P_0 f \|_{\widetilde L^\infty_I H^{ \ell-1}_x L^2_v}^{\frac12} \| \mathbf P_0 f \|_{L^2_I H^\ell_x H^{s,*}_v}^{\frac12} \| g^\eps \|_{\widetilde L^4_I H^{1}_x L^2_v} \,  ,
\end{aligned}
$$
where we have used the interpolation inequality~(\ref{eq:interpolation})  {as well as the fact that}
{
$$
\begin{aligned}
&\|g^\eps\|_{\widetilde L^\infty_I H^{\ell-1+0}_x L^2_v} \|\mathbf P_0^\perp f\|_{L^2_I H^{\frac32-0}_x H^{s,*}_v}
+ \|g^\eps\|_{\widetilde L^\infty_I H^{\ell-1}_x L^2_v} \|\mathbf P_0^\perp f\|_{L^2_I H^{\frac32}_x H^{s,*}_v} \\
&\quad \lesssim
\|g^\eps\|_{\widetilde L^\infty_I H^{\ell}_x L^2_v} \|\mathbf P_0^\perp f\|_{L^2_I H^{\frac32}_x H^{s,*}_v}\,.
\end{aligned}
$$
} 
Therefore we get
$$
\begin{aligned}
&\frac{\eps^\beta}{\sqrt\eps} \| \mathbf P_0^\perp \Psi^{\eps}[f,g^\eps] \|_{L^2_I H^\ell_x H^{s,*}_v} + \eps^\beta \| \mathbf P_0 \Psi^{\eps}[f,g^\eps] \|_{L^2_I H^\ell_x H^{s,*}_v} \\
&\qquad
\lesssim \eps^{\beta} \| f \|_{L^2_I H^\ell_x H^{s,*}_v} \sqrt\eps \| g^\eps \|_{\widetilde L^\infty_I H^\ell_x L^2_v}
+ \frac{\eps^\beta}{\sqrt\eps} \| \mathbf P_0^\perp f \|_{L^2_I H^\ell_x H^{s,*}_v} \sqrt\eps \| g^\eps \|_{\widetilde L^\infty_I H^{\frac12}_x L^2_v}\\
&\qquad\quad
+ \| f \|_{\widetilde L^\infty_I H^{\frac12}_x L^2_v} \eps^\beta \| g^\eps \|_{L^2_I H^\ell_x L^2_v}
+ \frac{1}{\sqrt\eps} \| \mathbf P_0^\perp f \|_{L^2_I H^\frac32_x H^{s,*}_v} \eps^{\beta+\frac12} \| g^\eps \|_{\widetilde L^\infty_I H^\ell_x L^2_v}\\
&\qquad\quad
+ \eps^{\frac{\beta}{2}} \| \mathbf P_0 f \|_{\widetilde L^\infty_I H^\ell_x L^2_v}^{\frac12} \eps^{\frac{\beta}{2}} \| \mathbf P_0 f \|_{L^2_I H^\ell_x H^{s,*}_v}^{\frac12}\| g^\eps \|_{\widetilde L^4_I H^{1}_x L^2_v}\, .
\end{aligned}
$$
We now investigate the $\widetilde L^\infty_I$ norm by writing $$\Psi^\eps[f,g^\eps] = \Psi^\eps[\mathbf P_0^\perp f,g^\eps] + \Psi^{\eps,\sharp}[\mathbf P_0 f,g^\eps] + \Psi^{\eps,\flat}[\mathbf P_0 f,g^\eps]\, .$$ 
From Corollary~\ref{cor:estimate_Psieps} and Proposition~\ref{prop:nonlinear} we have
$$
\begin{aligned}
\eps^\beta \| \Psi^\eps [\mathbf P_0^\perp f,g^\eps] \|_{\widetilde L^\infty_I H^\ell_x L^2_v} 
&\lesssim \eps^\beta \| \Gamma_{\mathrm{sym}} [\mathbf P_0^\perp f,g^\eps]  \|_{L^2_I H^\ell_x (H^{s,*}_v)'} \\
&\lesssim \| \mathbf P_0^\perp f \|_{L^2_I H^\ell_x H^{s,*}_v} \eps^\beta \| g^\eps \|_{\widetilde L^\infty_I H^\ell_x L^2_v}\\
&\lesssim \frac{\eps^\beta}{\sqrt{\eps}}  \| \mathbf P_0^\perp f \|_{L^2_I H^\ell_x H^{s,*}_v} \eps^\beta \| g^\eps \|_{\widetilde L^\infty_I H^\ell_x L^2_v}
\end{aligned}
$$
since $\beta<1/2$.
Moreover from Proposition~\ref{prop:Psiepssharp} and using again that $\beta<1/2$, we have
$$
\begin{aligned}
\eps^\beta \| \Psi^{\eps,\sharp} [\mathbf P_0 f,g^\eps] \|_{\widetilde L^\infty_I H^\ell_x L^2_v} 
&\lesssim  \eps^{\beta+1} \| \Gamma_{\mathrm{sym}} [\mathbf P_0 f,g^\eps]  \|_{\widetilde L^\infty_I H^\ell_x L^2_v} \\
&\lesssim \eps^\beta \| \mathbf P_0 f \|_{\widetilde L^\infty_I H^\ell_x L^2_v} \eps^\beta \| g^\eps \|_{\widetilde L^\infty_I H^\ell_x L^2_v}\, .
\end{aligned}
$$
Applying Proposition~\ref{prop:Psiepsflat} and Proposition~\ref{prop:nonlinear_macro}--(2) together with interpolation inequality~\eqref{eq:interpolation}, we get
$$
\begin{aligned}
\eps^\beta \| \Psi^{\eps,\flat} [\mathbf P_0 f,g^\eps] \|_{\widetilde L^\infty_I H^\ell_x L^2_v} 
&\lesssim  \eps^\beta \| \Gamma_{\mathrm{sym}}[\mathbf P_0 f,g^\eps]  \|_{\widetilde L^4_I H^{\ell-\frac12}_x L^2_v} \\
&\lesssim  \eps^\beta \| \mathbf P_0 f \|_{\widetilde L^\infty_I H^\ell_x L^2_v} \| g^\eps \|_{\widetilde L^4_I H^1_x L^2_v}
+ \eps^\beta \| \mathbf P_0 f \|_{\widetilde L^4_I H^1_x L^2_v} \| g^\eps \|_{\widetilde L^\infty_I H^{\ell}_x L^2_v} \\
&\lesssim 
\eps^\beta \| \mathbf P_0 f \|_{\widetilde L^\infty_I H^\ell_x L^2_v} 
\| g^\eps \|_{\widetilde L^4_I H^{1}_x L^2_v} \\
&\quad
+ \| \mathbf P_0 f \|_{\widetilde L^\infty_I H^{\frac12}_x L^2_v}^{\frac12}
\| \mathbf P_0 f \|_{L^2_I H^\frac32_x L^2_v}^{\frac12}
\eps^\beta \| g^\eps \|_{\widetilde L^\infty_I H^\ell_x L^2_v}\,  .
\end{aligned}
$$
Gathering the previous estimates and using the fact that $\beta<1/2$ finally yield
\begin{equation}\label{linear_high}
\begin{aligned}
&\eps^\beta \| \Psi^\eps[f,g^\eps] \|_{\widetilde L^\infty_I H^\ell_x L^2_v} + \frac{\eps^\beta}{\sqrt\eps} \| \mathbf P_0^\perp \Psi^{\eps}[f,g^\eps] \|_{L^2_I H^\ell_x H^{s,*}_v} +\eps^{\beta} \| \mathbf P_0 \Psi^{\eps}[f,g^\eps] \|_{L^2_I H^\ell_x H^{s,*}_v} \\
&\qquad
\lesssim
\| f \|_{\mathcal X^\eps_I}  \Big( \eps^{\beta} \| g^\eps \|_{\widetilde L^\infty_I H^\ell_x L^2_v} 
+ \eps^{\beta}\| g^\eps \|_{L^2_I H^{\ell}_x L^2_v} 
+\| g^\eps \|_{\widetilde L^4_I H^{1}_x L^2_v} \Big) \, .
\end{aligned}
\end{equation}

\subsubsection{Linear estimates: low regularity}

We have from Corollary~\ref{cor:estimate_Psieps} and Proposition~\ref{prop:nonlinear}, as well as from Proposition~\ref{prop:Psiepsflat},
$$
\begin{aligned}
&\| \Psi^\eps[f,g^\eps] \|_{\widetilde L^\infty_I H^{\frac12}_x L^2_v} 
+ \| \Psi^{\eps,\flat} [f,g^\eps] \|_{L^2_I H^\frac32_x H^{s,*}_v} \\
&\quad
\lesssim \| \Gamma_{\mathrm{sym}} [\mathbf P_0^\perp f , g^\eps ]  + \Gamma_{\mathrm{sym}} [\mathbf P_0 f , g^\eps]   \|_{L^2_I H^{\frac12}_x (H^{s,*}_v)'} \\
&\quad
\lesssim \| \mathbf P_0^\perp f \|_{L^2_I H^\frac32_x H^{s,*}_v} \| g^\eps \|_{\widetilde L^\infty_I H^{\frac12}_x L^2_v} 
+ \| \mathbf P_0^\perp f \|_{\widetilde L^\infty_I H^{\frac12}_x L^2_v} \| g^\eps \|_{L^2_I H^\frac32_x L^2_v} \\
&\quad\quad
+\| \mathbf P_0^\perp f \|_{L^2_I H^{\frac32-0}_x H^{s,*}_v} \| g^\eps \|_{\widetilde L^\infty_I H^{\frac12+0}_x L^2_v}   \\
&\quad\quad
+\| \mathbf P_0 f \|_{\widetilde L^4_I H^{1}_x H^{s,*}_v} \| g^\eps \|_{\widetilde L^4_I H^{1}_x L^2_v} 
+ \| \mathbf P_0 f \|_{\widetilde L^\infty_I H^{\frac12}_x L^2_v} \| g^\eps \|_{L^2_I H^\frac32_x L^2_v} \, ,
\end{aligned}
$$
where we have used that that $g^\eps = \P_0 g^\eps$ and $H^{s,*}_v \subset L^2_v$. Therefore we obtain, using \eqref{eq:interpolation},
$$
\begin{aligned}
&\| \Psi^\eps[f,g^\eps] \|_{\widetilde L^\infty_I H^{\frac12}_x L^2_v} +\| \Psi^{\eps,\flat} [f,g^\eps] \|_{L^2_I H^\frac32_x H^{s,*}_v}\\
&\quad
\lesssim \|  f \|_{\widetilde L^\infty_I H^{\frac12}_x L^2_v} \| g^\eps \|_{L^2_I H^\frac32_x L^2_v}
+ \frac{1}{\sqrt\eps} \| \mathbf P_0^\perp f \|_{L^2_I H^\frac32_x H^{s,*}_v} \sqrt\eps \| g^\eps \|_{\widetilde L^\infty_I H^{\frac12+0}_x L^2_v} \\
&\quad\quad
+ \| \mathbf P_0 f \|_{\widetilde L^\infty_I H^{\frac12}_x L^2_v}^{\frac12} \| \mathbf P_0 f \|_{L^2_I H^\frac32_x H^{s,*}_v}^{\frac12} \| g^\eps \|_{\widetilde L^4_I H^{1}_x L^2_v}\, .
\end{aligned}
$$
Moreover, still from Corollary~\ref{cor:estimate_Psieps} and Proposition~\ref{prop:nonlinear},
$$
\begin{aligned}
&\| \mathbf P_0^\perp \Psi^{\eps}[f,g^\eps] \|_{L^2_I H^\frac32_x H^{s,*}_v}
+ \| \Psi^{\eps,\sharp}[f,g^\eps] \|_{L^2_I H^\frac32_x H^{s,*}_v} \\
&\quad
\lesssim \eps \| \Gamma_{\mathrm{sym}} [\mathbf P_0^\perp f , g^\eps ]  +  \Gamma_{\mathrm{sym}} [\mathbf P_0 f , g^\eps ]  \|_{L^2_I H^\frac32_x (H^{s,*}_v)'} \\
&\quad
\lesssim \eps \| \mathbf P_0^\perp f \|_{L^2_I H^\frac32_x H^{s,*}_v} \| g^\eps \|_{\widetilde L^\infty_I H^\ell_x L^2_v} 
+ \eps \| \mathbf P_0 f \|_{L^2_I H^\frac32_x H^{s,*}_v} \| g^\eps \|_{\widetilde L^\infty_I H^\ell_x L^2_v}\,   .
\end{aligned}
$$
This implies
$$
\begin{aligned}
\frac{1}{\sqrt \eps} \| \mathbf P_0^\perp \Psi^{\eps}[f,g^\eps] \|_{L^2_I H^\frac32_x H^{s,*}_v}
&\lesssim  \frac{1}{\sqrt \eps} \| \mathbf P_0^\perp f \|_{L^2_I H^\frac32_x H^{s,*}_v} \eps \| g^\eps \|_{\widetilde L^\infty_I H^\ell_x L^2_v} \\
&\quad
+ \| \mathbf P_0 f \|_{L^2_I H^\frac32_x H^{s,*}_v} \sqrt \eps\| g^\eps \|_{\widetilde L^\infty_I H^\ell_x L^2_v} \,  ,
\end{aligned}
$$
and also
$$
\begin{aligned}
\| \mathbf P_0 \Psi^{\eps,\sharp}[f,g^\eps] \|_{L^2_I H^\frac32_x H^{s,*}_v} 
&\lesssim  \frac{1}{\sqrt\eps}\| \mathbf P_0^\perp f \|_{L^2_I H^\frac32_x H^{s,*}_v}  \eps^{\frac32}\| g^\eps \|_{\widetilde L^\infty_I H^\ell_x L^2_v} \\
&\quad
+  \| \mathbf P_0 f \|_{L^2_I H^\frac32_x H^{s,*}_v} \eps \| g^\eps \|_{\widetilde L^\infty_I H^\ell_x L^2_v} \,  .
\end{aligned}
$$
Putting together the previous estimates provides
\begin{equation}\label{linear_low}
\begin{aligned}
&\| \Psi^\eps[f,g^\eps] \|_{\widetilde L^\infty_I H^{\frac12}_x L^2_v} 
+ \frac{1}{\sqrt\eps} \| \mathbf P_0^\perp \Psi^{\eps}[f,g^\eps] \|_{L^2_I H^\frac32_x H^{s,*}_v} 
+ \| \mathbf P_0 \Psi^{\eps}[f,g^\eps] \|_{L^2_I H^\frac32_x H^{s,*}_v} \\
&\quad
\lesssim
\|f\|_{\mathcal X^\eps_I}
\times \left( \sqrt \eps  \| g^\eps \|_{\widetilde L^\infty_I H^\ell_x L^2_v}  +\| g^\eps \|_{\widetilde L^4_I H^{1}_x L^2_v}
+ \| g^\eps \|_{L^2_I H^\frac32_x L^2_v} \right) \, .
\end{aligned}
\end{equation}

\subsubsection{Conclusion}

Gathering estimates~\eqref{linear_high} and~\eqref{linear_low} concludes the proof  of Proposition~\ref{estimates on delta eps}--(4) since $\beta<1/2$. 

 \subsection{Estimates on the nonlinear term~$ \Psi^{\eps}[\cdot,\cdot]$}\label{sct:nonlinear}
In this paragraph we prove Proposition~\ref{estimates on delta eps}--(5). 
 Consider $f_1$ and $f_2 \in \mathcal X_I^\eps$. 
\subsubsection{Nonlinear estimates: high regularity}

Corollary~\ref{cor:estimate_Psieps} and Proposition~\ref{prop:nonlinear} imply, using that $H^{s,*}_v \subset L^2_v$,
$$
\begin{aligned}
& \frac{1}{\eps} \| \mathbf P_0^\perp \Psi^{\eps}[f_1,f_2] \|_{L^2_I H^\ell_x H^{s,*}_v}
+ \frac{1}{\eps} \| \Psi^{\eps,\sharp}[f_1,f_2] \|_{L^2_I H^\ell_x H^{s,*}_v}\\
&\qquad
\lesssim \| \Gamma_{\mathrm{sym}}[f_1,f_2] \|_{L^2_I H^\ell_x (H^{s,*}_v)'} 
\\
&\qquad\lesssim
\| f_1 \|_{\widetilde L^\infty_I H^\ell_x L^2_v}\| f_2 \|_{L^2_I H^\ell_x H^{s,*}_v}
+ \| f_2 \|_{L^2_I H^\ell_x H^{s,*}_v}\| f_2 \|_{\widetilde L^\infty_I H^\ell_x L^2_v}\, .
\end{aligned}
$$
Moreover from Proposition~\ref{prop:Psiepsflat} and Proposition~\ref{prop:nonlinear}, we get
$$
\begin{aligned}
\| \Psi^{\eps,\flat} [f_1,f_2] \|_{L^2_I H^{\ell}_x H^{s,*}_v} 
& \lesssim \| \Gamma_{\mathrm{sym}}[f_1, f_2]   \|_{L^2_I H^{\ell-1}_x (H^{s,*}_v)'} \\
&\lesssim 
\| f_1 \|_{\widetilde L^\infty_I H^\ell_x L^2_v} \| f_2 \|_{L^2_I H^\frac32_x H^{s,*}_v}
+ \| f_1 \|_{\widetilde L^\infty_I H^{\frac12}_x L^2_v} \| f_2 \|_{L^2_I H^{\ell}_x H^{s,*}_v} \\
&\quad
+\| f_1 \|_{L^2_I H^\frac32_x H^{s,*}_v} \| f_2 \|_{\widetilde L^\infty_I H^\ell_x L^2_v}
+ \| f_1 \|_{L^2_I H^{\ell}_x H^{s,*}_v} \| f_2 \|_{\widetilde L^\infty_I H^{\frac12}_x L^2_v} \, .
\end{aligned}
$$
We now turn to the $L^\infty_I$ norm and we decompose 
$$
\begin{aligned}
\Psi^{\eps}[f_1,f_2] &= \Psi^{\eps}[\mathbf P_0^\perp f_1 , \mathbf P_0^\perp f_2]
+ \Psi^\eps[ \mathbf P_0^\perp f_1 , \mathbf P_0 f_2 ]
+ \Psi^\eps[ \mathbf P_0 f_1 , \mathbf P_0^\perp f_2 ]
\\
&\quad+ \Psi^{\eps,\sharp} [\mathbf P_0 f_1 , \mathbf P_0 f_2]
+ \Psi^{\eps,\flat} [\mathbf P_0 f_1 , \mathbf P_0 f_2]\, .
\end{aligned}
$$
Thanks to Corollary~\ref{cor:estimate_Psieps} and Proposition~\ref{prop:nonlinear} we obtain
$$
\begin{aligned}
&\| \Psi^{\eps}[ \mathbf P_0^\perp f_1, \mathbf P_0^\perp f_2] \|_{\widetilde L^\infty_I H^\ell_x L^2_v}
\lesssim \| \Gamma_{\mathrm{sym}}[ \mathbf P_0^\perp f_1,  \mathbf P_0^\perp f_2]   \|_{L^2_I H^\ell_x (H^{s,*}_v)'} 
\\
&\qquad 
\lesssim
\| \mathbf P_0^\perp f_1 \|_{\widetilde L^\infty_I H^\ell_x L^2_v}\| \mathbf P_0^\perp f_2 \|_{L^2_I H^\ell_x H^{s,*}_v}
+ \| \mathbf P_0^\perp f_1 \|_{L^2_I H^\ell_x H^{s,*}_v}\| \mathbf P_0^\perp f_2 \|_{\widetilde L^\infty_I H^\ell_x L^2_v}\, ,
\end{aligned}
$$
moreover
$$
\begin{aligned}
\| \Psi^{\eps}[ \mathbf P_0^\perp f_1, \mathbf P_0 f_2] \|_{\widetilde L^\infty_I H^\ell_x L^2_v}
&\lesssim \| \Gamma_{\mathrm{sym}}  [ \mathbf P_0^\perp f_1, \mathbf P_0 f_2 ] \|_{L^2_I H^\ell_x (H^{s,*}_v)'} 
\\
&\lesssim
\| \mathbf P_0^\perp f_1 \|_{L^2_I H^\ell_x H^{s,*}_v}\| \mathbf P_0 f_2 \|_{\widetilde L^\infty_I H^\ell_x L^2_v}\, ,
\end{aligned}
$$
and also
$$
\begin{aligned}
\| \Psi^{\eps}[ \mathbf P_0 f_1, \mathbf P_0^\perp f_2] \|_{\widetilde L^\infty_I H^\ell_x L^2_v}
&\lesssim \| \Gamma_{\mathrm{sym}} [\mathbf P_0 f_1, \mathbf P_0^\perp f_2 ]\|_{L^2_I H^\ell_x (H^{s,*}_v)'} 
\\
&\lesssim
\| \mathbf P_0 f_1 \|_{\widetilde L^\infty_I H^\ell_x L^2_v}\| \mathbf P_0^\perp f_2 \|_{L^2_I H^\ell_x H^{s,*}_v}\, .
\end{aligned}
$$
Moreover from Proposition~\ref{prop:Psiepssharp} we have
$$
\begin{aligned}
\| \Psi^{\eps,\sharp} [\mathbf P_0 f_1, \mathbf P_0 f_2] \|_{\widetilde L^\infty_I H^\ell_x L^2_v} 
&\lesssim  \eps \| \Gamma_{\mathrm{sym}}[\mathbf P_0 f_1, \mathbf P_0 f_2]   \|_{L^\infty_I H^\ell_x L^2_v} \\
&\lesssim \eps \| \mathbf P_0 f_1 \|_{\widetilde L^\infty_I H^\ell_x L^2_v} \| \mathbf P_0 f_2 \|_{\widetilde L^\infty_I H^\ell_x L^2_v}\, .
\end{aligned}
$$
Applying Proposition~\ref{prop:Psiepsflat} and Proposition~\ref{prop:nonlinear_macro}--(1) we get
$$
\begin{aligned}
&\| \Psi^{\eps,\flat} [\mathbf P_0 f_1, \mathbf P_0 f_2] \|_{\widetilde L^\infty_I H^\ell_x L^2_v} 
\lesssim   \| \Gamma_{\mathrm{sym}}[\mathbf P_0 f_1,\mathbf P_0 f_2]   \|_{\widetilde L^\infty_I H^{\ell-1}_x L^2_v} \\
&\qquad
\lesssim  \| \mathbf P_0 f_1 \|_{\widetilde L^\infty_I H^\ell_x L^2_v} \| \mathbf P_0 f_2 \|_{\widetilde L^\infty_I H^{\frac12}_x L^2_v}
+ \| \mathbf P_0 f_1 \|_{\widetilde L^\infty_I H^{\frac12}_x L^2_v} \| \mathbf P_0 f_2 \|_{\widetilde L^\infty_I H^{\ell}_x L^2_v} \,  .
\end{aligned}
$$
Gathering the previous estimates, we   finally deduce
\begin{equation}\label{nonlinear_high}
\begin{aligned}
&\eps^\beta \| \Psi^\eps[f_1,f_2] \|_{\widetilde L^\infty_I H^\ell_x L^2_v} 
+ \frac{\eps^\beta}{\sqrt\eps} \| \mathbf P_0^\perp \Psi^{\eps}[f_1,f_2] \|_{L^2_I H^\ell_x H^{s,*}_v}
+ \eps^{\beta}\| \mathbf P_0 \Psi^{\eps}[f_1,f_2] \|_{L^2_I H^\ell_x H^{s,*}_v} \\
&\qquad
\lesssim \| f_1 \|_{\mathcal X^\eps_I} \| f_2 \|_{\mathcal X^\eps_I}\, .
\end{aligned}
\end{equation}

\subsubsection{Nonlinear estimates: low regularity}

We first compute thanks to Corollary~\ref{cor:estimate_Psieps}
$$
\begin{aligned}
\| \Psi^\eps[f_1,f_2] \|_{\widetilde L^\infty_I H^{\frac12}_x L^2_v} 
&\lesssim \| \Gamma[f_1, \mathbf P_0^\perp f_2] + \Gamma[f_1, \mathbf P_0 f_2]  + \Gamma[f_2, \mathbf P_0^\perp f_1] + \Gamma[f_2, \mathbf P_0 f_1] \|_{L^2_I H^{\frac12}_x (H^{s,*}_v)'} 
\end{aligned}
$$
and
$$
\begin{aligned}
&\frac{1}{\eps} \| \mathbf P_0^\perp \Psi^\eps[f_1,f_2] \|_{L^2_I H^\frac32_x H^{s,*}_v}
+\frac{1}{\eps} \| \Psi^{\eps,\sharp}[f_1,f_2] \|_{L^2_I H^\frac32_x H^{s,*}_v}\\
&\quad
\lesssim \| \Gamma[f_1, \mathbf P_0^\perp f_2] + \Gamma[f_1, \mathbf P_0 f_2]+\Gamma[f_2, \mathbf P_0^\perp f_1] + \Gamma[f_2, \mathbf P_0 f_1] \|_{L^2_I H^\frac32_x (H^{s,*}_v)'}\,  .
\end{aligned}
$$
From Proposition~\ref{prop:nonlinear} we get
\begin{equation} \label{eq:L2TH12}
\begin{aligned}
&\| \Gamma[f_1, \mathbf P_0^\perp f_2] + \Gamma[f_1, \mathbf P_0 f_2] \|_{L^2_I H^{\frac12}_x (H^{s,*}_v)'} \\
&
\lesssim \| f_1 \|_{\widetilde L^\infty_I H^{\frac12+0}_x L^2_v}\| \mathbf P_0^\perp f_2 \|_{L^2_I H^{\frac32-0}_x H^{s,*}_v} 
 \\
&\quad
+ \| f_1 \|_{L^2_I H^\frac32_x L^2_v} \| \mathbf P_0 f_2 \|_{\widetilde L^\infty_I H^{\frac12}_x H^{s,*}_v}
+ \| f_1 \|_{\widetilde L^\infty_I H^{\frac12}_x L^2_v} \| \mathbf P_0 f_2 \|_{L^2_I H^\frac32_x H^{s,*}_v} \\
&\lesssim \eps^{\frac12-\beta} \left( \eps^\beta \| f_1 \|_{\widetilde L^\infty_I H^\ell_x L^2_v} \right) \left(\frac{1}{\sqrt \eps} \| \mathbf P_0^\perp f_2 \|_{L^2_I H^\frac32_x H^{s,*}_v} \right)\\
&\quad
+ \| f_1 \|_{L^2_I H^\frac32_x L^2_v} \| \mathbf P_0 f_2 \|_{\widetilde L^\infty_I H^{\frac12}_x H^{s,*}_v} + \| f_1 \|_{\widetilde L^\infty_I H^{\frac12}_x L^2_v} \| \mathbf P_0 f_2 \|_{L^2_I H^\frac32_x H^{s,*}_v}\,  .
\end{aligned}
\end{equation}
Moreover we also obtain
$$
\begin{aligned}
&\| \Gamma[f_1, \mathbf P_0^\perp f_2] + \Gamma[f_1, \mathbf P_0 f_2]\|_{L^2_I H^\frac32_x (H^{s,*}_v)'} \\
&\quad
\lesssim \| f_1 \|_{\widetilde L^\infty_I H^{\frac32+0}_x L^2_v} \| \mathbf P_0^\perp f_2 \|_{L^2_I H^{\frac32-0}_x H^{s,*}_v}
+ \| f_1 \|_{\widetilde L^\infty_I H^{\ell}_x L^2_v} \| \mathbf P_0^\perp f_2 \|_{L^2_I H^\frac32_x H^{s,*}_v} \\
&\quad\quad
+ \| f_1 \|_{L^2_I H^\frac32_x L^2_v} \| \mathbf P_0 f_2 \|_{\widetilde L^\infty_I H^{\ell}_x L^2_v}
+ \| f_1 \|_{L^2_I H^{\frac32-0}_x L^2_v} \| \mathbf P_0 f_2 \|_{\widetilde L^\infty_I H^{\ell}_x L^2_v}\,  ,
\end{aligned}
$$
from which we deduce
$$
\begin{aligned}
&\sqrt\eps \| \Gamma[f_1, \mathbf P_0^\perp f_2] + \Gamma[f_1, \mathbf P_0 f_2]\|_{L^2_I H^\frac32_x (H^{s,*}_v)'}\\
&\quad 
\lesssim \eps^{1-\beta} \left( \eps^\beta \| f_1 \|_{\widetilde L^\infty_I H^{\ell}_x L^2_v} \right)  \left( \frac{1}{\sqrt \eps}\| \mathbf P_0^\perp f_2 \|_{L^2_I H^\frac32_x H^{s,*}_v} \right)\\
&\quad\quad
+ \eps^{\frac12-\beta} \| f_1 \|_{L^2_I H^\frac32_x L^2_v} \left( \eps^\beta \| \mathbf P_0 f_2 \|_{\widetilde L^\infty_I H^{\ell}_x L^2_v} \right)\, .
\end{aligned}
$$
Furthermore Proposition~\ref{prop:Psiepsflat} yields
$$
\begin{aligned}
\| \Psi^{\eps,\flat}[f_1,f_2] \|_{L^2_I H^\frac32_x H^{s,*}_v}
&\lesssim \| \Gamma[f_1, \mathbf P_0^\perp f_2] + \Gamma[f_1, \mathbf P_0 f_2] \|_{L^2_I H^{\frac12}_x (H^{s,*}_v)'}\\
&\quad +\|\Gamma[f_2, \mathbf P_0^\perp f_1] + \Gamma[f_2, \mathbf P_0 f_1] \|_{L^2_I H^{\frac12}_x (H^{s,*}_v)'}\,  .
\end{aligned}
$$
We can then use~\eqref{eq:L2TH12} to bound $\Gamma[f_1, \mathbf P_0^\perp f_2] + \Gamma[f_1, \mathbf P_0 f_2]$ in $L^2_I H^{\frac12}_x (H^{s,*}_v)'$. 
Gathering the previous estimates, and observing that the terms~$\Gamma[f_2, \mathbf P_0^\perp f_1]$ and~$\Gamma[f_2, \mathbf P_0 f_1]$ can be handled in a similar way, we thus deduce
\begin{equation}\label{nonlinear_low}
\begin{aligned}
& \| \Psi^\eps[f_1,f_2] \|_{\widetilde L^\infty_I H^{\frac12}_x L^2_v} 
+ \frac{1}{\sqrt\eps} \| \mathbf P_0^\perp \Psi^{\eps}[f_1,f_2] \|_{L^2_I H^\frac32_x H^{s,*}_v}
+ \| \mathbf P_0 \Psi^{\eps}[f_1,f_2] \|_{L^2_I H^\frac32_x H^{s,*}_v} \\
&\quad
\lesssim \| f_1 \|_{\mathcal X^\eps_I} \| f_2 \|_{\mathcal X^\eps_I}
\end{aligned}
\end{equation}
since $\beta<1/2$.

\subsubsection{Conclusion}

The bounds obtained in~\eqref{nonlinear_high} and~\eqref{nonlinear_low} yield the continuity estimate given in Proposition~\ref{estimates on delta eps}--(5).

 \appendix
 \section{Hypocoercivity}\label{app:hypocoercivity}

It is well-known, see for instance \cite{MR1463805,MR1946444,MR2322149,MR2231011,MR2254617}, that the linearized Boltzmann and Landau collision operators satisfy the following coercive-type inequality
\begin{equation}
\la L f , f \ra_{L^2_v} \le - \lambda_2 \|  \P_0^\perp f \|_{H^{s,*}_v}^2\, ,
\end{equation}
for some $\lambda_2>0$.

For all $\eps \in (0,1]$ and all $ k \in \Z^3$, we recall that $\widehat \Lambda^\eps ( k)$ is the Fourier transform in space of the full linearized operator~$\displaystyle  \frac{1}{\eps^2} L - \frac{1}{\eps} v \cdot \nabla_x $ , namely 
\begin{equation}\label{eq:def:Lambdaeps_xi}
\widehat\Lambda^\eps ( k) :=  \frac{1}{\eps^2} ( L - i \eps v \cdot  k )\,.
\end{equation}

We now state a hypocoercive result for $\widehat\Lambda^\eps ( k)$ (for a detailed presentation of the subject, we refer to~\cite{BCMT} and the references therein, we also point out the papers~\cite{Strain}  and~\cite{Duan} in the case of the whole space), as presented in \cite{CC,Carrapatoso-Gervais}.
 \begin{prop}\label{prop:hypocoercivity}
 There is an  inner product $\la\!\la \cdot , \cdot \ra\!\ra_{L^2_v}$ on $L^2_v$ (depending on $ k$)  such that the associate norm~$	\Nt \cdot \Nt_{L^2_v}$
 is equivalent to the standard norm $\| \cdot \|_{L^2_v}$ on $L^2_v$ with bounds that are independent of $ k$ and $\eps$, and there exists $\lambda_3>0$ such that for every $f$ satisfiying~\eqref{eq:normalization} and all $ k \in  \Z^3$, there holds
	$$
	\re \la\!\la\widehat \Lambda^\eps ( k) \widehat f ( k) , \widehat f ( k) \ra\!\ra_{L^2_v } \le - \lambda_3 \left( \frac{1}{\eps^2} \|  \P_0^\perp \widehat 	f ( k) \|_{H^{s,*}_v}^2 + \| \P_0 \widehat f ( k) \|_{L^2_v}^2 \right)\,.
	$$
\end{prop}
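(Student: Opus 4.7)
The plan is to follow the hypocoercivity strategy developed by Guo and Mouhot--Neumann, as adapted in~\cite{CC, Carrapatoso-Gervais, BCMT}, namely to construct a modified inner product on $L^2_v$ that, for each fixed $k$, combines the standard $L^2_v$ inner product with small auxiliary terms designed to generate dissipation along the macroscopic modes. Since the transport part $-i\eps v\cdot k/\eps^2$ is skew-adjoint in $L^2_v$, a direct computation only yields
$$
\re \langle \widehat\Lambda^\eps(k)\widehat f, \widehat f\rangle_{L^2_v} = \frac{1}{\eps^2}\langle L\widehat f, \widehat f\rangle_{L^2_v} \leq -\frac{\lambda_2}{\eps^2}\|\mathbf P_0^\perp \widehat f\|_{H^{s,*}_v}^2,
$$
which controls only the microscopic part. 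The correction terms must therefore unlock the coercivity on $\mathbf P_0\widehat f = (\rho + u\cdot v + \theta(|v|^2-3)/2)\mu^{1/2}$ via the commutator structure of $L$ and $iv\cdot k$.

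First I would introduce, for each unit vector $\omega = k/|k|$, auxiliary bounded operators $\mathcal A_\rho(\omega), \mathcal A_u(\omega), \mathcal A_\theta(\omega)$ acting on $L^2_v$, whose role is to select one specific macroscopic moment. A typical choice is $\mathcal A_u(\omega) \widehat f := (\omega\cdot u[\widehat f])\,\varphi(v)$ for suitable $\varphi$ in $(\mathrm{Ker}\,L)^\perp$ chosen so that $\langle iv\cdot\omega\,\mathcal A_u \widehat f, \mathbf P_0\widehat f\rangle$ reproduces $|u[\widehat f]|^2$ up to lower order, and similarly for $\rho, \theta$, where one must exploit the tension between the divergence-free projection and the Boussinesq relation. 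Then I would set
$$
\Nt \widehat f \Nt_{L^2_v}^2 := \|\widehat f\|_{L^2_v}^2 + \sum_{\star \in \{\rho,u,\theta\}} \eta_\star \,\eps\, \phi(\eps|k|)\, \re \langle i\mathcal A_\star(\omega) \widehat f, \widehat f\rangle_{L^2_v},
$$
where $\phi$ is a smooth bounded function equivalent to $\min(|k|,1/|k|)$ and $\eta_\star > 0$ are small fixed parameters; the weight $\phi$ enforces uniformity in $k$ both at low and high frequencies, while the factor $\eps$ ensures norm equivalence uniformly in $\eps$ for $\eta_\star$ sufficiently small.

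Next I would compute the time derivative along $\widehat\Lambda^\eps(k)$. Each cross term $\re\langle i\mathcal A_\star \widehat f, \widehat\Lambda^\eps(k)\widehat f\rangle$ contributes three pieces: a good term proportional to $\phi(\eps|k|)|k|/\eps \cdot \|\text{macro mode}\|^2$ coming from $(-iv\cdot k/\eps)\mathbf P_0$ interacting with $\mathcal A_\star$, a coupling with $\mathbf P_0^\perp\widehat f$ that is absorbed by a fraction of the microscopic dissipation via Young's inequality, and a term involving $L\mathcal A_\star \widehat f$ which is also absorbable. After summing over $\star$ and using the classical fact (see e.g.~\cite{Briant-BNS, BMM, BCMT}) that the quadratic form in $(\rho[\widehat f], u[\widehat f], \theta[\widehat f])$ generated this way is coercive modulo the constraint~\eqref{eq:normalization} which eliminates the $k=0$ kernel, I obtain dissipation of the form $c \phi(\eps|k|)|k|/\eps \cdot \|\mathbf P_0\widehat f\|_{L^2_v}^2$; combined with a prefactor $\eta_\star\eps$, this yields a contribution $\gtrsim \phi(\eps|k|)^2 \|\mathbf P_0\widehat f\|_{L^2_v}^2$, and by construction of $\phi$ this in turn dominates $\|\mathbf P_0\widehat f\|_{L^2_v}^2$ uniformly in $k\neq 0$.

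The main obstacle I anticipate is the uniformity in both $\eps$ and $k$: one must rule out degeneration at high frequencies $|k|\gg 1/\eps$ (where the transport part dominates) and at low frequencies $|k|\ll 1$ (where the modes~\eqref{wave eigenvalues} almost decouple). The weight $\phi(\eps|k|)$ is what reconciles these regimes, and its correct calibration together with the smallness of the $\eta_\star$ to preserve norm equivalence is the delicate bookkeeping step. Once this is done, one absorbs all cross contributions into the dominant dissipations $\frac{\lambda_2}{\eps^2}\|\mathbf P_0^\perp\widehat f\|_{H^{s,*}_v}^2$ and the just-obtained $\|\mathbf P_0\widehat f\|_{L^2_v}^2$ term, which gives the announced bound with some $\lambda_3>0$.
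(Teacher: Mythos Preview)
Your overall strategy is exactly the one the paper follows: add to the $L^2_v$ inner product a small moment-based correction, with a prefactor $\eps$, so that the transport operator generates macroscopic dissipation. The paper's correction $\psi$ is built from cross terms between the macroscopic moments $(\rho,u,\theta)$ and higher moments $M,\Theta$ of $\P_0^\perp \widehat f$, with hierarchically ordered small constants $\delta_3\ll\delta_2\ll\delta_1$, and then one argues as in~\cite{Strain}.

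However, there is a concrete error in your choice of weight. You take $\phi(\eps|k|)$ with $\phi(\xi)\sim\min(\xi,1/\xi)$, and then assert that the resulting macroscopic dissipation $\gtrsim\phi(\eps|k|)^2\|\P_0\widehat f\|_{L^2_v}^2$ ``in turn dominates $\|\P_0\widehat f\|_{L^2_v}^2$ uniformly in $k\neq 0$''. This is false: for fixed $k\in\Z^3\setminus\{0\}$ and $\eps\to 0$ one has $\phi(\eps|k|)\sim\eps|k|\to 0$, so $\phi(\eps|k|)^2$ is not bounded below uniformly in $\eps$. Tracing the arithmetic, your norm prefactor $\eta_\star\eps\,\phi(\eps|k|)$ combined with the $|k|/\eps$ from transport yields macroscopic dissipation $\sim\phi(\eps|k|)|k|$, which on the torus is $\sim\eps|k|^2$ for small $\eps$ and hence degenerates. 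Your worry about the regime $|k|\gg 1/\eps$ is a red herring: the issue is uniformity as $\eps\to 0$ at \emph{fixed} frequency.

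The fix, which is what the paper does, is to take the frequency weight independent of $\eps$: the correction is $\eps\,\psi$ where $\psi$ carries factors $k/\la k\ra^2$ (so effectively a weight $|k|/\la k\ra^2$). The $\eps$ in front guarantees norm equivalence uniformly since $\eps\cdot|k|/\la k\ra^2\lesssim\eps$; and when one computes $\re\la\!\la\widehat\Lambda^\eps\widehat f,\widehat f\ra\!\ra$, the transport contribution to the correction is
\[
\eps\cdot\frac{|k|}{\la k\ra^2}\cdot\frac{|k|}{\eps}=\frac{|k|^2}{\la k\ra^2}\,,
\]
which is uniformly bounded below for $k\in\Z^3\setminus\{0\}$ (the zero mode being killed by~\eqref{eq:normalization}). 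This is precisely the cancellation that makes the argument $\eps$-uniform, and it requires the weight to live in the variable $|k|$, not $\eps|k|$.
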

\begin{proof}
For every $ k \in \Z^3$, we define
	\begin{equation*}
	\begin{aligned}
		\psi[ f_1 ,  f_2 ] ( k)
		&:=  \frac{\delta_1 i  }{\la  k\ra^2}   k \theta[\widehat f_1( k)]  \cdot M[  \P_0^\perp  \widehat f_2( k)]  
		+  \frac{\delta_1 i }{\la  k\ra^2}    k \theta[\widehat f_2( k)] \cdot M[  \P_0^\perp  \widehat f_1 ( k)] \\
		&\quad 
		+  \frac{\delta_2i}{\la  k\ra^2} 
		( k \otimes u[\widehat f_1( k)] )^{\mathrm{sym}} : \left\{ \Theta[ \P_0^\perp  \widehat f_2( k)] + \theta[\widehat g( k)] \operatorname{Id} \right\}\\
		&\quad 
		+  \frac{\delta_2i}{\la  k\ra^2} 
		( k \otimes u[\widehat f_2( k)] )^{\mathrm{sym}} : \left\{\Theta[ \P_0^\perp  \widehat f_1( k)] + \theta[\widehat f( k)] \operatorname{Id} \right\}\\
		&\quad 
		+  \frac{\delta_3i }{\la  k\ra^2}  k \rho[\widehat f_1( k)] \cdot  u[\widehat f_2( k)] 
		+  \frac{\delta_3i }{\la  k\ra^2}  k \rho[\widehat f_2( k)] \cdot  u[\widehat f_1( k)] \,,
	\end{aligned}
	\end{equation*}
with constants $0 < \delta_3 \ll \delta_2 \ll \delta_1 \ll 1$, where $\operatorname{Id}$ is the $3 \times 3$ identity matrix and the moments $M$ and $\Theta$ are defined by
	$$
	M [f] := \int_{\R^3} f v \,(|v|^2-5) \mu^{\frac12}(v) \, \d v\,, \qquad 
	\Theta[f] := \int_{\R^3} f \left(v \otimes v - \operatorname{Id}\right) \mu^{\frac12}(v) \, \d v\,,
	$$
and where for vectors $a, b \in \R^3$ and matrices $A, B \in \R^{3 \times 3}$, we denote
	$$
	(a \otimes b)^{\mathrm{sym}} = \frac{1}{2} (a_j b_k + a_kb_j)_{1 \le j,k \le 3}\,, \qquad 
	A : B = \sum_{j,k=1}^3 A_{jk} B_{jk}\,.
	$$
We then define the inner product $\la\!\la \cdot , \cdot \ra\!\ra_{L^2_v}$ on $L^2_v$ (depending on $ k$) by
	\begin{equation}\label{eq:new_inner_product}
	\la\!\la \widehat f_1( k), \widehat f_2 ( k) \ra\!\ra_{L^2_v} 
	:=  \la \widehat f_1( k), \widehat f_2 ( k) \ra_{L^2_v} + \eps \, \psi[f_1, f_2]( k) \,,
	\end{equation}
and the associated norm
	\begin{equation}\label{eq:new_norm}
	\Nt \widehat f( k) \Nt_{L^2_v}^2 :=  \la\!\la \widehat f( k), \widehat f ( k) \ra\!\ra_{L^2_v} \,.
	\end{equation}
We then argue  as in \cite{Strain}, the only difference being the factor $\eps$ in the second term of~\eqref{eq:new_inner_product}.
\end{proof}

Using this hypocoercivity result, we are able to prove Proposition \ref{prop:estimate_Ueps}.

\begin{proof}[Proof of Proposition~{\rm\ref{prop:estimate_Ueps}}] $ $

$\bullet$ $ $ (1) Let $f(t) := U^\eps(t) f_{\rm in}$ for all $t \ge 0$, which satisfies the equation
	\begin{equation}\label{eq:Ueps_f0}
	\partial_t f = \frac{1}{\eps^2} (L - \eps v \cdot \nabla_x) f\,, \quad 
	f_{| t=0} = f_{\rm in}\,.
	\end{equation}
We already observe that $f(t)$ verifies \eqref{eq:normalization} thanks to the conservation properties of $\Gamma$ (and hence of $L$).
Taking the Fourier transform in space of the above equation, we obtain that~$\widehat f$ satisfies
	\begin{equation}\label{eq:hatUeps_hatf0}
	\partial_t \widehat f( k) =\widehat  \Lambda^\eps ( k) \widehat f( k)\,, \quad 
	\widehat f( k)_{|t=0} = \widehat f_{\rm in}( k)\,,
	\end{equation}
for all $ k \in \Z^3$.
Applying Proposition~\ref{prop:hypocoercivity} yields, for all $t \ge 0$,
	\begin{align*}	
	\frac{1}{2} \frac{\d}{\d t} \Nt \widehat f( k) \Nt_{L^2_v }^2
	&= \re \la\!\la \widehat\Lambda^\eps ( k) \widehat f ( k) , \widehat f ( k) \ra\!\ra_{L^2_v } \\
	&\le - \lambda_3 \left( \frac{1}{\eps^2} \| \P_0^\perp \widehat f ( k) \|_{H^{s,*}_v}^2 
	+ \| \P_0 \widehat f ( k) \|_{L^2_v}^2 \right)\,,
	\end{align*}
which implies	
	\begin{align*}
	\| \widehat f(t, k) \|_{L^2_v}^2
	+\frac{1}{\eps^2}\int_0^t \|  \P_0^\perp \widehat f(t', k) \|_{H^{s,*}_v}^2 \, \d t'
	+\int_0^t   \| \P_0 \widehat f(t', k) \|_{L^2_v}^2 \, \d t' 
	\lesssim \| \widehat f_{\rm in} ( k)\|_{L^2_v}^2\,,
	\end{align*}
where we have used that $\Nt \cdot \Nt_{L^2_v}$ is equivalent to $\| \cdot \|_{L^2_v}$ independently of $ k$ and $\eps$.
Taking the supremum in time and then multipliyng by $\la  k \ra^{2m}$ yields
	\begin{equation*}
	\la  k \ra^{2m} \| \widehat f( k) \|_{L^\infty_t L^2_v}^2
	+\frac{\la  k \ra^{2m}}{\eps}\|  \P_0^\perp \widehat f( k) \|_{L^2_t H^{s,*}_v}^2 
	+ \la  k \ra^{2m}  \| \P_0 \widehat f( k) \|_{L^2_t L^2_v}^2	
	\lesssim \la  k \ra^{2m}\| \widehat f_{\rm in} ( k) \|_{L^2_v}^2\,.
	\end{equation*}
We conclude by summing in $ k$.

\medskip

$\bullet$ $ $ (2) Denote 
	$$
	h(t) := \int_0^t U^\eps(t-t') S(t') \, \d t'
	$$
which is the solution to
	\begin{equation}\label{eq:gS}
	\partial_t  h = \frac{1}{\eps^2} ( L - \eps v \cdot \nabla_x )h +  S\,, \quad  h_{|t=0} = 0\,.
	\end{equation}
Taking the Fourier transform in space gives
	\begin{equation}\label{eq:hatgS}
	\partial_t \widehat h( k) =\widehat \Lambda^\eps ( k) \widehat h( k) + \widehat S( k), \quad \widehat h( k)_{|t=0} = 0\,,
	\end{equation}
for all $ k \in \Z^3$.
From the definition of \eqref{eq:new_inner_product} and the hypothesis $\P_0 S = 0$, we observe that
	\begin{align*}
	\la\!\la \widehat S ( k) ,  \widehat h ( k) \ra\!\ra_{L^2_v}
	&= \la \widehat S ( k) ,   \widehat h ( k) \ra_{L^2_v} 
	+ \eps  \Psi [S , h ]( k) \\
	&= \la \widehat S ( k) ,   \P_0^\perp \widehat h ( k) \ra_{L^2_v} 
	+ \eps  \frac{\delta_1 i }{1+| k|^2}    k \theta[\widehat h( k)] \cdot M[  \P_0^\perp \widehat S ( k)] \\
	&\quad
	+  \eps \frac{\delta_2i}{1+| k|^2} ( k \otimes u[\widehat h( k)] )^{\mathrm{sym}} : \Theta[ \P_0^\perp \widehat S( k)] \, .
	\end{align*}
Observing that for any polynomial $p=p(v)$ we have
	$$
	\left|\int_{\R^3} \widehat S( k) p(v) \mu^{\frac12}(v) \, \d v  \right|
	\lesssim \| \widehat S( k) \|_{(H^{s,*}_v)'}\,,
	$$
we get
	$$
	|\Psi [S , h ]( k)|
	\lesssim \|   \P_0^\perp \widehat S( k) \|_{(H^{s,*}_v)'} \frac{| k|}{\la  k \ra}\| \P_0 \widehat h( k) \|_{L^2_v}\,.
	$$
By duality, we also have
	$$
	\la \widehat S ( k) ,   \P_0^\perp \widehat h ( k) \ra_{L^2_v} 
	\lesssim \| \widehat S( k) \|_{(H^{s,*}_v)'}  \|  \P_0^\perp \widehat h ( k) \|_{H^{s,*}_v} \,,
	$$
therefore gathering previous estimates yields
	\begin{equation}\label{eq:bound_S}
	\begin{aligned}
	\la\!\la \widehat S ( k) ,  \widehat h ( k) \ra\!\ra_{L^2_v}
	&\lesssim \eps \| \widehat S( k) \|_{(H^{s,*}_v)'} 
	\left( \frac{1}{\eps} \|  \P_0^\perp \widehat h ( k)\|_{H^{s,*}_v} + \| \P_0 \widehat h ( k)\|_{L^2_v} \right)\,.
	\end{aligned}
	\end{equation}
Using Proposition~\ref{prop:hypocoercivity} and arguing as in the proof of Proposition~\ref{prop:estimate_Ueps}--(1) we have, for all~$t \ge 0$ and all~$ k \in \Z^3$,
	\begin{equation}\label{eq:dt_hatgS}
	\begin{aligned}
	\frac{1}{2} \frac{\d}{\d t} \Nt \widehat h( k) \Nt_{L^2_v}^2
	&\le - \lambda_3   \left( \frac{1}{\eps^2} \|  \P_0^\perp \widehat h ( k) \|_{H^{s,*}_v}^2 
	+  \| \P_0 \widehat h ( k) \|_{L^2_v}^2 \right) \\
	&\quad
	+ \eps C \|  \widehat S( k) \|_{(H^{s,*}_v)'} \left( \frac{1}{\eps} \|  \P_0^\perp \widehat h ( k) \|_{H^{s,*}_v} 
	+  \| \P_0 \widehat h ( k) \|_{L^2_v} \right)\\
	&\le - \frac{\lambda_3}{2}  \left( \frac{1}{\eps^2} \|  \P_0^\perp \widehat h ( k) \|_{H^{s,*}_v}^2 
	+  \| \P_0 \widehat h ( k) \|_{L^2_v}^2 \right)  
	+ C \eps^2 \|  \widehat S( k) \|_{(H^{s,*}_v)'}^2 \,,
	\end{aligned}
	\end{equation}
where we have used Young's inequality in last line. This implies
	\begin{multline*}
	\| \widehat h(t, k) \|_{L^2_v}^2
	+\frac{1}{\eps^2}\int_0^t \|  \P_0^\perp \widehat h(t', k) \|_{H^{s,*}_v}^2 \, \d t'
	+\int_0^t  \| \P_0 \widehat h(t', k) \|_{L^2_v}^2 \, \d t' \\
	\lesssim \eps^2 \int_0^t \| \widehat S(t', k) \|_{(H^{s,*}_v)'}^2 \, \d t' \,.
	\end{multline*}
Taking the supremum in time and then multiplying by $\la  k \ra^{2m}$ yields
	\begin{multline*}
	\la  k \ra^{2m} \|  \widehat h( k) \|_{L^\infty_t L^2_v}^2
	+\frac{\la  k \ra^{2m}}{\eps^2}\|   \P_0^\perp \widehat h( k) \|_{L^2_t H^{s,*}_v}^2
	+\la  k \ra^{2m}  \| \P_0 \widehat h( k)\|_{L^2_t L^2_v}^2\\
	\lesssim \eps^2 \la  k \ra^{2m} \|   \widehat S( k) \|_{L^2_t (H^{s,*}_v)'}^2\,,
	\end{multline*}
and we conclude by summing in $ k$.  Proposition~\ref{prop:estimate_Ueps} is proved.
\end{proof}

\bigskip

\end{document}